\RequirePackage[final]
{showkeys}

\documentclass[EJP,preprint]{ejpecp} % replace ECP by EJP if needed.
\usepackage{graphicx}
\usepackage{placeins}

\usepackage%[final]
{showkeys}

\usepackage{enumerate}

\usepackage{xurl}
\SHORTTITLE{Bounds on isotropic log-concave distributions}

\TITLE{Exact bounds on the distribution function of isotropic log-concave distributions}
\AUTHORS{%
  Iosif~Pinelis\footnote{Michigan Technological University, United States of America.
    \EMAIL{ipinelis@mtu.edu}}%\orcid{0000-0003-0986-3622}
}

\KEYWORDS{log-concave distributions; probability inequalities} % Separate items with ;

\AMSSUBJ{60E15, 26D10, 26D15, 26A51} % Edit. Separate items with ;
\SUBMITTED{Month Day, YYYY } % Edit.
\ACCEPTED{} % Edit.

\VOLUME{0}
\YEAR{2026}
\PAPERNUM{0}
\DOI{%10.1214/YY-TN
}

\ABSTRACT{For each real $b$, exact upper and lower bounds on the probability $\P(X\ge b)$ over all random variables $X$ with log-concave p.d.f.'s such that $\E X=0$ and $\E X^2=1$ are obtained, as well as the best constant factor $C$ in the inequality $\P(X\ge b)\le C e^{-b}$ for all real $b\ge0$. Exact upper and lower bounds on the corresponding exponential moments (Proposition~\ref{prop:exp}) and explicit exponentially decreasing upper bounds on the mentioned p.d.f.'s (Propositions~\ref{prop:devr} and \ref{prop:iso}) are given as well. 
Some general results concerning log-concave p.d.f.'s are also obtained.}

\newtheorem*{remark*}%[theorem]
{REMARK}

\newcommand{\dd}{\partial}
\renewcommand{\u}{\nearrow}
\renewcommand{\d}{\searrow}

\newcommand{\al}{\alpha}

\newcommand{\si}{\sigma}

\newcommand{\ka}{\kappa}
\newcommand{\la}{\lambda}
\newcommand{\ga}{\gamma}
\newcommand{\de}{\delta}
\newcommand{\be}{\beta}
\newcommand{\De}{\Delta}
\newcommand{\ep}{\varepsilon}

\newcommand{\thh}{\theta}

\newcommand{\tf}{\tilde f}
\newcommand{\tc}{\tilde c}

\newcommand{\q}{\mathrm{q}}
\newcommand{\qc}{\mathrm{qc}}

\newcommand{\ap}{\approx}
\newcommand{\ini}{\mathrm{ini}}

\newcommand{\p}{\mathrm{p}}
\renewcommand{\r}{\mathrm{r}}
\newcommand{\s}{\mathrm{s}}
\newcommand{\FF}{\mathrm{F}}
\newcommand{\EE}{\mathrm{E}}
\newcommand{\rat}{\mathrm{rat}}
\newcommand{\RAT}{\mathrm{RAT}}

\newcommand{\tr}{\tilde r}
\newcommand{\cri}{\mathrm{cr}} 
\newcommand{\eg}{\mathrm{eg}}
\newcommand{\tb}{\tilde b}

\renewcommand{\P}{\operatorname{\mathsf{P}}} 
\newcommand{\PP}{\mathcal{P}}
\newcommand{\E}{\operatorname{\mathsf{E}}}
\newcommand{\Var}{\operatorname{\mathsf{Var}}}

\newcommand{\Z}{\mathbb{Z}}
\newcommand{\R}{\mathbb{R}}

\newcommand{\F}{\mathcal{F}}

\renewcommand{\L}{\mathcal{L}}

\newcommand{\A}{\mathbf{A}}
\newcommand{\B}{\mathbf{B}}
\renewcommand{\c}{\mathbf{c}}
\newcommand{\h}{\mathbf{h}}

\newcommand{\oR}{\overline\R}

\newcommand{\sign}{\operatorname{sign}}

\newcommand{\wt}{\widetilde}

\renewcommand{\le}{\leqslant}
\renewcommand{\ge}{\geqslant}

\newcommand{\conv}{\operatorname{conv}}
\newcommand{\ext}{\operatorname{ext}}

\newcommand{\I}{\mathcal I}

\makeatletter
\newcommand{\pushright}[1]{\ifmeasuring@#1\else\omit\hfill$\displaystyle{#1}$\fi\ignorespaces}
\newcommand{\pushleft}[1]{\ifmeasuring@#1\else\omit$\displaystyle{#1}$\hfill\fi\ignorespaces}
\makeatother

\begin{document}

%%%%%%%%%%%%%%%%%%%%%%%%%%%%%%%%%%%%%%%%%%%%%%%%%%%%%%%%%%%%%%%%%%%
%%                                                               %%
%% No need for \maketitle.                                       %%
%%                                                               %%
%%%%%%%%%%%%%%%%%%%%%%%%%%%%%%%%%%%%%%%%%%%%%%%%%%%%%%%%%%%%%%%%%%%

%%%%%%%%%%%%%%%%%%%%%%%%%%%%%%%%%%%%%%%%%%%%%%%%%%%%%%%%%%%%%%%%%%%
%%                                                               %%
%% Please replace what follows by the body of your article       %%
%% (up to the bibliography):                                     %%
%%                                                               %%
%%%%%%%%%%%%%%%%%%%%%%%%%%%%%%%%%%%%%%%%%%%%%%%%%%%%%%%%%%%%%%%%%%%

\tableofcontents

%%%%% END OF TITLE PAGE %%%%%%%%%%%%%%%%%%%%%%%%%%%%%%%%%%%%%%%%%%%%%%

%%%%% BODY OF THE PAPER %%%%%%%%%%%%%%%%%%%%%%%%%%%%%%%%%%%%%%%%%%%%%%
% You should eventually delete (after reading) the rest of the text below %% 

\section{Preliminaries and generalities}\label{prelims}

Let $\PP$ denote the set of all Borel probability measures over $\R$. Let $\I$ stand for the set of all \emph{isotropic} measures in $\PP$: 
\begin{equation}\label{eq:iso}
	\I:=\Big\{\mu\in\PP\colon\int_\R x\,\mu(dx)=0,\ \int_\R x^2
\,\mu(dx)=1\Big\}.
\end{equation}

Let $\L$ denote the set of all isotropic log-concave measures in $\PP$. A measure $\mu\in\PP$ is called \emph{log concave} (cf.~\cite{prekopa71,borell74}) if 
$\mu_*((1-t)A+tB)\ge\mu(A)^{1-t}\mu(B)^t$ for all Borel subsets $A$ and $B$ of $\R$ and all $t\in(0,1)$, where $\mu_*$ is the inner measure corresponding to the measure $\mu$.

According to Theorem~1.1 by Borell \cite{borell74}, a measure in $\I$ is log concave iff it has a log-concave density (with respect to the Lebesgue measure). 

Recall that a function $f\colon\R\to\R$ is \emph{log concave} if $f\ge0$ and  
\begin{equation}\label{eq:log-concave}
	f(x_t)\ge f(x_0)^{1-t}f(x_1)^t
\end{equation}
for all real $x_0$ and $x_1$ and all $t\in(0,1)$, where  
\begin{equation}\label{eq:x_t}
	x_t:=(1-t)x_0+tx_1
\end{equation}
for real $t$. 

%A broad survey on log-concavity was given by Saumard and Wellner \cite{wellner-logconcav}. 

Let us say that a function $f\colon\R\to\R$ is \emph{two-piece exponential} if 
\begin{equation}\label{eq:2-piece}
	f(x)=e^{l_1(x)\wedge l_2(x)}\,1(-A<x<B)
\end{equation}
for real $x$, where $l_1$ and $l_2$ are affine functions, $u\wedge v:=\min(u,v)$ for real $u$ and $v$, and $-\infty\le-A<B\le\infty$. If \eqref{eq:2-piece} holds with $l_1=l_2$ for all real $x$, let us say that the two-piece exponential function $f$ is \emph{one-piece exponential}. If \eqref{eq:2-piece} holds with $A=B=\infty$ for all real $x$, let us say that the two-piece exponential function $f$ is \emph{non-truncated two-piece exponential}. Let $\F$ stand for the set of the lower semi-continuous (l.s.c.) densities of the measures in $\L$. Let 
$\F_2,\F_1,\F_{2,\infty}$ denote the sets of functions in $\F$ that are two-piece exponential, one-piece exponential, and non-truncated two-piece exponential, respectively. 
Let $\L_2,\L_1,\L_{2,\infty}$ denote the set of all measures in $\L$ with densities in $\F_2,\F_1,\F_{2,\infty}$, respectively. 

The p.d.f.'s $f\in\F_2$ can be parametrized by the formula 
\begin{equation}\label{eq:f_{A,B,u,c,h,k}}
	f(x)=f_{A,B,u,c,h,k}(x):=c e^{h(x-u)}1(-A<x\le u)+c e^{-k(x-u)}1(u<x<B)
\end{equation}
for all real $x$, where 
\begin{equation}\label{eq:A,B,u,c,h,k}
\begin{gathered}
	\text{$-\infty\le-A\le u\le B\le\infty$, $-A<B$, $u\in\R$, $c\in(0,\infty)$,} \\ 
	\text{and $h$ and $k$ are real numbers such that $h\ge-k$.}
\end{gathered}	
\end{equation}
One may note that here necessarily $A>0$ and $B>0$, because otherwise the barycenter of the probability distribution $\mu$ with density $f$ could not be $0$, violating the condition $\mu\in\L_2\subseteq\L$. 
Also, necessarily $h>0$ if $A=\infty$ and $k>0$ if $B=\infty$. In addition, the constant factor $c$ is uniquely determined by $A,B,u,h,k$, in view of the condition $\int_\R f_{A,B,u,c,h,k}=1$. 

\begin{theorem}\label{th:2-piece}
Let $H\colon\R\to\R$ be a %Borel-measurable 
l.s.c.\ locally integrable function such that for some $s\in(0,1)$ 
\begin{equation}\label{eq:H le}
	|H(x)|=O(e^{s |x|})\text{ as }|x|\to\infty. 
\end{equation}
%
%satisfying one of the following two conditions:
%\begin{enumerate}[(i)]
%	\item $|H(x)|=o(x^2)$ as $|x|\to\infty$ and the function 
%$$\R\ni x\mapsto H_{\la_1,\la_2,\la_3}(x):=H(x)-\la_1-\la_2 x-\la_3 x^2$$ 
%has at most $4$ zeros for any real $\la_1,\la_2,\la_3$. 
%\item $H(x)=1(0<x<c)$ for some real $c>0$ and all real $x$.
%\end{enumerate}
Then $H$ is $\mu$-integrable for all $\mu\in \L$ and 
\begin{equation}\label{eq:Th1.1}
	\inf_{\mu\in \L}\int_\R H\,d\mu=\inf_{\mu\in \L_2}\int_\R H\,d\mu; 
\end{equation}
moreover, these infima are attained. 
\end{theorem}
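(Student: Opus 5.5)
The plan has three steps: uniform exponential tail bounds over $\L$ (giving integrability and tightness), existence of a minimizer by compactness and lower semicontinuity, and then reduction to $\L_2$ via the Fradelizi--Gu\'edon description of extreme points of log-concave measures under linear constraints.

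\textbf{Step 1: uniform tails and integrability.} I would first show that for every $s\in(0,1)$
\begin{equation*}
	\sup_{\mu\in\L}\int_\R e^{s|x|}\,\mu(dx)<\infty .
\end{equation*}
Take $f=e^{-V}\in\F$ with $V$ convex. The conditions $\int x f=0$ and $\int x^2f=1$ give two standard facts. First, $f$ is bounded above and below by absolute constants on a fixed neighborhood of $0$. Second, the convexity of $V$ then forces $V(x)\ge V(0)+|x|-C$ for some absolute constant $C$.

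The extremal tail rate is $1$. It is realized by the shifted exponential density $e^{-(x+1)}1(x>-1)$, which is isotropic. This is why the hypothesis requires $s<1$.

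With this bound, \eqref{eq:H le} gives $\mu$-integrability of $H$ for all $\mu\in\L$. It also gives uniform integrability of $|H|$, and of $x^2$, with respect to the family $\L$.

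\textbf{Step 2: existence of a minimizer.} Step 1 implies that $\L$ is tight. Take a minimizing sequence $\mu_n\in\L$ and pass to a weakly convergent subsequence, $\mu_n\to\mu$.

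The limit lies in $\L$, for three reasons:
\begin{itemize}
\item weak limits of log-concave measures are log-concave, by Borell's characterization;
\item uniform integrability of $x$ and $x^2$ preserves $\int x\,d\mu=0$ and $\int x^2\,d\mu=1$;
\item a log-concave measure with variance $1$ is not degenerate, so it has a density.
\end{itemize}

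Since $H$ is l.s.c., the truncations $H\wedge M$ are l.s.c.\ and bounded below on compacts. The portmanteau theorem, combined with the uniform integrability of $|H|$ (which handles the truncation and the tails), yields
\begin{equation*}
	\int H\,d\mu\le\liminf_n\int H\,d\mu_n .
\end{equation*}
Hence the infimum over $\L$ is attained. The same argument, with $\L_2$ in place of $\L$, will be needed once Step 3 is done; I expect $\L_2$ to be weakly closed in $\L$, because limits of two-piece exponentials remain two-piece exponential.

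\textbf{Step 3: reduction to $\L_2$ (the main obstacle).} Here I would invoke the Fradelizi--Gu\'edon localization theorem. Fix the minimizer $\mu_0$ from Step 2 and let $K$ be the weakly closed convex hull of the log-concave probability measures $\mu$ on a large compact interval satisfying $\int x\,d\mu=0$ and $\int x^2\,d\mu=1$. Their theorem describes the extreme points of such constrained sets: with $p$ linear constraints, the extreme points have densities $e^{-V}$ with $V$ convex, piecewise affine, and having at most $p$ affine pieces. Here $p=2$, so the extreme points are two-piece exponential.

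The functional $\mu\mapsto-\int H\,d\mu$ is linear and u.s.c.\ on $K$ (Step 2). By the Krein--Milman/Bauer maximum principle, its maximum over $K$ is attained at an extreme point, which lies in $\L_2$. Letting the compact interval grow and using Step 1 to control the tails removes the truncation. Finally, tightness together with the closedness of $\L_2$ gives a minimizer in $\L_2$ with value $\le\int H\,d\mu_0$, which proves \eqref{eq:Th1.1}.

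The delicate points all concern the exact form of the extreme-point theorem being used:
\begin{itemize}
\item The version stated for inequality constraints $\int f_i\,d\mu\ge0$ must be adapted to the two equality constraints here. Writing each equality as two inequalities naively gives four constraints, hence four pieces, which is too many.
\item One must check that only the affine constraints ``count'' and that the truncation to an interval does not add pieces, since the indicator of an interval is built into \eqref{eq:2-piece}.
\end{itemize}

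If the localization theorem does not adapt cleanly, I would fall back on a direct variational argument at a minimizer $f=e^{-V}$. Suppose $V$ had at least three genuine affine pieces, or were strictly convex somewhere. Then one can construct perturbations $e^{-V-\ep g}$ with $g$ piecewise affine such that:
\begin{itemize}
\item both perturbation directions $\pm\ep$ preserve log-concavity;
\item the three constraints (mass, mean, variance) are kept by solving a $3\times3$ linear system;
\item the value $\int H f$ is affine in the perturbation to first order.
\end{itemize}
Moving in the direction that does not increase $\int Hf$ until an affine piece merges or disappears reduces the number of pieces. Repeating this yields a two-piece minimizer.
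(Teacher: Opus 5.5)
Your architecture matches the paper's proof:
\begin{itemize}
\item minimize over $\L^b=\L\cap\PP^b$ and obtain an extreme minimizer by Krein--Milman/Bauer;
\item identify that minimizer as two-piece exponential via Fradelizi--Gu\'edon;
\item remove the truncation, then use weak compactness of $\L_2$ together with lower semicontinuity.
\end{itemize}
Your Step~2 is harmless but unnecessary; the paper only shows that $\int H\,d\mu\ge\inf_{\L_2}\int H$ for every $\mu\in\L$, and that this infimum is attained.

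\textbf{First gap: equality constraints.} You name this as the main obstacle but leave it open, and it has a short resolution you should supply. Keep only the two one-sided constraints: let $\L^b_\le$ be the set of log-concave $\mu\in\PP^b$ with $\int x\,\mu(dx)\le0$ and $\int x^2\,\mu(dx)\le1$. This corresponds to $g_1(x)=-x$ and $g_2(x)=1-x^2$, so $p=2$. Now suppose $\mu\in\ext\conv\L^b$ and $\mu=t\nu_1+(1-t)\nu_2$ with $\nu_i\in\conv\L^b_\le$. Write each $\nu_i$ as a finite convex combination of elements of $\L^b_\le$. Each of these has $\int x\le0$ and $\int x^2\le1$, while the total has equality, so each has equality and lies in $\L^b$. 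Hence $\nu_i\in\conv\L^b$, and therefore $\nu_1=\nu_2=\mu$. This gives $\ext\conv\L^b\subseteq\ext\conv\L^b_\le$, so Theorem~1 of Fradelizi--Gu\'edon applies with $p=2$, $d=1$.

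You also need Milman's converse theorem, using weak compactness of $\L^b$ (via Borell). It shows that the extreme point of the \emph{closed} hull $\overline{\conv}\,\L^b$ produced by Bauer's principle lies in $\L^b$, and hence is extreme in $\conv\L^b$. Drop the variational fallback. It controls only first-order behaviour, so it does not show that a finite move keeps the objective and the constraints where you want them. Moreover, a strictly convex $V$ cannot be reduced to two pieces by finitely many merges.

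\textbf{Second gap: removing the truncation.} The step ``letting the compact interval grow'' is not just tail control. You need approximants $\tilde\mu_n\in\L^{b_n}$ of $\mu_0$ with $\limsup_n\int H\,d\tilde\mu_n\le\int H\,d\mu_0$. Weak convergence cannot give this: for l.s.c.\ $H$ it yields only $\liminf_n\int H\,d\tilde\mu_n\ge\int H\,d\mu_0$, which is the wrong direction. The paper handles this as follows:
\begin{itemize}
\item take the law of $X$ conditioned on $|X|\le n$, and recentre and rescale it to get $Z_n\in\L^{2n}$;
\item note that the densities $g_n$ converge to $f$ a.e.\ and satisfy $g_n\le1$;
\item obtain $\int_{-z}^z Hg_n\to\int_{-z}^z Hf$ by dominated convergence, using that $H$ is \emph{locally integrable};
\item control the region $|x|>z$ by the uniform tail bound.
\end{itemize}
Your plan never uses local integrability, which signals that this step is missing.

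\textbf{Smaller points.}
\begin{itemize}
\item The uniform rate-one bound in Step~1 is asserted rather than derived. The sharp rate $1$ is exactly what makes every $s<1$ work, and it is a genuine lemma; cite Lov\'asz--Vempala, $\P(|X|>t)\le e^{1-t}$.
\item The truncation in Step~2 should be $H\vee(-M)$, which is bounded below, not $H\wedge M$.
\item Weak closedness of $\L_2$ needs an actual argument, because pieces can degenerate in the limit; see Proposition~\ref{prop:L_2}.
\end{itemize}
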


Theorem~\ref{th:2-piece} is a special case of Theorem~\ref{rem:H} below. 
Indeed, condition \eqref{eq:H le} in Theorem~\ref{th:2-piece} can be relaxed. To state the relaxed condition, we need the following definition. 

\begin{definition}\label{def:V}
Let $V$ denote the set of all functions $v\colon(0,\infty)\to\R$ such that, for 
some real $y>0$, % and some real $\al<1$, 
on the interval $[y,\infty)$ the function $v$ is (strictly) positive, nonincreasing, integrable, and differentiable, with $v'\ge-v$.  
\end{definition}
 
%\begin{enumerate}[(i)]
%	\item  on $(y,\infty)$, the function $v$ is nonnegative and differentiable, with $v'\ge-\al v$; 
%	\item $\int_y^\infty v<\infty$. 
%\end{enumerate}

For instance, if $v(x)=1/x^p$ for some real $p>1$ and all real $x>0$, then $v\in V$. 

\begin{theorem}\label{rem:H}
The conclusion in Theorem~\ref{th:2-piece} will hold if condition \eqref{eq:H le} is replaced by the following, more general condition: for some $v\in V$,  
\begin{equation}\label{eq:H le,v}
	|H(x)|=o\big(e^{|x|}v(|x|)\big)\text{ as }|x|\to\infty.  
\end{equation}
%
%the 
%expression $e^{s |x|}$ in \eqref{eq:H le} is replaced by $e^{|x|}v(|x|)$.  
\end{theorem}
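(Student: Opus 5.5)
We need three things: that $H$ is $\mu$-integrable for every $\mu\in\L$, that the infimum of $\int H\,d\mu$ over $\L$ equals the infimum over $\L_2$, and that both infima are attained. The plan is (i) a uniform tail bound, (ii) compactness plus lower semicontinuity to get attainment over $\L$, and (iii) a perturbation argument showing a minimizer can be taken two-piece exponential. In (iii) the tail condition \eqref{eq:H le,v} enters only through integrability and uniform integrability.

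\emph{Step 1 (uniform tails).} First we show that there are absolute constants $C,c_0>0$ with $f(x)\le C e^{-|x|}$ for all $f\in\F$ and all $|x|$ large; at the least, $\P(|X|\ge x)\le Ce^{-x}\,\mathrm{poly}(x)$ with the exact rate $e^{-x}$ uniformly over $\L$. The rate $1$ is sharp: the shifted standard exponential density $e^{-(x+1)}1(x>-1)$ is in $\L$. I would prove this directly from log-concavity and isotropy. The density is bounded above and below near $0$ by absolute constants. If $f(x_0)\ge e^{-x_0}\cdot(\text{large})$ for some large $x_0$, then concavity of $\log f$ on $[0,x_0]$ forces too much mass at distance $\asymp x_0$, and this contradicts $\E X^2=1$ combined with $\E X=0$. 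Given this bound, $\int |H|f\le \int o(e^{|x|}v(|x|))\,Ce^{-|x|}\,dx<\infty$ because $v$ is integrable at $\infty$. The tail contributions are then uniformly small over $\L$, so $\{H\}$ is uniformly integrable over $\L$. A polynomial prefactor would spoil integrability against $v$. I therefore expect to need the sharper form, exploiting $v'\ge -v$, i.e.\ $e^{x}v(x)$ nondecreasing, and to integrate by parts against the tail $\P(X>x)$.

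\emph{Step 2 (attainment over $\L$).} The set $\L$ is tight, since every measure in it has variance $1$. Weak limits of log-concave measures are log-concave, and by Step 1 together with the uniform exponential tails the first two moments pass to the limit. Also, limits cannot degenerate to a point mass, because the variance stays $1$. Hence $\L$ is weakly compact. Densities of log-concave measures converge locally uniformly off the boundary of the support. Because $H$ is l.s.c.\ and uniformly integrable, truncating $H$ from below and using the portmanteau theorem shows that $\mu\mapsto\int H\,d\mu$ is weakly l.s.c.\ on $\L$. So a minimizer $\mu^*$ over $\L$ exists.

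\emph{Step 3 (reduction to $\F_2$).} This is the main obstacle. Write $\mu^*$ with density $f^*=e^{-\varphi}$, where $\varphi$ is convex. The idea is a Choquet or extreme-point argument. The constraints $\int f=1$, $\int xf=0$, $\int x^2f=1$ and the objective are linear in $f$, but log-concavity is not a convex constraint. I would use the standard localization approach (Fradelizi--Guédon type): on the set of log-concave densities satisfying a fixed number $m$ of linear constraints, the extreme points of the convex hull of the feasible set are of the form $e^{\text{affine}}$ on an interval, or piecewise with few pieces. For $m$ linear constraints the extreme points are densities $e^{\psi}1_{[a,b]}$ with $\psi$ piecewise affine with at most $m-1$ pieces, roughly. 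Here the constraints are the two moments and the objective is linear. Minimizing a linear functional over the closed convex hull is achieved at an extreme point. That extreme point is still in $\L$, with exact moments, because the constraints are linear. Counting constraints should give at most two affine pieces with truncation, i.e.\ an element of $\F_2$. Since $\L_2\subseteq\L$, we get equality of the infima in \eqref{eq:Th1.1} with attainment in $\L_2$.

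The delicate points are two. First, the constraint count must yield two pieces rather than three; normalization is handled by working with probability measures, so only the two moment constraints remain. Second, the extreme-point theorem must be justified on the noncompact real line, which uses Step 1 to control the tails.
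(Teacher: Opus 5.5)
\textbf{Step~3 has a genuine gap, at exactly the point you flag as delicate.} Theorem~1 of Fradelizi--Gu\'edon describes the extreme points of $\conv P_f$ only for measures supported in a \emph{compact} convex set. You need that description for measures on all of $\R$, including ones with unbounded support, such as the shifted exponential or members of $\F_{2,\infty}$.

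Grant that the abstract extreme-point step can be run on $\R$: $\overline{\conv}\,\L$ is compact because it lies in the tight, weakly closed set $\{\mu\colon\int x^2\,d\mu\le1\}$, and Milman's theorem puts its extreme points in $\L$. Even so, the tail bounds of Step~1 give compactness and lower semicontinuity only. They say nothing about the \emph{shape} of those extreme points.

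There is also no evident way to localize. The normalized restriction to $[-b,b]$ of an extreme point of $\conv\L$ need not be extreme in any compact problem. A splitting of the restriction generally cannot be glued back onto the tails while keeping log-concavity. So the conclusion that the minimizer lies in $\L_2$ is unsupported as written.

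\textbf{How the paper avoids this.} It never applies Fradelizi--Gu\'edon on $\R$. Instead it proceeds in three stages.
\begin{enumerate}[(a)]
\item It solves the problem on $K^b=[-b,b]$, obtaining minimizers $\mu^b\in\L_2$ over $\L^b$ (Subsection~\ref{compact}).
\item It approximates an arbitrary $\mu\in\L$ by the re-standardized conditional laws $\mu_n\in\L^{2n}$ of $X$ given $|X|\le n$. The uniform tail bound then gives $\int H\,d\mu\ge\limsup_n\int H\,d\mu_n\ge\limsup_n\int H\,d\mu^{2n}$.
\item It proves that $\L_2$ is weakly sequentially compact (Proposition~\ref{prop:L_2}). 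That is, weak limits of isotropic two-piece exponential laws are again two-piece exponential; this needs a case analysis on the limiting affine exponents that rules out degenerate limits.
\end{enumerate}
Stage (c) is missing from your proposal entirely. Without it, a limit of the compact minimizers lands only in $\L$, not in $\L_2$. With it, lower semicontinuity of $\int H$ along a minimizing sequence in $\L_2$ finishes the proof, and your Step~2 (attainment over $\L$) becomes unnecessary.

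\textbf{A smaller point on Step~1.} The self-contained argument you sketch does not give the sharp rate. A large value $f(x_0)=Ke^{-x_0}$ only forces $f(x)\ge c\,e^{-\alpha x}$ on $[0,x_0]$, with $\alpha=1-\ln(K/c)/x_0$. The mass this forces sits mostly at distance $O(1/\alpha)$ from the origin, not at distance comparable to $x_0$. Hence the second-moment constraint yields decay only at some rate below $1$, which is useless against, say, $v(x)=x^{-2}$. Instead, cite $\P(|X|>t)\le e^{1-t}$ (Lov\'asz--Vempala, Lemma~5.7) and integrate by parts using $v'\ge-v$, as in Proposition~\ref{prop:exp decay}.
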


That \eqref{eq:H le} (for some $s\in(0,1)$) implies \eqref{eq:H le,v} (for some $v\in V$) is witnessed by letting $v(x):=e^{-t x}$ for any $t\in(0,1-s)$ and all real $x>0$. So, Theorem~\ref{rem:H} implies Theorem~\ref{th:2-piece}. 

Theorem~\ref{rem:H} will be proved in Section~\ref{2-piece proof}. 

\begin{remark*}
Necessary details of symbolic and exact numerical calculations in some of the proofs in this paper will be given in Mathematica notebooks. 
The Mathematica files cited in this paper are provided in Supplement~B, described on page~\pageref{mathematica}. 
%in Supplement~B described on page~\pageref{mathematica}. 
%Any Mathematica notebook referred to in any of those proofs will be in the folder mentioned in the description of Supplement~B on page~\pageref{mathematica}.  
\end{remark*}

%\begin{supplement}
%\stitle{%Title of Supplement A.
%Supplement B: Mathematica Files}\label{mathematica}
%\sdescription{%Short description of Supplement B.
%Zip file MathematicaFiles.zip containing  \break 
%Mathematica notebooks and Mathematica package \texttt{hammer.wl} providing details of symbolic and exact numerical calculations in proofs in the paper. Before using these Mathematica notebooks, extract all the files in this zip file into a folder/directory of your own. 
%}
%\end{supplement}

\begin{theorem}\label{th:4 zeros}
Suppose that the conditions of Theorem~\ref{rem:H} hold. Suppose also that the function $H$ is continuous. Suppose finally that the function $H$ is \emph{$4$-good} in the sense that, for any real $\la_1,\la_2,\la_3$, the function 
\begin{equation*}
\R\ni x\mapsto R(x):=R_{\la_1,\la_2,\la_3}(x):=H(x)-\la_1-\la_2 x-\la_3 x^2
\end{equation*}
has at most four zeros.
Then the infima in \eqref{eq:Th1.1} are attained at a measure $\mu\in \L_1\cup\L_{2,\infty}$. 
% with a density $f$ of the form \eqref{eq:2-piece} where either $l_1=l_2$ or $A=B=\infty$. 
\end{theorem}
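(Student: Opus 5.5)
The plan is to start from the minimizer given by Theorem~\ref{rem:H}, derive first-order optimality conditions with Lagrange multipliers $\la_1,\la_2,\la_3$ for the three moment constraints, and then use the $4$-good hypothesis to rule out the cases that are neither one-piece nor non-truncated.

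\emph{Setting up the optimality conditions.} By Theorem~\ref{rem:H}, the infimum in \eqref{eq:Th1.1} is attained at some $\mu\in\L_2$ with density $f=f_{A,B,u,c,h,k}$ as in \eqref{eq:f_{A,B,u,c,h,k}}. Suppose, to get a contradiction, that $\mu\notin\L_1\cup\L_{2,\infty}$. Then $h\ne-k$, so the kink at $u$ is genuine, and at least one of $A,B$ is finite. I would consider smooth perturbations $f_\ep$ of $f$ that stay log-concave. Imposing the three constraints $\int f_\ep=1$, $\int xf_\ep=0$, $\int x^2f_\ep=1$ through multipliers, optimality of $\mu$ should give
\[
\tfrac{d}{d\ep}\Big|_{\ep=0}\int_\R R_{\la_1,\la_2,\la_3}\,f_\ep\ \ge0
\]
for every admissible direction, with equality for two-sided ones. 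Continuity of $H$ and the growth condition \eqref{eq:H le,v} should justify the differentiation under the integral. I would fix $\la_1,\la_2,\la_3$ by a standard Lagrange/transversality argument, using that the constraint map has full rank $3$ along the parameter family; this needs a short check.

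\emph{Extracting zeros of $R$.} Perturbing $\log c$, $h$ and $k$ gives two-sided directions $f$, $(x-u)1(x<u)f$ and $(x-u)1(x>u)f$. Hence $\int R\,w\,f=0$ for every $w$ in the $3$-dimensional space $W$ of continuous functions on $(-A,B)$ that are linear on each side of $u$. If $R$ had at most two sign changes on $(-A,B)$, one could choose $w\in W$ with $w\ne0$ and $Rw\ge0$, a contradiction. To build such $w$: if both sign changes lie on one side of $u$, take $w$ to be the affine function through them there, continued linearly across $u$. If they lie on opposite sides, take $w$ vanishing at both points, with the kink at $u$ and the slopes chosen to match signs. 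So $R$ has at least three sign changes, hence at least three zeros, in the interior of $(-A,B)$. Next, a finite endpoint, say $B<\infty$, can be moved in both directions without losing log-concavity. The derivative with respect to $B$ is $R(B)f(B-)$ with $f(B-)>0$, so $R(B)=0$; likewise $R(-A)=0$ if $A<\infty$. If both endpoints are finite, this gives at least five zeros, contradicting $4$-goodness.

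\emph{The main obstacle: exactly one finite endpoint}, say $A=\infty$ and $B<\infty$. Here the count above gives only four zeros, and I need a fifth. First I would try one-sided variations that add a new kink: for $t\in(-\infty,B)$, replacing $\log f$ by $\log f-\ep(x-t)_+$ with $\ep>0$ keeps log-concavity. This yields inequalities $\int R\,f\,(x-t)_+\,dx\ge0$ for all $t$; similarly, using $(t-x)_+$ on the left piece gives $\int R\,f\,(t-x)_+\,dx\ge0$. Combining these one-sided conditions with the equalities above, together with the known signs of $R$ between its four zeros (alternating, with the last zero at $B$), I expect a contradiction. One route is to show that the sign pattern forces $R$ to be negative just to the left of $B$ while some $t$ near $B$ makes $\int R f(x-t)_+<0$. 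Another is to show that $R$ must touch zero again, giving a fifth zero. If this fails, I would instead try a two-sided variation: trade the truncation at $B$ for a steeper exponential tail while keeping the constraints. I expect this case analysis, keeping track of the signs of $R$ on the intervals between its zeros, to be the main technical difficulty.
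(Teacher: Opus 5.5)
There is a genuine gap in the case where exactly one of $A,B$ is finite. There your count gives only four zeros, and the remedies you sketch (one-sided kink-adding variations, trading the truncation for a steeper tail) are stated as expectations, not proved.

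The missing idea is that the kink location $u$ is itself a free, two-sided parameter. Since $-A<u<B$ and $h>-k$, moving $u$ with $c,h,k$ fixed keeps the density inside the log-concave two-piece family. The boundary terms at $x=u$ cancel because $f$ is continuous there, so
\[
\dd_u\Psi=-h\int_{-A}^u Rf+k\int_u^B Rf .
\]
Together with $\dd_c\Psi=\frac1c\big(\int_{-A}^u Rf+\int_u^B Rf\big)=0$ and $h\ne-k$, this gives $\int_{-A}^u Rf=0=\int_u^B Rf$ separately. So the right test space is the four-dimensional space of functions that are affine on each side of $u$ \emph{with a jump allowed at $u$}, not your three-dimensional space $W$ of continuous ones. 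Adding $\dd_h\Psi=\dd_k\Psi=0$ gives $\int_{-A}^u xRf=0=\int_u^B xRf$. Lemma~\ref{lem:2 zeros}, applied on each of $(-A,u)$ and $(u,B)$, then gives two zeros on each side, hence four interior zeros. A single finite endpoint already supplies the fifth zero. The contradiction with $4$-goodness therefore holds uniformly, without separating one finite endpoint from two and without one-sided variations. This is exactly how the paper argues.

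Two smaller points. First, differentiation under the integral sign does not follow automatically from continuity of $H$ and \eqref{eq:H le,v}. When, say, $A=\infty$, you need $h>1$ strictly, so that $|x|\,|H(x)|\,e^{\tilde h(x-u)}$ has an integrable majorant near $-\infty$ uniformly for $\tilde h$ near $h$. If $h\le1$, the integral $\int|H|\,\tilde f$ could even diverge for nearby $\tilde h<1$. The paper obtains $h>1$ (and $k>1$ when $B=\infty$) from Lemma~\ref{lem:k>1}, via the convolution representation of Lemma~\ref{lem:perturb} and $\Var X=1$.

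Second, instead of checking a rank condition to normalize the multiplier of $H$ to $1$, you can use a Fritz John--type (Lagrange--Carath\'eodory) rule with a nonzero $(\la_0,\dots,\la_3)$. If $\la_0=0$, then $R$ is a nonzero polynomial of degree at most $2$, which cannot have five zeros.
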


Theorem~\ref{th:4 zeros} will be proved in Section~\ref{4 zeros proof}.

%!!!!???? References for this?

\begin{remark}\label{rem:4 zeros}
Let $H\colon\R\to\R$ be a twice continuously differentiable function such that, for some real $x_1$ and $x_2$ such that $x_1<x_2$, $H''$ (strictly) increases from $0$ on the interval $(-\infty,x_1]$, (strictly) decreases on the interval $[x_1,x_2]$, and increases back to $0$ on the interval $[x_2,\infty)$. Then $H$ is \emph{$4$-good}.  

Indeed, then $R''=H''-2\la_3$ has at most two zeros and hence $R$ has at most $4$ zeros. 

In particular, the c.d.f.\ of any normal distribution and the c.d.f.\ of any Cauchy distribution are $4$-good. 

Note also that, if $H$ is $4$-good, then $\ka H$ is also $4$-good for any nonzero real $\ka$. \qed
\end{remark}

\begin{corollary}\label{cor:b}
For any real $b$ and any real $\ka$, if $H=\ka 1_{[b,\infty)}$, 
then the infima in \eqref{eq:Th1.1} are attained at a measure $\mu\in \L_1\cup\L_{2,\infty}$. 
% with a density $f$ of the form \eqref{eq:2-piece} where either $l_1=l_2$ or $A=B=\infty$. 
\end{corollary}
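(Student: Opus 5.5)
The plan is to derive Corollary~\ref{cor:b} from Theorem~\ref{th:4 zeros} by a smoothing and limiting argument. Theorem~\ref{th:4 zeros} cannot be applied directly, because $H=\ka 1_{[b,\infty)}$ is discontinuous (and for $\ka>0$ it is not even l.s.c.). The case $\ka=0$ is trivial, so assume $\ka\ne0$. Every $\mu\in\L$ has a density, so replacing $1_{[b,\infty)}$ by $1_{(b,\infty)}$ changes no integral. Hence, whichever of the two versions is l.s.c., Theorem~\ref{rem:H} applies to $H$ and the infimum over $\L$ is attained; what must be shown is that it is attained in $\L_1\cup\L_{2,\infty}$.

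\emph{Step 1 (approximation).} Let $\Phi$ be the standard normal c.d.f. For $n\ge1$ set $H_n(x):=\ka\,\Phi(n(x-b))$. Each $H_n$ is continuous and bounded, so it satisfies \eqref{eq:H le}, hence \eqref{eq:H le,v}. By Remark~\ref{rem:4 zeros}, a normal c.d.f.\ is $4$-good, and so is its multiple by the nonzero constant $\ka$. Therefore Theorem~\ref{th:4 zeros} gives, for each $n$, a minimizer $\mu_n\in\L_1\cup\L_{2,\infty}$ of $\int H_n\,d\mu$ over $\mu\in\L$. Moreover $H_n\to H$ pointwise off the single point $b$, with $|H_n|\le|\ka|$. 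By dominated convergence, $\int H_n\,d\nu\to\int H\,d\nu$ for every $\nu\in\L$, so
\[
\limsup_n \int H_n\,d\mu_n\le \inf_{\nu\in\L}\int H\,d\nu .
\]

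\emph{Step 2 (compactness).} Write the density of $\mu_n$ in the form \eqref{eq:f_{A,B,u,c,h,k}}, using either $l_1=l_2$ (the $\L_1$ case) or $A=B=\infty$ (the $\L_{2,\infty}$ case). I will use the standard facts that isotropic log-concave densities are uniformly bounded above and have uniformly exponentially decaying tails. From these I will extract a subsequence along which the parameters converge in the extended reals, and check that the densities converge a.e.\ to a density $f$ that again belongs to $\F_1\cup\F_{2,\infty}$ (possibly after changing it on a null set). The degenerate limits need explicit treatment. For instance, in a non-truncated two-piece density a slope may tend to $\pm\infty$; this produces a truncation and only one remaining exponential piece, i.e.\ a one-piece exponential, so the limit lies in $\F_1$. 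The uniform exponential tail bound lets the first two moments pass to the limit, so the limit measure $\mu$ lies in $\L$, and hence in $\L_1\cup\L_{2,\infty}$. By Scheffé's lemma, $f_n\to f$ in $L^1$.

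\emph{Step 3 (conclusion).} Because $|H_n|\le|\ka|$, $H_n\to H$ off $\{b\}$, and $f_n\to f$ in $L^1$ (so $\mu_n\to\mu$ in total variation), we get
\[
\int H_n\,d\mu_n-\int H\,d\mu
=\int H_n\,(f_n-f)+\int (H_n-H)f\to 0 .
\]
Combined with Step 1, this gives $\int H\,d\mu\le\inf_{\L}\int H$. So $\mu\in\L_1\cup\L_{2,\infty}$ is a minimizer, and the infima in \eqref{eq:Th1.1} are attained there.

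I expect Step 2 to be the main obstacle: proving that $\L_1\cup\L_{2,\infty}$ is closed under this convergence. Concretely, one must enumerate all ways the parameters $(A,B,u,h,k)$ can escape to $\pm\infty$ while isotropy is preserved. In each case one must verify that the limit is still either one-piece exponential or a non-truncated two-piece exponential, and never a genuinely truncated two-piece one.
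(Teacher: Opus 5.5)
Your proposal is correct and takes essentially the same route as the paper: smooth $H$ by normal c.d.f.'s with vanishing variance, apply Theorem~\ref{th:4 zeros} to get minimizers in $\L_1\cup\L_{2,\infty}$, extract a convergent subsequence, and pass to the limit. The compactness you flag as the main obstacle in Step~2 is exactly Proposition~\ref{prop:L_2}, which the paper has already proved and simply cites; your Scheff\'e/$L^1$ argument in Step~3 is only a minor variant of the paper's limit passage, which uses weak convergence, non-atomicity of the limit, and the bound $f\le1$ near $b$.
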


Corollary~\ref{cor:b} will be proved in Section~\ref{4 zeros proof}.

\medskip

Any $f\in\F_1$ has representation \eqref{eq:f_{A,B,u,c,h,k}} with, say, $u=0$ (recall that $A>0$ and $B>0$) and $k=-h$, which leaves four parameters: $A,B,c,h$. However, these four parameters are tied by the three restrictions -- on the zeroth, first, and second moments of $f$, as $f$ is a density of a measure in $\L_1$ and thus in $\L_2$. In the final analysis, this leaves $f$ with just one ``degree of freedom''. And indeed, as stated in Theorem~\ref{th:par1} below, the functions in $\F_1$ can be explicitly parametrized by one (extended) real parameter.  

To state Theorem~\ref{th:par1}, we will need the following notation. 
For real $r\ne0$, let % SomeDefs.nb, parametrization1.nb 
\begin{equation}\label{eq:A,B,c,h(r)}
	\A(r):=\frac{\si(r)}{\sqrt{u(r)}},\quad \B(r):=\frac{s(r)}{\sqrt{u(r)}},
	\quad \c(r):=\frac{\sqrt{u(r)}}{(e^r-1)^2}\exp\frac{s(r)}{e^r-1},
	\quad \h(r):=-\frac{\sqrt{u(r)}}{e^r-1},
\end{equation}
where
\begin{equation}\label{eq:u,s,si}
	u(r):=e^{2 r}-e^r (r^2+2)+1,\quad s(r):=e^r (r-1)+1,\quad \si(r):=e^r-1-r
\end{equation}
for all real $r$. As will be shown in Remark~\ref{rem:e-poly}, $u(r)>0$ for all real $r\ne0$ and $u(0)=0$.  
Extend the functions $\A,\B,\c,\h$ to $\oR:=[-\infty,\infty]$ by continuity, so that % checking\\r=0 or pm infty.nb 
\begin{equation}\label{r=0 or pm infty}
\begin{gathered}
	\A(-\infty):=\infty,\quad \A(0):=\sqrt3,\quad \A(\infty):=1,\\  
	\B(-\infty):=1,\quad \B(0):=\sqrt3,\quad \B(\infty):=\infty,\\  
	\c(-\infty):=\frac1e,\quad \c(0):=\frac1{2\sqrt3},\quad \c(\infty):=\frac1e,\\  
	\h(-\infty):=1,\quad \h(0):=0,\quad \h(\infty):=-1.  
\end{gathered}	
\end{equation}
These limits are verified in Mathematica notebook GeneralDefinitions.nb. 

\begin{theorem}\label{th:par1}
Take any $f\in\F_1$, so that 
\begin{equation}\label{eq:f_{A,B,c,h}}
	f(x)=f_{A,B,c,h}(x):=c e^{hx}1(-A<x<B)
\end{equation}
for all real $x$, where 
\begin{equation}\label{eq:A,B,c,h}
\begin{gathered}
	\text{$-\infty\le-A<B\le\infty$, $c\in(0,\infty)$, $h\in\R$.} 
\end{gathered}	
\end{equation} 
Then for a unique $r\in\oR$ we have 
\begin{equation}\label{eq:par1}
	(A,B,c,h)=(\A(r),\B(r),\c(r),\h(r)).
\end{equation}
Vice versa, if \eqref{eq:par1} holds for some $r\in\oR$, then $f_{A,B,c,h}\in\F_1$. 
\end{theorem}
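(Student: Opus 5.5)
The natural parameter here is $r:=h(A+B)$ when $A,B$ are finite, that is, the log-ratio $\ln\bigl(f(B-)/f(-A+)\bigr)$ of the density values at the two endpoints. The plan is to write $f_{A,B,c,h}$ in the form $\tilde c\,e^{h(x+A)}1(0<x+A<L)$ with $L:=A+B$, and to impose the three moment conditions $\int f=1$, $\int xf=0$, $\int x^2f=1$. Substituting $y=h(x+A)$, each condition becomes an elementary integral of $y^je^y$ over $(0,r)$. Since the measure is the same up to an affine change of variable, the normalized centered variable has a distribution depending on $r$ alone.

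Concretely, let $Y$ have density proportional to $e^y$ on $(0,r)$ (for $r<0$, read the interval as $(r,0)$, which gives the same formulas). Its mean and variance are explicit rational-exponential functions of $r$:
\begin{equation*}
\E Y=\frac{re^r}{e^r-1}-1=\frac{s(r)}{e^r-1},\qquad \Var Y=\frac{u(r)}{(e^r-1)^2}.
\end{equation*}
The second identity is a direct computation, using $\int_0^r y^2e^y\,dy=e^r(r^2-2r+2)-2$. Then $X=(Y-\E Y)/\sqrt{\Var Y}$ has the density $f$, with
\begin{equation*}
h=\sqrt{\Var Y}=\pm\frac{\sqrt{u(r)}}{|e^r-1|},\qquad B=\frac{r-\E Y}{\sqrt{\Var Y}},\qquad A=\frac{\E Y}{\sqrt{\Var Y}}.
\end{equation*}
Matching these with $\A(r)$, $\B(r)$, $\h(r)$ requires a sign convention. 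From $\h(\infty)=-1$ we see that the paper's $r$ is actually $-h(A+B)$, that is, the log-ratio of the left endpoint value to the right one. With this choice, simplifying $r-\E Y$ and $\E Y$ against $\sqrt{u}/(e^r-1)$ gives exactly $\si(r)/\sqrt u$ and $s(r)/\sqrt u$, in whichever order the sign convention dictates. The value of $c$ follows from normalization, and I will verify that it equals $\c(r)$ by evaluating the density at $x=0$, which produces the factor $\exp\bigl(s(r)/(e^r-1)\bigr)$. I will use Remark~\ref{rem:e-poly}, $u(r)>0$ for $r\ne0$, to justify taking square roots.

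The limiting cases are handled separately. If $h=0$, then $f$ is uniform and isotropy forces $A=B=\sqrt3$ and $c=1/(2\sqrt3)$, which corresponds to $r=0$. If $A=\infty$ or $B=\infty$, then $f$ is a shifted exponential density, which by isotropy must be $e^{-(x+1)}$ on $(-1,\infty)$ or its mirror image, matching $r=\pm\infty$ in \eqref{r=0 or pm infty}. Both $A$ and $B$ cannot be infinite, since $e^{hx}$ is not integrable on $\R$.

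For uniqueness, the map $f\mapsto r$ is determined by $f$: it is $\pm h(A+B)$ for finite endpoints and $0$ or $\pm\infty$ in the special cases, and these cases are mutually exclusive. Hence at most one $r$ satisfies \eqref{eq:par1}. More simply, I can note that $\h$ is injective on $\oR$ after checking that $\sqrt{u(r)}/|e^r-1|$ together with its sign determines $r$. Injectivity of $r$ as a function of $f$ is the cleaner route.

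For the converse, I will check by the same integrals that $f_{\A(r),\B(r),\c(r),\h(r)}$ is a log-concave probability density with mean $0$ and variance $1$. It is also l.s.c.\ because the indicator is on an open interval. The main obstacle is purely algebraic: matching the simplified closed forms, with the correct sign conventions, to \eqref{eq:A,B,c,h(r)}. I would carry this out symbolically, in the spirit of the paper's Mathematica verification.
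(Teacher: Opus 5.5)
Your proof is correct, but it takes a different route from the paper.

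\textbf{The paper's route.} It solves the moment equations directly:
\begin{itemize}
\item it first shows $AB>1$ by strict convexity;
\item it treats $m_1=0=m_2-m_0$ as a homogeneous linear system in $e^{-Ah}$ and $e^{Bh}$, and the vanishing determinant gives $h=(A-B)/(AB-1)$;
\item with $a:=Ah+1$ and $b:=1-Bh$, the condition $m_1=0$ becomes $ae^{-a}=be^{-b}$, which it parametrizes by $r:=b-a$;
\item it then solves for $A,B,h,c$ algebraically.
\end{itemize}
Since $b-a=-h(A+B)$, this confirms your identification of the paper's $r$.

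\textbf{Your route.} You use affine invariance. The variable $Y=h(X+A)$ has the truncated exponential law between $0$ and $\rho:=h(A+B)$, which depends on $\rho$ alone, and isotropy merely selects the standardization. This explains why there is exactly one degree of freedom. It also gives existence, uniqueness and the converse from a single construction. Uniqueness holds because $-\h(r)(\A(r)+\B(r))=r$ by construction, and the converse, which the paper dismisses as ``direct calculation'', comes for free. Positivity of $u(\rho)$ for $\rho\ne0$ is automatic too, since $u(\rho)=(e^\rho-1)^2\Var Y$. The paper's computation, in exchange, produces explicit relations such as $h=(A-B)/(AB-1)$ and $A,B>1$ along the way.

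\textbf{A slip to fix.} The standardization should be $X=(Y-\E Y)/h$, not $(Y-\E Y)/\sqrt{\Var Y}$. Here $h=\sqrt{u(\rho)}/(e^\rho-1)$ carries the sign of $\rho$. With $\sqrt{\Var Y}$ in the denominators, your formulas for $A$ and $B$ come out negative when $\rho<0$.

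With $h$ in the denominators you get $A=s(\rho)/\sqrt{u(\rho)}$ and $B=\si(\rho)/\sqrt{u(\rho)}$. The identities
\[
u(-\rho)=e^{-2\rho}u(\rho),\qquad \si(-\rho)=e^{-\rho}s(\rho),\qquad s(-\rho)=e^{-\rho}\si(\rho)
\]
turn these, together with $h$, into $(\A,\B,\h)(-\rho)$. Since $\c$ is even, the normalizing constant is also $c=\c(-\rho)$. This is the precise content of the ``sign convention'' you allude to.
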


Theorem~\ref{th:par1} will be proved in Section~\ref{param-proofs}.

The set $\F_{2,\infty}$ also admits an explicit one-parameter parametrization, which in this latter case is much easier to obtain.  

\begin{theorem}\label{th:par2}
Take any $f\in\F_{2,\infty}$, so that 
\begin{equation}\label{eq:f_{c,u,h,k}}
	f(x)=\tf_{c,u,h,k}(x):=c e^{h(x-u)}1(-\infty<x\le u)+c e^{-k(x-u)}1(u<x<\infty)
\end{equation}
for all real $x$, where 
\begin{equation}\label{eq:c,u,h,k}
\begin{gathered}
	\text{$\{c,h,k\}\subset(0,\infty)$, $u\in\R$.} 
\end{gathered}	
\end{equation} 
Then $k>1$, $h>1$,   
\begin{equation}\label{eq:par2}
c=\frac{hk}{h+k},\quad u=\frac1h-\frac1k,\quad \frac1{h^2}+\frac1{k^2}=1,
\end{equation}
so that the functions $f=\tf_{c,u,h,k}\in\F_{2,\infty}$ can be uniquely parametrized by $h\in(1,\infty)$ or by $k\in(1,\infty)$.
Vice versa, if \eqref{eq:par2} holds, then $\tf_{c,u,h,k}\in\F_{2,\infty}$. 
\end{theorem}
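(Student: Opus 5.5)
The plan is to impose the three moment conditions on $f=\tf_{c,u,h,k}$ and solve them directly. The function $f$ is a density of a measure in $\L$ exactly when it integrates to $1$, has mean $0$ and has second moment $1$; its log-concavity is automatic, since $\log f=\min\big(h(x-u),-k(x-u)\big)+\log c$ is concave.

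First shift the variable: write $X=u+Y$, where $Y$ has density $c e^{hy}1(y\le0)+c e^{-ky}1(y>0)$. For this shifted density the moments are elementary:
\begin{equation*}
\int f=c\Big(\frac1h+\frac1k\Big),\qquad \E Y=c\Big(\frac1{k^2}-\frac1{h^2}\Big),\qquad \E Y^2=2c\Big(\frac1{h^3}+\frac1{k^3}\Big).
\end{equation*}
The conditions then translate as follows.
\begin{itemize}
\item The normalization $\int f=1$ gives $c=hk/(h+k)$.
\item Substituting this $c$ into $\E Y$ gives
\begin{equation*}
\E Y=\frac{hk}{h+k}\cdot\frac{h^2-k^2}{h^2k^2}=\frac{h-k}{hk}=\frac1k-\frac1h .
\end{equation*}
So $\E X=u+\E Y=0$ is equivalent to $u=\frac1h-\frac1k$.
\item Finally $\E X^2=1$ is equivalent to $\Var Y=1$. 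Setting $a=1/h$ and $b=1/k$, we have $c=1/(a+b)$, and
\begin{equation*}
\E Y^2=\frac{2(a^3+b^3)}{a+b}=2(a^2-ab+b^2).
\end{equation*}
Hence
\begin{equation*}
\Var Y=2a^2-2ab+2b^2-(a-b)^2=a^2+b^2 .
\end{equation*}
So the variance condition is exactly $\frac1{h^2}+\frac1{k^2}=1$.
\end{itemize}

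The positivity of $h$ and $k$ together with $a^2+b^2=1$ forces $a,b\in(0,1)$, that is, $h>1$ and $k>1$. Moreover, $k=h/\sqrt{h^2-1}$ is a strictly decreasing bijection of $(1,\infty)$ onto itself. Once $h$ and $k$ are fixed, $c$ and $u$ are determined by the first two relations. Hence the map $h\mapsto(c,u,h,k)$ parametrizes $\F_{2,\infty}$ uniquely, and so does $k$ by symmetry.

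For the converse, suppose \eqref{eq:par2} holds. The same computations run backwards show that $\tf_{c,u,h,k}$ is a probability density with mean $0$ and variance $1$. It is log-concave and lower semi-continuous; the l.s.c.\ property holds because $f$ is in fact continuous, as both pieces equal $c$ at $u$. It is also non-truncated two-piece exponential, so it lies in $\F_{2,\infty}$.

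One point needs care: the representation \eqref{eq:f_{c,u,h,k}} of a given $f\in\F_{2,\infty}$ must be unique, so that uniqueness of the parameter is meaningful. This holds here. The strict inequalities $h,k>0$ make $u$ the unique maximizer of $f$, and $c=f(u)$; then $h$ and $k$ are the slopes of $\log f$ on either side of $u$. There is no real obstacle in this argument; the only step needing attention is the algebraic simplification of the variance to $a^2+b^2$.
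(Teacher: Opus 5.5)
Your proof is correct and follows essentially the same route as the paper. Both compute the zeroth, first and second moments of $\tf_{c,u,h,k}$ and solve successively for $c$, for $u$, and for the relation $1/h^2+1/k^2=1$; your shift $Y=X-u$ and the substitution $a=1/h$, $b=1/k$ simply make the variance simplification a little tidier than the paper's direct computation of $m_2/m_0$.
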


Theorem~\ref{th:par2}, too, will be proved in Section~\ref{param-proofs}. 

\section{Statements of the main results}\label{main}

Before stating Theorem~\ref{th:df bounds} below, let us introduce additional definitions and state auxiliary results. 

For a function $R$, let us say that $R$ is $\u$ (respectively, $R$ is $\d$) on an interval if $R$ is increasing (respectively, decreasing) on the interval. Let us say that $R$ is $\u\d$ on an interval $I\subseteq\oR$ if for some point $y$ in the interior of $I$ the function $R$ is increasing on $I\cap[-\infty,y]$ and decreasing on $I\cap[y,\infty]$. Symbols $\d\u$, $\d\u\d$, etc. will be used in a similar way. Increase and decrease are always understood in the strict sense in this paper. 
The strings such as $\d\u\d$ will be referred to as the monotonicity patterns. 

Similarly defined will be sign patterns; for instance, we will say that $R$ is $+-$ on an interval $I\subseteq\oR$ if, for some point $y$ in the interior of $I$, the function $R$ is $>0$ on $I\cap[-\infty,y)$ and $<0$ on $I\cap(y,\infty]$. The strings such as $+-$ will be referred to as the sign patterns. 

\begin{lemma}\label{lem:B}
The function $\B$ is continuously increasing on $\oR$ from $\B(-\infty)=1$ to $\B(\infty)=\infty$. Moreover, $\B'>0$ on $\R$. 
\end{lemma}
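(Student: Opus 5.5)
We want $\B(r)=s(r)/\sqrt{u(r)}$ with $s(r)=e^r(r-1)+1$ and $u(r)=e^{2r}-e^r(r^2+2)+1$; the boundary values $\B(-\infty)=1$, $\B(0)=\sqrt3$, $\B(\infty)=\infty$ are already recorded in \eqref{r=0 or pm infty}. The plan has three steps. Show that $\B$ is continuous on $\oR$ (removable singularity at $r=0$). Then prove $\B'>0$ on $\R\setminus\{0\}$ by an explicit sign analysis of an exponential polynomial. Finally check that $\B'(0)>0$ via a Taylor expansion.

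For continuity, note that $s$ and $u$ are smooth. At $r=0$ we have $s(r)=r^2/2+r^3/3+\dots$ and $u(r)=r^4/12+\dots$ (a direct expansion gives $u(r)=\tfrac{r^4}{12}+\tfrac{r^5}{12}+\dots$). Hence $\sqrt{u(r)}=\tfrac{r^2}{2\sqrt3}\sqrt{1+r+O(r^2)}$, and $\B$ extends analytically through $0$ with $\B(0)=\sqrt3$. So $\B$ is $C^1$ on $\R$. Continuity at $\pm\infty$ is just the stated limits. Since $u>0$ on $\R\setminus\{0\}$ (Remark~\ref{rem:e-poly}), for $r\ne0$ we compute
\[
\B'(r)=\frac{2s'(r)u(r)-s(r)u'(r)}{2u(r)^{3/2}},
\]
so it suffices to show that
\[
D(r):=2s'(r)u(r)-s(r)u'(r)>0 \quad\text{for } r\ne0.
\]
Here $s'(r)=re^r$ and $u'(r)=2e^{2r}-e^r(r^2+2r+2)$. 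Expanding, $D(r)=e^r\,P(r)$, where $P$ is a combination of $e^{2r}$, $e^r$ and $1$ with polynomial coefficients. A preliminary computation gives
\[
P(r)=e^{2r}(2-2r)+e^r\bigl(r^3-r^2+2r-4\bigr)...
\]
These coefficients will be checked carefully (in Mathematica, as the paper does elsewhere). In any case $P$ has the form $p_2(r)e^{2r}+p_1(r)e^r+p_0(r)$. Moreover $P$ vanishes to high order at $0$, since $\B'(0)$ is finite and $u\sim r^4/12$; this forces $D=O(r^6)$.

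To show $P>0$ for $r\ne0$, I will use the standard differentiation trick for exponential polynomials. Divide by $e^{r}$ (or by $e^{2r}$), differentiate repeatedly to kill polynomial factors, and at each stage track the sign pattern, in the $\u\d$ / $+-$ language introduced above. At each stage I verify that the derivative of the appropriate order has a definite sign on $(-\infty,0)$ and on $(0,\infty)$. Then I integrate back using the vanishing of the lower derivatives at $r=0$. This gives $P>0$ on each side, with limits at $\pm\infty$ used as a check.

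The main obstacle is this sign analysis: the cancellation at $0$ to order about $6$ makes a naive bound fail near $0$. Near $0$ I will use the Taylor expansion, whose leading coefficient should be positive. Away from $0$ I will use the iterated-derivative reduction, possibly splitting into $r<0$ and $r>0$, with a cruder estimate for large $|r|$ where one exponential term dominates.

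Finally, $\B'(0)$ equals the $r^1$ coefficient of the analytic expansion of $\B$. From the expansions above this is positive: numerically $\B(r)\approx\sqrt3(1+\tfrac{2}{3}r+\dots)(1-\tfrac r2+\dots)$, so the coefficient is about $\sqrt3/6>0$, to be verified exactly. Hence $\B'>0$ on all of $\R$. Strict increase on $\oR$ then follows from continuity together with the stated endpoint values.
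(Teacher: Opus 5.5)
Your plan is essentially the paper's proof. The paper reduces $\B'>0$ on $\R\setminus\{0\}$ to positivity of the E-polynomial $P=\B'\cdot 2u^{3/2}/e^r$, certifies this with Mathematica's \texttt{Reduce} (and alternatively by repeated differentiation and division by $e^r$ in Appendix~\ref{app:B}), and records $\B'(0)=1/(2\sqrt3)=\sqrt3/6$; for $r<0$ it first passes to $e^{2r}P(-r)$, because on $(-\infty,0)$ intermediate functions such as $16e^r-6$ change sign, so there you would have to track sign patterns using the limits at $-\infty$ rather than find the definite signs you anticipate.

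Do fix your tentative formula: the correct one is $P(r)=2e^{2r}-e^r(r^3-r^2+4r+4)+r^2+4r+2$, whose Taylor expansion at $0$ begins with $r^6/72$.
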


Lemma~\ref{lem:B} will be proved in Section~\ref{main thms proofs}. 

%\begin{remark*}
%\pageref{mathematica}
%\end{remark*}

\begin{remark}\label{rem:iB}
In view of Lemma~\ref{lem:B}, the function $\R\ni r\mapsto\B(r)\in(1,\infty)$ has a unique continuously increasing inverse, which we will denote by $\B^{-1}$. For real $b>0$, let then 
\begin{equation}\label{eq:r_b}
	r_b:=\begin{cases}
	\B^{-1}(b) &\text{ if } b>1,\\ 
	-\infty &\text{ if } b\le1. 
	\end{cases}
\end{equation}
Then the function $(0,\infty)\ni b\mapsto r_b\in\oR$ is nondecreasing on $(0,\infty)$ and continuously increasing on $[1,\infty)$ from $r_1=-\infty$ to $r_{\infty-}=\infty$. 
\end{remark}

For real $b>0$ and $r\in\oR$, let 
\begin{equation}\label{eq:p_{1,b}}
	p_1(r):=p_{1,b}(r):=\int_b^\infty f_{\A(r),\B(r),\c(r),\h(r)}(x)\,dx. 
\end{equation}
Note that 
\begin{equation}\label{eq:p_{1,b}=}
	p_1(r)=p_{1,b}(r)=
	\frac1{e^r-1}\,\Big(-1+\exp\frac{\big(s(r)-b\sqrt{u(r)}\big)_+}{e^r-1}\Big)
\end{equation}
for real $r\ne0$, with $u(r)$ and $s(r)$ as defined in \eqref{eq:u,s,si}; as usual, $z_+:=\max(0,z)$ for real $z$.  
The values of $p_{1,b}$ for $r\in\{-\infty,0,\infty\}$ can be obtained from \eqref{eq:p_{1,b}=} by continuity, % checking\\r=0 or pm infty.nb 
so that for all real $b>0$  
\begin{equation}\label{eq:p_{1,b} special}
	p_{1,b}(-\infty)=1-e^{-(1-b)_+}, \quad 
	p_{1,b}(0)=\frac{(\sqrt3-b)_+}{2\sqrt3}, \quad 
	p_{1,b}(\infty)=e^{-1-b};
\end{equation}
to verify the second equality in \eqref{eq:p_{1,b} special}, one may note that $s(r)\sim r^2/2$ and $u(r)\sim r^4/12$ as $r\to0$. 

\begin{proposition}\label{prop:F1}
The function $p_1$ %, defined by \eqref{eq:p_{1,b}}, 
is $\u\d$ on $[r_b,\infty]$. 
Also, $p_1(r)=0$ if $-\infty\le r<r_b$. 
So, $\sup_{r\in\oR}p_1(r)$ is attained at a unique value of $r$, which is actually in $\R$, and  
this unique maximizer of $p_1(r)$ in $r$ can be effectively computed to any precision. 
\end{proposition}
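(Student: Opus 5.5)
The support of $f_{\A(r),\B(r),\c(r),\h(r)}$ is $(-\A(r),\B(r))$, so $p_1(r)=0$ exactly when $\B(r)\le b$. By Lemma~\ref{lem:B} and Remark~\ref{rem:iB}, this happens iff $r\le r_b$. This gives the second claim. It also shows that $p_1(r_b)=0$ when $b>1$; when $b\le1$ we have $r_b=-\infty$ and $p_1(-\infty)=1-e^{-(1-b)}\ge0$. Algebraically, $\B(r)>b$ is the same as $s(r)-b\sqrt{u(r)}>0$, because $\B=s/\sqrt u$. So on $(r_b,\infty)$ the positive part in \eqref{eq:p_{1,b}=} can be dropped, and there
\[
p_1(r)=\frac{1}{e^r-1}\Big(\exp\frac{s(r)-b\sqrt{u(r)}}{e^r-1}-1\Big),
\]
which is real-analytic in $r$ (including at $r=0$, after removing the singularity). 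The values at $r=\infty$ are given by \eqref{eq:p_{1,b} special}.

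For the $\u\d$ pattern I will work with $d(r):=p_1'(r)$ on $(r_b,\infty)$ and show that it has sign pattern $+-$ there. The endpoint behaviour is as follows. At $r_b$ (for $b>1$), $p_1$ rises from $0$, so $p_1'>0$ just to the right of $r_b$. For $b\le1$ I need to check $p_1'>0$ near $-\infty$ by an asymptotic expansion. As $r\to\infty$, $p_1(r)\to e^{-1-b}$, and I expect the approach to be from above, so $p_1'<0$ eventually; I will confirm this by expanding in $e^{-r}$.

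It then remains to show that $d$ has exactly one zero. Write $E:=\exp\frac{s-b\sqrt u}{e^r-1}$ and $g:=\frac{s-b\sqrt u}{e^r-1}$. Then
\[
d=\frac{-e^r(E-1)+(e^r-1)Eg'}{(e^r-1)^2},
\]
so the sign of $d$ equals the sign of $D(r):=(e^r-1)g'(r)-e^r(1-e^{-g(r)})$. Since $b$ enters linearly through $\sqrt u$, one plan is to solve $D=0$ for $b$ instead of $r$. I would try to show that the equation $D(r)=0$ defines $b$ as a strictly monotone function $\beta(r)$ of $r$. Then each $b$ corresponds to exactly one critical point, and its sign change follows from the endpoint analysis. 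An alternative is to write $D$ as a function of $(r,g)$, use that $g$ is monotone in $r$ (which follows from $\B'>0$ together with monotonicity of $\sqrt u/(e^r-1)$), and prove $D$ is $+-$ by a derivative-sign chain, differentiating until the sign is evident. The leftover inequalities are in $e^r$, $r$ and $\sqrt u$; after isolating $\sqrt u$ and squaring, these become exponential polynomials, which can be handled in the manner of Remark~\ref{rem:e-poly} and verified symbolically in Mathematica.

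I expect this uniqueness-of-zero step to be the main obstacle. The transcendental term $e^{-g}$ mixes with the square root, and the branch behaviour near $r=0$ needs care. Once the $+-$ pattern of $d$ is established, the supremum over $\oR$ is attained at the unique zero $r^*\in(r_b,\infty)\subset\R$. This uses that $p_1$ is larger at $r^*$ than at both endpoints, since $p_1$ is strictly increasing up to $r^*$ and strictly decreasing after it, and $p_1=0$ to the left of $r_b$. Because $d$ changes sign exactly once, $r^*$ can be computed to any precision by bisection on the sign of $D$.
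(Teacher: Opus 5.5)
You handle the routine parts correctly: $p_1=0$ exactly for $r<r_b$ (with $p_1(r_b)=0$ when $b>1$), the analytic formula on $(r_b,\infty)$, the reduction of the sign of $p_1'$ to the sign of $D$, and the step from a $+-$ pattern of $p_1'$ to a unique real maximizer computable by bisection. But the first sentence of the proposition is the whole difficulty: you must show that $D$ changes sign exactly once on $(r_b,\infty)$ for every $b>0$. You leave this open, and neither route you sketch works as stated.

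First, $D$ is not linear in $b$. With $R_1:=s/(e^r-1)$ and $R_2:=\sqrt u/(e^r-1)$ one has $g=R_1-bR_2$, so $b$ also appears in $e^{-g}=e^{-R_1+bR_2}$. The equation $D=0$ is therefore transcendental in $b$. There is no explicit $\beta(r)$ whose monotonicity could be checked, and you give no argument that the zero set of $D$ is a monotone graph.

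Second, $g$ is not monotone in $r$. Write $g=R_2(\B-b)$ with $R_2<0$ on $(-\infty,0)$; there $\B'>0$ and $R_2'\ge0$ give nothing. Concretely, for $1<b<\sqrt3$ you have $r_b<0$, $g(r_b)=0$, $g(r)\to0$ as $r\to0$, and $g<0$ on $(r_b,0)$.

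The missing idea is how to remove the transcendental term. The paper writes $p_1=F/G$ with $F:=e^{g}-1$ and $G:=e^r-1$, and applies l'Hospital-type rules for monotonicity. The derivative ratio $\rho:=F'/G'=e^{g-r}g'$ satisfies $\rho'=e^{g-r}(g''+g'^2-g')$. So the sign of $\rho'$ involves only $r$, $e^r$, $\sqrt u$ and $b$, and after clearing $\sqrt u$ it becomes a sign question for E-polynomials. The paper then proceeds in three steps:
\begin{enumerate}[(i)]
\item It shows $(D\rho)'<0$ on $[r_b,\infty)$: via $C_1,C_2<0$ for $r>0$, and for $r<0$ via the bound $b\le\B(r)$, i.e.\ $(DD\rho\B)<0$.
\item It determines the sign of $D\rho$ at $r_b$ through $D\rho\B$, with threshold $b_*=\B(r_*)$. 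Hence $\rho$ is $\u\d$ or $\d$ on $[r_b,\infty)$.
\item It transfers this pattern to $p_1$ in five cases of $b$, using $p_1(r_b)=0$, the signs of $p_1'$ near $\pm\infty$, and $\rho'(0\pm)=(b^2-1)/12$.
\end{enumerate}
Without this, or a substitute that actually controls the zeros of $D$, your argument proves the second sentence but leaves the first unproved, and the third depends on it.
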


This proposition will be proved in Section~\ref{main thms proofs}.

\begin{lemma}\label{lem:b_{**}}
For real $b\ge1/\sqrt2$, let 
\begin{equation}\label{eq:rat}
	\rat(b):=\frac{\frac{1-\sqrt{2 b^2-1}}{2} e^{\sqrt{2 b^2-1}-1}}{e^{-1-b}}. 
\end{equation}
Then the ratio function $\rat$ is continuously decreasing on the interval $[1/\sqrt2,1)$, from $e^{1/\sqrt2}/2=1.0140\ldots$ to $0$. Therefore, there is a unique root $b_{**}\in[1/\sqrt2,1)$ of the equation $\rat(b)=1$, and 
\begin{equation}\label{eq:b_**}
	b_{**}=0.726749\ldots. 
\end{equation}
\end{lemma}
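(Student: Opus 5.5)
Write $t:=\sqrt{2b^2-1}$, so that $t$ increases continuously from $0$ to $1$ as $b$ increases over $[1/\sqrt2,1]$, and $b=\sqrt{(1+t^2)/2}$. Then
\begin{equation*}
	\rat(b)=\tfrac{1-t}{2}\,e^{t+b}=\tfrac{1-t}{2}\exp\Big(t+\sqrt{(1+t^2)/2}\Big)=:g(t),\qquad t\in[0,1).
\end{equation*}
The plan is to prove that $g$ is continuously decreasing on $[0,1)$. Monotonicity of $\rat$ on $[1/\sqrt2,1)$ will then follow from composition with the increasing map $b\mapsto t$. The endpoint values come directly from this formula. At $b=1/\sqrt2$ we have $t=0$ and $g(0)=\tfrac12 e^{1/\sqrt2}$. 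As $b\uparrow1$ we have $t\uparrow1$, so $g(t)\to0$ because of the factor $1-t$. Continuity is clear.

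For monotonicity, I take the logarithmic derivative on $(0,1)$, where $g>0$:
\begin{equation*}
	(\ln g)'(t)=-\frac1{1-t}+1+\frac{t}{\sqrt{2(1+t^2)}}.
\end{equation*}
I bound the last term by $t/\sqrt{2(1+t^2)}\le t/\sqrt2\le t$ for $t\in[0,1)$. This gives
\begin{equation*}
	(\ln g)'(t)\le 1+t-\frac1{1-t}=\frac{(1-t^2)-1}{1-t}=-\frac{t^2}{1-t}<0
\end{equation*}
for $t\in(0,1)$. Hence $g$ is strictly decreasing on $[0,1)$, and therefore $\rat$ is continuously decreasing on $[1/\sqrt2,1)$ from $e^{1/\sqrt2}/2$ to $0$.

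Next, $e^{1/\sqrt2}/2=1.0140\ldots>1$ and the limit at $b\uparrow1$ is $0<1$. By the intermediate value theorem together with strict monotonicity, the equation $\rat(b)=1$ has exactly one root $b_{**}\in[1/\sqrt2,1)$, and in fact $b_{**}\in(1/\sqrt2,1)$.

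The numerical value \eqref{eq:b_**} is obtained by bisection or Newton's method on the strictly monotone function $\rat$, for example in Mathematica. To certify the stated digits, it suffices to check the sign of $\rat-1$ at $b=0.726749$ and at $b=0.72675$.

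I expect no real obstacle. The only step needing care is the derivative inequality, and the crude bound above already settles it.
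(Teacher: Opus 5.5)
Your proof is correct and is essentially the paper's argument: the paper differentiates directly in $b$ and obtains the exact formula $\rat'(b)=\frac{1-2b-\sqrt{2b^2-1}}{2}\,e^{b+\sqrt{2b^2-1}}$, which is negative because $b\ge1/\sqrt2>1/2$. Your exact log-derivative simplifies to $(\ln g)'(t)=\frac{t\,(1-t-2b)}{2b\,(1-t)}$, which has the same sign factor, so your bound $t/\sqrt{2(1+t^2)}\le t$ is valid but not needed.
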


Lemma~\ref{lem:b_{**}} will be proved in Section~\ref{main thms proofs}.

%\begin{proof}[Proof of Lemma~\ref{lem:b_{**}}]
%Note that, for $b\in[1/\sqrt2,1)$,   
%\begin{equation}
%	\rat(b)=\frac{1-\sqrt{2 b^2-1}}{2}\, e^{b+\sqrt{2 b^2-1}}
%\end{equation}
%and 
%\begin{gather*}
%\rat'(b)=\frac{1-2b-\sqrt{2 b^2-1}}{2} \, e^{b+\sqrt{2 b^2-1}}<0, 
%\end{gather*}
%so that the continuous function $\rat$ is decreasing on $[1/\sqrt2,1)$, from $e^{1/\sqrt2}/2=1.0140\ldots$ to $0$. 
%\end{proof}

Let now 
\begin{equation}\label{eq:J_2}
	J_2(b):=\begin{cases}
	e^{-1-b}&\text{ if }0<b\le b_{**}, \\ 
	\frac{1-\sqrt{2 b^2-1}}{2} e^{\sqrt{2 b^2-1}-1} &\text{ if }b_{**}\le b<1, \\ 
	0&\text{ if }b\ge1,
	\end{cases}
\end{equation}
with $b_{**}$ as defined in Lemma~\ref{lem:b_{**}}.  
%\begin{equation}\label{eq:s_3}
%	s_3(b):=\frac{1+\sqrt{2 b^2-1}}{2 b} 
%\end{equation} 
%and 
%\begin{equation}\label{eq:b_**}
%	b_{**}=0.726749\ldots
%\end{equation} 
%is the unique root $b$ in the interval $[1/\sqrt2,1)$ of the equation 
%\begin{equation}\label{eq:rat}
%	\rat(b):=\frac{\frac{1-\sqrt{2 b^2-1}}{2} e^{\sqrt{2 b^2-1}-1}}{e^{-1-b}}=1;
%\end{equation}
%the ratio function $\rat$ is continuously decreasing on the interval $[1/\sqrt2,1)$, from $e^{1/\sqrt2}/2=1.0140\ldots$ to $0$. 

\begin{theorem}\label{th:df bounds}
For real $b$, let 
\begin{equation}\label{eq:sups,infs}
\begin{aligned}
	S(b):=\sup_{\mu\in\L}\mu([b,\infty))=\sup_{f\in\F}\int_b^\infty f, \\ 
	I(b):=\inf_{\mu\in\L}\mu([b,\infty))=\inf_{f\in\F}\int_b^\infty f.  
\end{aligned}	
\end{equation}
The suprema and infima in \eqref{eq:sups,infs} are all attained. Moreover, 
\begin{equation}\label{eq:S(b)=}
	S(b)=\left\{
\begin{alignedat}{2}
	&\max_{r\in\oR}p_{1,b}(r) 
	=\max_{r\ge r_b}p_{1,b}(r)=\max_{r>r_b}p_{1,b}(r) 
	&&\quad\text{ if }b>0, \\
	&1-\frac 1e &&\quad\text{ if }b=0, \\ 
	&1-J_2(-b) &&\quad\text{ if }b<0  
\end{alignedat}	
	\right.
\end{equation}
and, for all real $b$, 
\begin{equation}\label{eq:I(b)=}
	I(b)=1-S(-b),
\end{equation}
where $p_{1,b}(r)$ is as %defined in \eqref{eq:p_{1,b}} 
expressed in \eqref{eq:p_{1,b}=} and \eqref{eq:p_{1,b} special} 
and $J_2(b)$ is as defined in \eqref{eq:J_2}. 
We also have % these particular values/asymptotic: 
\begin{equation}\label{eq:S(0),S(1)}
%S(0)=p_{1,0}(-\infty)=1-\frac 1e,\quad	
S(1)=p_{1,1}(0)=\frac12-\frac1{2\sqrt3} 
\end{equation}
and % end of \\exp-bound\\2.nb
\begin{equation}\label{eq:S(b) sim}
	S(b)\underset{b\to\infty}\sim e^{-1-b}.
\end{equation}
A maximizer for $S(0)$
%case of $p_{1,b}(r)$ with $b=0$ and $r=-\infty$ in \eqref{eq:S(b)=} corresponds to 
is the ``negative'' exponential p.d.f.\ given by the formula 
\begin{equation}\label{eq:f_-infty}
f(x)=f_{\A(-\infty),\B(-\infty),\c(-\infty),\h(-\infty)}(x)
=e^{x-1}\,1(-\infty<x<1)	
\end{equation}
for real $x$ (recall here \eqref{eq:f_{A,B,c,h}} and \eqref{r=0 or pm infty}). 

The case of $p_{1,b}(r)$ with $b=1$ and $r=0$ in \eqref{eq:S(0),S(1)} corresponds to the uniform distribution over the interval $(-\sqrt3,\sqrt3)$. 
\end{theorem}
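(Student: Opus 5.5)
By Corollary~\ref{cor:b} with $\ka=\mp1$, the extremal values $S(b)$ and $I(b)$ are attained at measures in $\L_1\cup\L_{2,\infty}$. We have $S(b)=-\inf_\mu\int(-1_{[b,\infty)})\,d\mu$, and $I(b)$ is the infimum with $\ka=1$. So the whole problem reduces to maximizing $\int_b^\infty f$ over two explicit one-parameter families: $\F_1$, parametrized by $r\in\oR$ through Theorem~\ref{th:par1}, and $\F_{2,\infty}$, parametrized by $h\in(1,\infty)$ through Theorem~\ref{th:par2}. The relation \eqref{eq:I(b)=} follows from the reflection $x\mapsto -x$, which preserves $\L$: for l.s.c.\ log-concave densities $\mu(\{b\})=0$, so $\mu([b,\infty))=1-\mu([-b',\dots))$ with the boundary point irrelevant. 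Hence $I(b)=1-S(-b)$, and it suffices to compute $S$.

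\textbf{Case $b>0$.} For $\F_1$ the tail is $p_{1,b}(r)$ as in \eqref{eq:p_{1,b}=}, and Proposition~\ref{prop:F1} gives the maximum over $r$, attained at some $r>r_b$. For $\F_{2,\infty}$ with $u=1/h-1/k$, I will compute the tail explicitly. If $b\ge u$, it equals $\frac{h}{h+k}e^{-k(b-u)}$. The plan is to show that this never exceeds $\max_r p_{1,b}(r)$. Using $1/h^2+1/k^2=1$, set $1/h=\cos\thh$ and $1/k=\sin\thh$, so that the tail becomes a function of one variable. The comparison should go through the endpoint $r=\infty$: there $p_{1,b}(\infty)=e^{-1-b}$, which is the limit $h\to\infty$, $k\to1$ of the two-piece family. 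I would therefore show that the $\F_{2,\infty}$ tail is monotone in $\thh$ (or bounded by $e^{-1-b}$) for $b>0$, so that its supremum is the boundary value $e^{-1-b}\le\max_r p_{1,b}$. The case $b<u$ needs a separate check; there the tail is $1-\frac{k}{h+k}e^{-h(u-b)}$. The statement that the maximum is at $r>r_b$ and equals $\max_{r\ge r_b}$ then follows from Proposition~\ref{prop:F1}, since $p_1=0$ for $r<r_b$.

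\textbf{Case $b<0$.} Write $S(b)=1-\inf\mu((-\infty,b))=1-\inf\mu([|b|,\infty))$ after reflection. So we need $\inf$ of the right tail at $\beta=-b>0$ over both families, and we must show that it equals $J_2(\beta)$.
\begin{itemize}
\item On $\F_1$, the infimum of $p_{1,\beta}(r)$ equals $0$ when $\beta\ge1$, attained at $r=-\infty$ since $\B(-\infty)=1$. For $\beta<1$, every $r$ gives positive mass, and by the $\u\d$ shape from Proposition~\ref{prop:F1} the infimum is at an endpoint: $p_{1,\beta}(-\infty)=1-e^{-(1-\beta)}$ or $p_{1,\beta}(\infty)=e^{-1-\beta}$.
\item On $\F_{2,\infty}$, minimizing $\frac{h}{h+k}e^{-k(\beta-u)}$ over $\thh$ by calculus should yield the critical point with $k$-value related to $\sqrt{2\beta^2-1}$. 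This produces the expression $\frac{1-\sqrt{2\beta^2-1}}2e^{\sqrt{2\beta^2-1}-1}$, valid for $\beta\ge1/\sqrt2$.
\end{itemize}
Comparing these candidates via Lemma~\ref{lem:b_{**}} (the ratio $\rat$ crosses $1$ at $b_{**}$) gives $J_2$. One also has to check that $1-e^{-(1-\beta)}$ is never the smallest candidate.

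\textbf{Remaining claims.}
\begin{itemize}
\item $b=0$: the maximizer is $r=-\infty$, giving $1-1/e$ and the density \eqref{eq:f_-infty}. I would check this directly against the two-piece family at $b=0$.
\item $S(1)$: by \eqref{eq:p_{1,b} special}, $p_{1,1}(0)=\frac{\sqrt3-1}{2\sqrt3}$. One needs $r=0$ to be the maximizer, i.e.\ $p_{1,1}'(0)=0$, verified by expansion near $r=0$.
\item Asymptotics \eqref{eq:S(b) sim}: as $b\to\infty$, the maximizer $r\to\infty$. Expanding \eqref{eq:p_{1,b}=} for large $r$ shows $\max_r p_{1,b}\sim p_{1,b}(\infty)=e^{-1-b}$.
\end{itemize}

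\textbf{Main obstacle.} I expect the hardest part to be the global comparison between the two families, namely showing that $\F_{2,\infty}$ never beats $\F_1$ for $b>0$, and identifying exactly which candidate wins for $b<0$. These are transcendental inequalities in two variables $(b,\thh)$. I would attack them by monotonicity and sign-pattern arguments in the style of the paper, reducing to polynomial–exponential inequalities checkable symbolically.
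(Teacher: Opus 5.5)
\paragraph{The gap: the comparison for $0<b<1$.} The problem is in the case $b>0$, where you compare $\F_{2,\infty}$ with $\F_1$ only through the endpoint $r=\infty$. Your claim that the $\F_{2,\infty}$ tail on the branch $u\le b$ is monotone, with supremum $e^{-1-b}$, holds only for $b\ge1$.

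With $s:=1/h=\cos\thh$ and $t:=1/k=\sqrt{1-s^2}$, that tail is $\frac{t}{s+t}e^{-(b+t-s)/t}$. Its $s$-derivative has the sign of $1-b(s+t)$. For $b\ge1$ this is negative, and the branch $u>b$ is empty, so your argument works there.

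But $s+t\downarrow1$ as $s\downarrow0$. So for every $b\in(0,1)$ the tail \emph{increases} away from the endpoint $s=0+$, whose limit is $p_{1,b}(\infty)=e^{-1-b}$. Hence $\sup_{f\in\F_{2,\infty}}\int_b^\infty f>e^{-1-b}$ for all $b\in(0,1)$. The two sub-ranges behave differently:
\begin{itemize}
\item For $b\le1/\sqrt2$, the branch increases up to the junction $u=b$. The junction value is dominated by the other branch, whose tail $1-\frac{k}{h+k}e^{-h(u-b)}$ increases to $1-e^{b-1}=p_{1,b}(-\infty)$. So here the endpoint $r=-\infty$ rescues you.
\item For $b\in(1/\sqrt2,1)$, however, there is an interior local maximum at $s_2(b)=\frac{1-\ga}{2b}$, $\ga:=\sqrt{2b^2-1}$, with value $\frac12(1+\ga)e^{-1-\ga}$. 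For $b$ near $1$ this beats \emph{both} endpoint members of $\F_1$. At $b=0.85$ it equals $0.15737\ldots$, whereas $e^{-1.85}=0.15723\ldots$ and $1-e^{-0.15}=0.13929\ldots$.
\end{itemize}
So no argument through $r=\pm\infty$ alone can close the case $b>0$.

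\paragraph{What is needed, and two smaller points.} You need a lower bound on $S_1(b)=\max_r p_{1,b}(r)$ coming from an \emph{interior} member of $\F_1$. The paper uses the uniform density ($r=0$) together with the fact that $S_1$ is nonincreasing in $b$, as each $p_{1,b}(r)$ is. For $b\in(1/\sqrt2,1)$ this gives $S_1(b)\ge S_1(1)\ge p_{1,1}(0)=\frac12-\frac1{2\sqrt3}>\frac1{2e}\ge\frac12(1+\ga)e^{-1-\ga}$, where the last expression is decreasing in $b$; see the proof of Proposition~\ref{prop:F2}.

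First smaller point: at $b=0$, Proposition~\ref{prop:F1} is not available. So checking against $\F_{2,\infty}$ alone does not give $\sup_r p_{1,0}(r)\le 1-1/e$. Either use $\P(X\ge0)\ge1/e$ for mean-zero log-concave $X$ (Lov\'asz--Vempala, Lemma~5.4), as the paper does, or let $b\uparrow0$ in $S(b)=1-J_2(-b)$, using that $S$ is $1$-Lipschitz because $f\le1$.

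Second smaller point: for \eqref{eq:S(b) sim}, a large-$r$ expansion must be uniform over $r\ge r_b$. The paper gets this from $p_{1,b}(r)<e^{g_b(r)}/(e^r-1)$, with $g_b$ as in \eqref{eq:F,g,G}. The right side decreases in $r\ge r_b$ when $b>\sqrt3$, and at $r=r_b$ it equals $1/(e^{r_b}-1)\sim e^{-1-b}$, since $r_b-b\to1$.

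Your treatment of $S(1)$ and of $b<0$ matches the paper. For the infimum at $\beta=-b$ you should also note that, on the branch $u\ge\beta$, the tail is minimized at the junction $u=\beta$, so that branch adds nothing.
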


Theorem~\ref{th:df bounds} will be proved in Section~\ref{main thms proofs}.

\begin{remark}\label{rem:df bounds}
In view of \eqref{eq:S(b)=}, % and \eqref{eq:I_2}, 
we have explicit expressions for $S(b)$ with $b\le0$ in terms of elementary functions of $b$. 
In view of \eqref{eq:p_{1,b}=} and \eqref{eq:p_{1,b} special},
%\eqref{eq:S=S_1}, the definition of $S_1(b)$ in \eqref{eq:S_2 le S_1},  formulas \eqref{eq:p_1=} and \eqref{eq:F,g,G}, 
and because $p_{1,b}(r)=0$ if $b>1$ and $r\le r_b$, whereas $r_b=-\infty$ if $0<b\le1$, Proposition~\ref{prop:F1} %provides for an effective calculation of $S(b)$ for $b>0$ with an arbitrary precision
permits $S(b)$ to be computed to arbitrary precision for $b>0$ as well; see further details in Section~\ref{calculations}.  
So, in view of \eqref{eq:I(b)=}, we can effectively compute $S(b)$ and $I(b)$ for all real $b$. 
In view of \eqref{eq:S(0),S(1)} and \eqref{eq:S(b) sim}, we also have simple explicit expressions for $S(1)$ and hence for $I(-1)=1-S(1)$, as well as simple explicit asymptotics for $S(b)$ as $b\to\infty$ and hence for $1-I(b)=S(-b)$ as $b\to-\infty$. 
Moreover, Lemma~5.5(a) by Lovasz and Vempala \cite{lovasz-vempala} states that 
\begin{equation}\label{eq:f le1}
	\text{if $f\in\F$ then $f\le1$.} 
\end{equation}
So, the functions $S$ and $I$ are $1$-Lipschitz.
\qed
\end{remark}

Whereas, according to Remark~\ref{rem:df bounds}, $S(b)$ and $I(b)=1-S(-b)$ can be effectively computed for all real $b$, the expressions for $S(b)$ provided by %formula \eqref{eq:S(b)=} in 
Theorem~\ref{th:df bounds} are not quite explicit for $b\in(0,\infty)\setminus\{1\}$. So, the following simple exponential upper bound on $S(b)$ with the exact constant factor should be useful. 

\begin{theorem}\label{th:upper bound}
\begin{equation}\label{eq:C_*:=}
C_{\sup}:=\sup_{b\ge0}\frac{S(b)}{e^{-b}}=0.681100549\ldots<\infty,	
\end{equation}
so that $C_{\sup}$ is the best constant factor $C$ in the inequality 
\begin{equation}\label{eq:upper bound}
	S(b)\le C e^{-b}
\end{equation}
for all real $b\ge0$. The supremum in \eqref{eq:C_*:=} is attained at a certain real number $b_{\sup}=0.336266195\ldots$. Both $b_{\sup}$ and $C_{\sup}$ can be effectively computed to any precision. 
\end{theorem}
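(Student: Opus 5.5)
By Theorem~\ref{th:df bounds}, for $b>0$ we have $S(b)=\max_{r\in\oR}p_{1,b}(r)$, and $S(0)=1-1/e$. Therefore
\begin{equation*}
C_{\sup}=\sup_{b\ge0}\,\sup_{r\in\oR}e^{b}p_{1,b}(r)=\sup_{r\in\oR}\,\sup_{b\ge0}e^{b}p_{1,b}(r),
\end{equation*}
and the plan is to swap the order of the suprema. For fixed $r$ the inner problem is one-dimensional and explicit. The point $b=0$ is consistent with this, since $p_{1,0}(-\infty)=1-1/e=S(0)$ and $S$ is continuous ($1$-Lipschitz by Remark~\ref{rem:df bounds}).

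\textbf{Inner supremum.} Fix a real $r\ne0$ and write $a:=s(r)/(e^r-1)$ and $\be:=\sqrt{u(r)}/(e^r-1)=-\h(r)$. By \eqref{eq:p_{1,b}=}, for $b<\B(r)$ we get
\begin{equation*}
g(b):=e^{b}p_{1,b}(r)=\frac{e^{b}\big(e^{a-\be b}-1\big)}{e^r-1},
\end{equation*}
and $g(b)=0$ for $b\ge\B(r)$. Differentiating gives
\begin{equation*}
g'(b)\propto (1-\be)e^{a-\be b}-1 .
\end{equation*}
Hence the maximizer in $b$ is either explicit, $b^*(r)=\big(a-\ln\frac1{1-\be}\big)/\be$ when this lies in $(0,\B(r))$, or it is the endpoint $b=0$. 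Since $\h(r)\in(-1,1)$, the case split is governed by the sign of $1-\be$ and of $b^*$. Substituting back yields a closed form $G(r):=\sup_{b\ge0}g(b)$ in terms of elementary functions of $r$ on each regime. The limiting values $r\in\{-\infty,0,\infty\}$ are handled from \eqref{eq:p_{1,b} special}:
\begin{itemize}
\item $r=-\infty$ gives $e^b(1-e^{b-1})$, maximized at $b=1-\ln 2$ with value $e^{-\ln2}\cdot e^{1-\ln 2}\cdots$, roughly $0.68$;
\item $r=\infty$ gives $e^{-1}$;
\item $r=0$ gives $e^b(\sqrt3-b)/(2\sqrt3)$.
\end{itemize}
These give lower bounds for $C_{\sup}$ and show $C_{\sup}<\infty$. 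Finiteness in general follows from $G$ being continuous on the compact set $\oR$, with finite limits by the asymptotics in \eqref{eq:S(b) sim}.

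\textbf{Outer supremum.} We then maximize $G$ over $r\in\oR$. The value $0.6811\ldots$ is close to the $r=-\infty$ value, which suggests the maximizer is at a finite but strongly negative $r$. I would show that $G$ is $\u\d$ on $\oR$, or at least that $G'$ has a unique sign change, by the same kind of sign-pattern argument used for Proposition~\ref{prop:F1}: reduce $G'(r)$ to an expression whose sign equals that of an exponential polynomial in $r$, and certify its sign pattern via successive derivatives and the Mathematica checks. The unique root $r_{\sup}$ can then be computed by bisection to any precision. This gives
\begin{equation*}
C_{\sup}=G(r_{\sup}),\qquad b_{\sup}=b^*(r_{\sup}),
\end{equation*}
both effectively computable, with rigorous interval bounds giving the stated digits.

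\textbf{Main obstacle.} The main obstacle is the global unimodality of $G$ in $r$, equivalently excluding other local maxima of the two-variable function $(b,r)\mapsto e^bp_{1,b}(r)$ on $b\ge0$. A fallback that avoids proving unimodality is as follows. Bound $|\partial_r G|$ on compact pieces and use a certified grid search. Use the explicit asymptotics near $r=\pm\infty$ and $r=0$ to show that $G$ is strictly below $0.681$ outside a bounded interval. This reduces the problem to a finite verified computation.
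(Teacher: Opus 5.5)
\textbf{The gap is the outer maximization over $r$.} That step carries the whole content of the theorem, and the tool you propose for it does not apply.

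Your exchange of the two suprema and the explicit inner maximization in $b$ are correct, with two small fixes:
\begin{itemize}
\item $1-\be=1+\h(r)>0$ for every real $r$, so there is no case split on its sign.
\item The factor $e^b/(e^r-1)$ hidden in your ``$\propto$'' is negative for $r<0$, so monotonicity must be checked separately there. It does work out: your $g$ is $\u\d$ or $\d$ on $[0,\B(r))$ for either sign of $r$, and $b^*(r)<\B(r)$.
\end{itemize}

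By the envelope theorem, $G'(r)=\partial_r\RAT(b^*(r),r)$. Substituting $e^{a-\be b}=1/(1-\be)$ into \eqref{eq:DrRAT} and using \eqref{eq:d(b,r)} gives $\sign G'(r)=\sign\big((e^r-1)(\tb(r)-b^*(r))\big)$, with $\tb$ as in \eqref{eq:tb}. Since $b^*(r)=\big(a+\ln(1-\be)\big)/\be$ contains the logarithm of an expression involving $\sqrt{u(r)}$, this is \emph{not} the sign of an E-polynomial. So the hammering procedure of Remark~\ref{rem:e-poly} cannot be applied to $G'$.

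Exponentiating gives $\sign G'(r)=-\sign Q(r)$, with $Q$ as in \eqref{eq:Q(r)}. But $Q$ still contains $\exp\big(g_{\tb(r)}(r)\big)$ with a non-polynomial exponent. One must therefore:
\begin{itemize}
\item pass to $Q'$ and isolate $\sqrt{u}$ as in \eqref{eq:DQ};
\item square to obtain the E-polynomial $V$ of \eqref{eq:V:=};
\item certify the sign patterns of $c_0$, $c_1$, $V$;
\item combine these with $Q(0\pm)=0$ and $Q(-\infty+)=2/e-1<0$.
\end{itemize}
This is exactly the core of the paper's proof. The paper reaches the same $Q$ by eliminating $e^{g}$ between the two critical-point equations of $\RAT$, after ruling out the boundary regions. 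Your profile formulation, once made rigorous, collapses to that argument; as written, the decisive step is missing.

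\textbf{The fallback does not close the gap.} A certified grid search with a Lipschitz bound can bracket $C_{\sup}$ to a fixed tolerance. It does not show the maximizer is unique, which is what makes $b_{\sup}$ well defined and computable to arbitrary precision. For that you would also need, for example, certified strict concavity near the grid maximizer plus a uniform gap elsewhere.

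\textbf{Smaller points.}
\begin{itemize}
\item In any version you need an explicit point beating the $r=-\infty$ value. That value is exactly $e/4=0.6795\ldots$, only about $0.0015$ below $C_{\sup}$; the paper uses $\RAT(0.34,-5.6)=0.681\ldots$. Saying it is ``roughly $0.68$'' does not separate the two.
\item Finiteness is immediate from \eqref{eq:exp-decay}, which gives $C_{\sup}\le e$. Note that \eqref{eq:S(b) sim} is an asymptotic in $b$, not in $r$.
\item Continuity of $G$ at $r=+\infty$ needs care, because $b^*(r)\to\infty$ there.
\item You must check $b^*(r)>0$ for all real $r$. For $r<0$ this follows from $b^*(r)=\B(r)+\ln(1-\be)/\be$ with $\B(r)>1>-\ln(1-\be)/\be$. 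For $r>0$ it needs an argument; otherwise the regime $G(r)=\RAT(0,r)$ must be treated separately.
\end{itemize}
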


One may note here that, in view of \eqref{eq:S(b) sim}, the coefficient $-1$ of $b$ in the exponent $-b$ in \eqref{eq:upper bound} cannot be replaced by a better (that is, smaller) one; that is, $-1$ cannot be replaced by $-(1+\ep)$, for any real $\ep>0$.  

%!!!! compare with \eqref{eq:exp-decay} 

Theorem~\ref{th:upper bound} will be proved in Section~\ref{main thms proofs}. 

%???? graphs of $S(b),I(b),Ce^{-b}$

For an illustration of Theorems~\ref{th:df bounds} and \ref{th:upper bound}, Figure~\ref{fig:pic} shows the graphs $\{(b,I(b))\colon -5<b<5\}$ (red), $\{(b,S(b))\colon -5<b<5\}$ (blue), and $\{(b,C_{\sup}e^{-b})\colon 0<b<5\}$ (gray); recall here the definitions of $S(b)$ and $I(b)$ in \eqref{eq:sups,infs} and the definition of $C_{\sup}$ in \eqref{eq:C_*:=}.  % \\checking\\plot.nb
\begin{figure}[htbp]
	\centering
		\includegraphics[width=.7\textwidth]{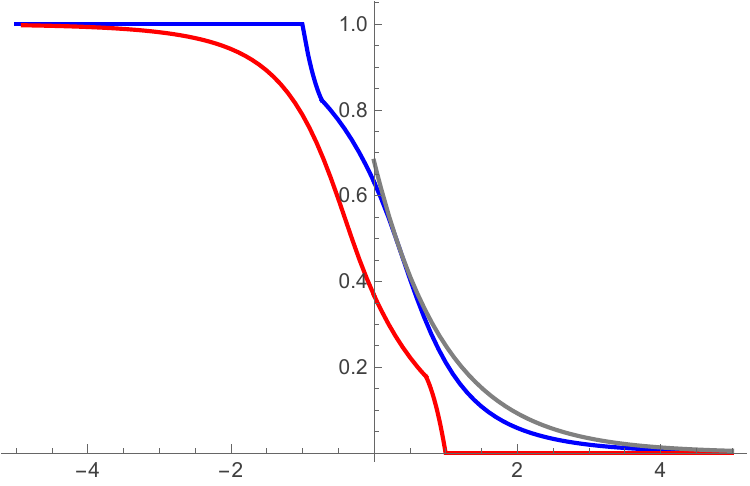}
	\caption{Graphs of the exact pointwise lower and upper bounds in Theorem~\ref{th:df bounds} and of the best exponential upper bound in Theorem~\ref{th:upper bound}: $I(b)$ (red), $S(b)$ (blue), $C_{\sup}e^{-b}$ (gray).} 
%	
%	
%	on the tail probability for isotropic log-concave distributions, and of the exact exponential upper bound on this probability.
%}
	\label{fig:pic}
\end{figure}
 
The two kinks of the red, lower-bound graph occur at $b=b_{**}$ (defined in Lemma~\ref{lem:b_{**}}) and at $b=1$, and the two kinks of the blue, upper-bound graph occur at $b=-1$ and $b=-b_{**}$. 

It appears that the maximum over all real $b\ge0$ of the difference $C_{\sup}e^{-b}-S(b)$ between the upper bound $C_{\sup}e^{-b}$ (on $S(b)$) and $S(b)$ occurs at $b=0$ and is equal to $C_{\sup}-(1-1/e)=0.0489\ldots$; however, this will not be proved or further considered in this paper. 

%!!!! how the blue graph was obtained 

\FloatBarrier

\section{Effective calculation of \texorpdfstring{$S(b)$}{S(b)}}\label{calculations}

By \eqref{eq:S(b)=}, if $b\le0$, then $S(b)$ has explicit expressions in terms of elementary functions. 

In the rest of this section, it is assumed by default that $b>0$. Then, again by \eqref{eq:S(b)=}, 
\begin{equation}\label{eq:=max=max}
	S(b)=\max_{r\ge r_b}p_1(r)=\max_{r>r_b}p_1(r), 
\end{equation}
with $p_1(r)=p_{1,b}(r)$ as defined in \eqref{eq:p_{1,b}} and expressed in \eqref{eq:p_{1,b}=} and \eqref{eq:p_{1,b} special}.  

We will need the following proposition. 

\begin{proposition}\label{prop:p'(r)}
Let $p'_1(r)=p'_{1,b}(r)$ denote both the right and left derivatives of the function $p_1=p_{1,b}$ at a point $r\in\R$, even if these derivatives at $r$ are not equal to each other. 
For all real $r$ we have 
\begin{equation}\label{eq:p'(r)}
	p'_1(r)=p'_{1,b}(r)>-\tfrac12\,e^{-b}; 
\end{equation}
this inequality holds for both the right and left derivatives. 
%where $p'_1(r)=p'_{1,b}(r)$ denotes the right or left derivative of the function $p_1=p_{1,b}$ at the point $r$. 
\end{proposition}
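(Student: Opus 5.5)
The plan is to work directly with the closed form \eqref{eq:p_{1,b}=} and reduce \eqref{eq:p'(r)} to an explicit two-variable inequality in $(r,b)$.

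\emph{Trivial region.} If $b>1$ and $r<r_b$, then $s(r)-b\sqrt{u(r)}<0$. Indeed, $s(r)-b\sqrt{u(r)}=\sqrt{u(r)}\,(\B(r)-b)$ and $\B$ is increasing by Lemma~\ref{lem:B}. Hence $p_1\equiv0$ near $r$, so $p_1'(r)=0>-\tfrac12e^{-b}$.

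\emph{Kink at $r_b$.} At $r=r_b$ (when $b>1$) the left derivative is $0$. The right derivative equals the derivative of the smooth branch discussed next, so it is covered there.

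\emph{Smooth region.} It remains to treat $r$ with $\B(r)>b$, where $p_1$ is smooth. This includes $r=0$ when $b<\sqrt3$, handled by continuity using $s\sim r^2/2$ and $u\sim r^4/12$. There I will write
\[
p_1(r)=\int_b^{\B(r)}\c(r)e^{\h(r)x}\,dx
\]
and differentiate under the integral:
\[
p_1'(r)=\c(r)e^{\h(r)\B(r)}\B'(r)+\int_b^{\B(r)}\big(\c'(r)+\c(r)\h'(r)x\big)e^{\h(r)x}\,dx .
\]
By Lemma~\ref{lem:B}, the first term is $\ge0$. The integral is explicit, namely a combination of $e^{\h b}$ and $e^{\h\B}$ with rational coefficients in $\h$. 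Equivalently, with $w:=(s-b\sqrt u)/(e^r-1)$, one has
\[
p_1'=\frac{e^w w'(e^r-1)-(e^w-1)e^r}{(e^r-1)^2}.
\]
Thus \eqref{eq:p'(r)} becomes the statement that
\[
G(r,b):=e^{b}p_1'(r)+\tfrac12>0
\quad\text{on}\quad D:=\{(r,b)\colon r\in\R,\ 0<b<\B(r)\}.
\]

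\emph{Reducing to one variable.} For fixed $r$, the map $b\mapsto G(r,b)$ has the form $e^b\big(\alpha+\beta e^{\h b}+\gamma b e^{\h b}\big)$ plus the constant $\tfrac12$, where $\alpha,\beta,\gamma$ are explicit in $r$. Its $b$-derivative therefore has a controlled number of sign changes. I will use this to locate the minimum over $b\in(0,\B(r))$: it occurs either at an endpoint $b\downarrow0$ or $b\uparrow\B(r)$, or at an interior critical point that solves an explicitly computable equation. At $b=\B(r)$ we have $p_1'=\c e^{\h\B}\B'>0$, which is trivially fine. At $b=0$ the inequality becomes a one-variable inequality in $r$ (after substituting $t=e^r$). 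For the interior critical point, I will eliminate $b$ using the critical-point equation, which again leaves a one-variable inequality in $r$.

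\emph{Verifying the one-variable inequalities.} These reduced inequalities involve exponential polynomials in $r$. I will verify them by combining:
\begin{itemize}
\item asymptotics as $r\to\pm\infty$, using the limits \eqref{r=0 or pm infty}: as $r\to\infty$, $\h\to-1$ and $p_1\to e^{-1-b}$, and the derivative is expected to decay; as $r\to-\infty$, $\h\to1$;
\item a Taylor expansion near $r=0$;
\item on compact $r$-intervals, interval arithmetic or the sign-pattern / derivative-counting technique used elsewhere in the paper, with details relegated to a Mathematica notebook.
\end{itemize}

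\emph{Main obstacle.} I expect the hardest step to be uniformity in $b$ combined with the factor $e^{b}$. The bound $-\tfrac12e^{-b}$ decays, so for large $b$ (and correspondingly large $r$, since $\B(r)>b$) I need to show that the negative part of the integral term is at most $\tfrac12e^{-b}$. My first attempt will be the following. Write $\c'+\c\h'x=\c\,(\c'/\c+\h'x)$. Show that $\h'<0$ and that $\c'/\c+\h'x$ is bounded below by $-K(1+x)$ on the relevant range. Then bound the negative part by
\[
K\int_b^\infty (1+x)\,\c(r)e^{\h(r)x}\,dx .
\]
For large $r$ this integral is of order $e^{-b}$ times a factor that tends to $0$; here I would use $\h(r)\to-1$ exponentially fast, so that $\h'(r)$ is exponentially small. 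Only a bounded range of $r$ is left over, which the numerical/sign-pattern verification handles.
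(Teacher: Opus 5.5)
Your handling of $r<r_b$ and of the left derivative at $r_b$ is correct and agrees with the paper, and so is your differentiated formula for $p_1'$ on the branch $\B(r)\ge b$. The gap is the step you yourself call the main obstacle, which the proposal never resolves. Your reduction to one variable does not work as described. For fixed $r$ you have $\partial_bG(r,b)=e^b\big[\alpha+(\beta_1+\gamma_1b)e^{\h(r)b}\big]$ with $\beta_1=(1+\h)\beta+\gamma$ and $\gamma_1=(1+\h)\gamma$. The critical-point equation is therefore transcendental in $b$. Substituting it back into $G$ still leaves $b$ and $e^b$, and solving for $b$ needs a Lambert-$W$ expression in $r$, with branch and range ($0<b<\B(r)$) issues. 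Even the endpoint $b\downarrow0$ brings in the factor $\exp\big(s(r)/(e^r-1)\big)$. None of these are E-polynomials, so neither \texttt{Reduce} nor repeated hammering applies to them.

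Your fallback, tail asymptotics plus ``numerical verification on a bounded $r$-range'', leaves a genuinely two-dimensional region in $(r,b)$, with $b$ up to $\B(r)$. There you would need rigorous interval enclosures with a certified positive margin, and none is supplied. Also, the majorant $\int_b^\infty(1+x)\c e^{\h x}\,dx$ cannot be used for $r\le0$, since $\h\ge0$ there and it diverges.

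The missing idea is to bound the negative terms by a multiple of $p_1(r)$ itself, rather than by $e^{-b}$ directly. Integrate only over $[b,\B(r)]$. There $\c\int_b^{\B}e^{\h x}\,dx=p_1(r)$ and $0\le\c\int_b^{\B}xe^{\h x}\,dx\le\B(r)\,p_1(r)$. Since $\B'>0$ and $\h'\le0$, this gives $p_1'(r)>-\big(|\c'/\c|+|\h'|\B\big)(r)\,p_1(r)$.

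The coefficient is bounded uniformly in $r$ by one-variable E-polynomial inequalities. The paper proves $|\c'/\c|\le 8/100$ and $-\h'\B\le 3/5$ for $r>0$ by hammering. It extends these to $r<0$ because $\c$ is even, $\h$ is odd, and $\B$ is increasing.

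Finally, $p_1(r)\le S(b)\le C_{\sup}e^{-b}<0.69\,e^{-b}$ by Theorem~\ref{th:upper bound}, whose proof does not use the present proposition. Hence $p_1'(r)>-0.68\cdot0.69\,e^{-b}>-\tfrac12e^{-b}$. The factor $e^{-b}$ and the uniformity in $b$ come for free from the already proved tail bound, and no two-variable verification is needed.
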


This proposition will be proved in Section~\ref{proof:p'(r)}.

\medskip
\hrule
\medskip

Let us now describe an algorithm to find the maxima in \eqref{eq:=max=max} with any prescribed accuracy. 

Take any real $b>0$. Pick any ``initial'' real $r_\ini>r_b$ (so that $p_{1,b}(r_\ini)>0$) and any real ``step'' $\s>0$. For all $j\in\Z$, let 
\begin{equation*}
	y_j:=p_1(r_\ini+j\s). 
\end{equation*}
It is possible that $y_j=y_{j+1}>0$ for some $j\in\Z$, even though, in view of Proposition~\ref{prop:F1}, this exception may only happen with probability $0$ assuming (say) that the pair $(r_\ini,\s)$ is drawn from an absolutely continuous probability distribution over $\R^2$. Nevertheless, in general we must deal with this unlikely, exceptional possibility. 

What is precluded by Proposition~\ref{prop:F1} is the situation when $y_j=y_{j+1}=y_{j+2}>0$ for some $j\in\Z$, and this observation is the key point in dealing with the mentioned exceptional case. 

Using (say) Mathematica, we can compute the values $y_j$ of the function $p_1$ with any degree of accuracy. So, for each $j\in\Z$ we can get a however small interval $Y_j$ containing the value $y_j$. If $y_j\ne y_{j+1}$, then, sufficiently increasing the accuracy of our approximation of these two values, we can make the corresponding intervals $Y_j$ and $Y_{j+1}$ disjoint. More specifically, if $y_j<y_{j+1}$ for some $j\in\Z$, then we can ``resolve'' this inequality by getting corresponding containing intervals $Y_j$ and $Y_{j+1}$ such that $Y_j<Y_{j+1}$ (meaning that $y<z$ for all $y\in Y_j$ and $z\in Y_{j+1}$);
%the interval $Y_j$ is strictly to the left of the interval $Y__{j+1}$; 
and if $y_j>y_{j+1}$, then we can ``resolve'' this inequality by getting corresponding containing intervals $Y_j$ and $Y_{j+1}$ such that $Y_j>Y_{j+1}$ (meaning that $y>z$ for all $y\in Y_j$ and $z\in Y_{j+1}$). 

%the interval $Y_j$ is strictly to the right of the interval $Y__{j+1}$. 

If now for some $j\in\Z$ we have $0<y_j<y_{j+1}$ -- and, unbeknownst to us, $y_{j+2}$ is either very close or equal to $y_{j+1}$, 
then we can partially ``resolve'' this inequality and the possibly approximate equality $y_{j+1}\ap y_{j+2}$ so that $Y_j<Y_{j+1}$ but with the intersection $Y_{j+1}\cap Y_{j+2}$ remaining nonempty. In such a case, let us code this situation ($y_j<y_{j+1}\ap y_{j+2}$) as the ``pattern'' string $<\ap$. So, for any $j\in\Z$, we could in general have $3\times3=9$ such strings $\r_1\r_2$, with $(\r_1,\r_2)\in\{<,>,\ap\}^2$. 

If we get the string $<>$ for some $j\in\Z$, then we will know that the unique maximizer, say $r_{\max}=r_{\max}(b)$, of $p_1=p_{1,b}$ is in the interval $[r_\ini+j\s,r_\ini+(j+2)\s]$.

On the other hand, in view of Proposition~\ref{prop:F1}, in our situation the string $><$ will be impossible. 

Moreover, crucially, because $y_j=y_{j+1}=y_{j+2}>0$ is impossible for any $j\in\Z$, we will never end up with the string $\ap\ap$, provided that we can increase the accuracy of our approximation sufficiently. 

So, we have to consider the remaining $6$ strings, each of length $2$: 
\begin{equation*}
	<<\qquad <\ap\qquad >>\qquad >\ap\qquad \ap<\qquad \ap>  
\end{equation*}
Note that, in accordance with the preceding discussion, none of these $6$ strings contains both characters $<$ and $>$. 

If the resulting string at some $j\in\Z$ contains the character $<$, we move by one step $\s$ to the right, looking now at $y_{j+1},y_{j+2},y_{j+3}$ ``at $j+1$'' instead of $y_j,y_{j+1},y_{j+2}$ ``at $j$''. If, ``at $j+1$'', the possible relations $<,>,\ap$ between $y_{j+1},y_{j+2},y_{j+3}$ resolve to a string containing $>$, then we will know that the unique maximizer $r_{\max}$ is in the interval $[r_\ini+j\s,r_\ini+(j+3)\s]$. If the resulting string ``at $j+1$'' does not contain $>$, then we move further by one step to the right. Etc. Again in view of Proposition~\ref{prop:F1} \emph{and} because we will never encounter the string $\ap\ap$, we will eventually, say at some $j_b\in\Z$, for the first time get a string containing $>$, and then we will know that the unique maximizer $r_{\max}$ is in the interval $[r_\ini+(j_b-1)\s,r_\ini+(j_b+2)\s]$. 

If the resulting string at some $j\in\Z$ contains the character $>$, we similarly move to the left, by one step or more.  
Here, too, we will never encounter the string $\ap\ap$. Indeed,  
we start fully and strictly to the right of $r_b$, at the point $r_\ini>r_b$, with $y_0=p_1(r_\ini),y_1=p_1(r_\ini+\s),y_2=p_1(r_\ini+2\s)$ all $>0$. If/when during the ``moving'' process $r_\ini+j\s$ first becomes $\le r_b$ and thus $y_j=p_1(r_\ini+j\s)$ becomes $0$ for some $j\in\Z$, then $y_j=0<y_{j+1}$ and hence 
for $y_j,y_{j+1},y_{j+2}$ we get a pattern string containing $<$, so that we then immediately stop, having never encountered $\ap\ap$. 

Thus, in all cases we will know that the unique maximizer $r_{\max}$ is in the interval $[r_\ini+i\s,r_\ini+(i+2)\s]$ or $[r_\ini+i\s,r_\ini+(i+3)\s]$ for some $i\in\Z$, and this interval is of length $2\s$ or $3\s$.  

Subdividing this interval into intervals of length $\s/2$, we will similarly (and possibly much faster) confine the maximizer $r_{\max}$ to an interval of length $\le3\s/2$. 

Continuing so, for any given natural $n$, we will confine $r_{\max}$ to a known interval $[\al,\be]$ of length $\le3\s/2^n$. Then, in view of Proposition~\ref{prop:p'(r)}, $S(b)$ will be in the  interval \break   
$[p_1(\be),p_1(\be)+3e^{-b}\,\s/2^{n+1})$, of length $3e^{-b}\,\s/2^{n+1}$ (which gets smaller as $b$ gets larger). 
Choosing a smaller initial step $\s$ and using the same $n$, one of course will in the end get a narrower interval for $S(b)$ -- albeit for the price of a possibly larger number of moves right or left before confining $r_{\max}$ to a finite interval.  

The described algorithm is effective, in the sense that for any real $b>0$ it allows one to compute $S(b)$ with any prescribed accuracy in a finite time, provided enough computing resources. However, almost certainly there are more efficient (even if more complicated and possibly requiring better \emph{a priori} estimates than \eqref{eq:p'(r)}) algorithms toward the same end. 

%%???? graphs of $S(b),I(b),Ce^{-b}$
%
%For an illustration of Theorems~\ref{th:df bounds} and \ref{th:upper bound}, here are the graphs $\{(b,I(b))\colon 0<b<3.5\}$ (red), $\{(b,S(b))\colon 0<b<3.5\}$ (red), and $\{(b,C_{\sup}e^{-b})\colon 0<b<3.5\}$ (red); recall here the definitions \eqref{eq:sups,infs} of $S(b)$ and $I(b)$ and \eqref{eq:C_*:=}.  % \\checking\\plot.nb
%\begin{figure}[htbp]
%	\centering
%		\includegraphics[width=.6\textwidth]{graphs.pdf}
%	\caption{Graphs of the exact pointwise lower and upper bounds on the tail probability for isotropic log-concave distribution, and of the exact exponential upper bound on this probability.}
%	\label{fig:pic}
%\end{figure}
% 
%The two kinks of the red, lower-bound graph occur at $b=b_{**}$ (defined in Lemma~\ref{lem:b_{**}}) and at $b=1$. 
%
%!!!! how the blue graph was obtained 

%\newpage
%
%\medskip
%\hrule
%\medskip

\section{Proof of Theorem~\ref{rem:H}}\label{2-piece proof}

\subsection{``Compact'' version of Theorem~\ref{rem:H}}\label{compact}

%\begin{proposition}\label{prop:exp decay}
%For any $\mu\in \L$ and any $s\in(0,1)$, 
%\begin{equation}
%	\int_\R e^{s|x|}\mu(dx)\le\frac{2e}{1-s}. 
%\end{equation}
%\end{proposition}

For any real $b\ge\sqrt3$, let $K^b:=[-b,b]$, 
\begin{equation*}
\PP^b:=\{\mu\in\PP\colon S_\mu\subseteq K^b\}, \quad 
	\L^b:=\L\cap\PP^b, \quad 
	\L_2^b:=\L_2\cap\PP^b, 
\end{equation*}
where $S_\mu$ denotes the support set of $\mu$. 

%$\L^b$ and $\L_2^b$ denote the sets of all measures in $\L$ and $\L_2$, respectively, supported on the interval $K^b:=[-b,b]$. Let $\L^b_\le$ denote the set of all measures in $\PP$ supported on $K^b$ and such that $\int_\R x\,\mu(dx)\le0$ and $\int_\R x^2\,\mu(dx)\le1$. Let also $\PP^b$ denote the set of all measures in $\PP$ supported on $

The l.s.c.\ function $H$ is bounded from below on the compact interval $K^b$. So, in view of \cite[Theorem~1.1]{feinberg_etal} (applied to the function $H-\inf_{K^b}H$, which is nonnegative on $K^b$) the function 
\begin{equation}\label{eq:Phi^b}
	\PP^b\ni\mu\mapsto\Phi^b(\mu):=\int_{K^b} H\,d\mu 
\end{equation}
is weakly l.s.c. 

By Borell's Theorem~2.2 \cite{borell74} and the weak compactness of $\PP^b$, the set $\L^b$ is weakly compact. Also, $\L^b$ is nonempty, because $b\ge\sqrt3$ and hence the uniform distribution over the interval $[-\sqrt3,\sqrt3]$ belongs to $\L^b$. 

The set $C^b:=\overline{\conv \L^b}$ is weakly compact, where $\conv M$ and $\overline M$ denote, respectively, the convex hull and  the closure of a subset $M$ of $\PP$ with respect to the weak topology over $\PP$. So, the l.s.c. function $\Phi^b$ attains its minimum value, say $m$, on $C^b$. Since $C^b$ is convex and weakly compact and $\Phi^b$ is affine and weakly l.s.c., the set $C^{b,\Phi}:=C^b\cap(\Phi^b)^{-1}(\{m\})$ is nonempty, convex, and weakly compact. So, by the Krein--Milman theorem (see e.g.\ \cite{phelps}), the set $\ext C^{b,\Phi}$ is nonempty, where $\ext M$ stands for the set of all extreme points of a set $M$. Hence,   
$\Phi^b$ attains its minimum value on $C^b$ at some $\mu^b\in\ext C^{b,\Phi}$. It is easy to see that $\ext C^{b,\Phi}\subseteq \ext C^b$, so that $\mu^b\in\ext C^b$.

Next, by Milman's theorem (see e.g.\ \cite[Theorem~3.25]{rudin}) and the mentioned weak compactness of $\L^b$, we have $\ext C^b=\ext\overline{\conv \L^b}\subseteq \L^b$. So, $\mu^b\in \L^b\subseteq\conv \L^b$. Also, $\mu^b\in\ext C^b=\ext\overline{\conv \L^b}$. So, $\mu^b\in\ext\conv \L^b$. 

Writing 
\begin{equation*}
	\L^b=
	\Big\{\mu\in\PP^b\colon\mu\text{ is log concave}, \int_\R x\,\mu(dx)=0,\ \int_\R (x^2-1)\,\mu(dx)=0\Big\},
\end{equation*}
it is easy to see that $\ext\conv \L^b\subseteq\ext\conv \L^b_\le$, where 
\begin{equation*}
	\L^b_\le:=
	\Big\{\mu\in\PP^b\colon\mu\text{ is log concave}, \int_\R x\,\mu(dx)\le0,\ \int_\R (x^2-1)\,\mu(dx)\le0\Big\}.
\end{equation*}

So, $\mu^b\in\ext\conv \L^b_\le$. Also, the measure $\mu^b$ is in $\I$ and hence non-degenerate. It now follows by Theorem~1 of Fradelizi and Gu\'edon \cite{frad-guel} (used with $p=2$, $d=1$, and $g_1(x)=-x$ and $g_2(x)=1-x^2$ for all real $x$) that $\mu^b$ has a two-piece exponential density; here we also use the fact that no measure in $\L$ can be degenerate. Recalling now that $\mu^b\in \L^b$, we see that the function $\Phi^b$ attains its minimum at 
\begin{equation*}
	\mu^b\in \L_2\cap\PP^b. 
\end{equation*}

\subsection{General case: approximation by the ``compact'' version}\label{compactification}

Here we will complete the proof of Theorem~\ref{rem:H} by reducing the consideration of the measures in $\L$ to that of measures in $\L^b$ and then use the result of Subsection~\ref{compact}. 

Toward this end, we will need the following proposition. 
%, possibly of independent interest. 

\begin{proposition}\label{prop:exp decay}
For any $v\in V$, there exists some real $y=y_v\ge0$ such that
for all log-concave measures $\mu\in\PP$ with $\int_\R x^2\mu(dx)\le1$ we have 
\begin{equation}\label{eq:exp decay}
	\int_\R e^{|x|}v(|x|)1(|x|\ge y_v)\mu(dx)\le M_v:=
	e^y v(y)+e\int_y^\infty v(x)\,dx<\infty. 
\end{equation}
\end{proposition}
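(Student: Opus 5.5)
The plan is to reduce \eqref{eq:exp decay} to a uniform exponential tail bound for $|X|$ and then integrate by parts (Stieltjes form). Take $y=y_v$ to be the point $y>0$ from Definition~\ref{def:V}, so that on $[y,\infty)$ the function $v$ is positive, nonincreasing, integrable and differentiable with $v'\ge-v$. Let $\mu$ be any log-concave measure in $\PP$ with $\int x^2\,\mu(dx)\le1$, and put
\[
G(t):=\mu(\{x\colon|x|\ge t\}),\qquad \varphi(t):=e^tv(t)\quad (t\ge y).
\]
The key input is the tail bound
\begin{equation*}
G(t)\le e^{1-t}\quad\text{for all real }t\ge0,
\end{equation*}
which is trivial for $t\le1$ since $G\le1$. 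For $t>1$ I would invoke Lemma~5.7 of Lovász and Vempala \cite{lovasz-vempala} with dimension $n=1$: for a log-concave random vector $X$ with $\E|X|^2\le n$ it gives $\P(|X|>t\sqrt n)\le e^{1-t}$. One must check that their argument uses only log-concavity and the second-moment bound, not centering. If it needs centering, I would instead prove the one-dimensional bound directly. The route would be to compare $\mu$ with a one-sided exponential (truncated) law via the standard crossing argument for log-concave densities, applied separately to $[t,\infty)$ and $(-\infty,-t]$ and then combined. I expect this tail bound, uniform over non-centered measures and with the exact constant $e^{1-t}$, to be the main obstacle.

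Given the tail bound, the computation is routine. Since $\varphi'=e^t(v+v')\ge0$ on $[y,\infty)$, integration by parts gives
\[
\int_\R \varphi(|x|)\,1(|x|\ge y)\,\mu(dx)=\varphi(y)G(y)+\int_y^\infty\varphi'(t)G(t)\,dt-\lim_{T\to\infty}\varphi(T)G(T).
\]
The last limit is $0$, because $\varphi(T)G(T)\le e\,v(T)$ and $v(T)\to0$: indeed $v$ is positive, nonincreasing and integrable on $[y,\infty)$. To justify the limit rigorously I would first work on $[y,T]$ and then let $T\to\infty$ by monotone convergence.

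Using $G(y)\le1$, $\varphi'\ge0$ and $G(t)\le e^{1-t}$, the right-hand side is at most
\[
e^yv(y)+e\int_y^\infty(v+v')\,dt=e^yv(y)+e\int_y^\infty v-e\,v(y),
\]
again because $v(\infty)=0$. This is at most $M_v$, and $M_v<\infty$ by the integrability of $v$. Note that the choice of $y$ depends only on $v$, not on $\mu$, as the statement requires.
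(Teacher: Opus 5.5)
Your proposal is correct and follows essentially the same route as the paper: the Lov\'asz--Vempala tail bound $\P(|X|>t)\le e^{1-t}$, which needs no centering (the paper itself notes this in its discussion section), combined with the layer-cake/Fubini identity for $e^{t}v(t)$, whose derivative $e^{t}(v+v')$ is nonnegative on $[y,\infty)$. The only cosmetic difference is at the last step. You compute $\int_y^\infty(v+v')=\int_y^\infty v-v(y)$, which gives a slightly smaller bound than $M_v$, whereas the paper simply uses $0\le v+v'\le v$ to get $M_v$ directly.
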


\begin{proof}[Proof of Proposition~\ref{prop:exp decay}]
Take any log-concave measure $\mu\in\PP$ with $\int_\R x^2\mu(dx)\le1$. 
Let $X$ be a r.v.\ with the distribution $\mu$, so that $\E X^2\le1$. Then, by Lemma~5.7 of Lovasz and Vempala \cite{lovasz-vempala}, 
\begin{equation}\label{eq:exp-decay}
	\P(|X|>t)\le e^{1-t}
\end{equation}
for all real $t$. 
Take any $v\in V$. Let then $y$ be as in Definition~\ref{def:V}. 

For $x\in(y,\infty)$, let now $w(x):=v(x)+v'(x)$, so that $0\le w(x)\le v(x)$ and $(e^x v(x))'=e^x w(x)$. It follows that 
\begin{multline*}
	\infty>I:=e\int_y^\infty v(x)\,dx \ge e\int_y^\infty w(x)\,dx  
	=\int_y^\infty e^x e^{1-x}w(x)\,dx \\ 
	\ge\int_y^\infty e^x \P(|X|>x)w(x)\,dx  
	\ge\E 1(|X|\ge y)\int_y^\infty e^x 1(|X|>x)w(x)\,dx \\ 
	=\E 1(|X|\ge y)\int_y^{|X|} e^x w(x)\,dx  
	=\E 1(|X|\ge y)\big(e^{|X|}v(|X|)-e^y v(y)\big) \\ 
		=\int_\R e^{|x|}v(|x|)1(|x|\ge y)\mu(dx)-e^y v(y)\P(|X|\ge y). 
\end{multline*}
So, 
\begin{equation*}
	\int_\R e^{|x|}v(|x|)1(|x|\ge y)\mu(dx)\le e^y v(y)\P(|X|\ge y)+I\le M_v. 
\end{equation*}
\end{proof} 

Take now any function $H\colon\R\to\R$ as in Theorem~\ref{rem:H}, so that $H$ is a  
l.s.c.\ locally integrable function such that \eqref{eq:H le,v} holds for some $v\in V$. Then, by Proposition~\ref{prop:exp decay}, $H$ is $\mu$-integrable for all $\mu\in \L$. So, we have the function 
\begin{equation}\label{eq:Phi}
	\L\ni\mu\to\Phi(\mu):=\int_\R H\,d\mu, 
\end{equation}
and, for any real $b>0$,  
\begin{equation*}
	\Phi(\mu)=\Phi^b(\mu)\text{ for }\mu\in \L^b. 
\end{equation*}

Now take any 
\begin{equation}\label{eq: mu in L}
\mu\in \L. 
\end{equation}
Let $X$ be a r.v.\ with the distribution $\mu$. 
For all large enough natural $n$ we have $p_n:=\P(|X|\le n)>0$. For such $n$, let $X_n$ be a r.v.\ whose distribution is the conditional distribution of $X$ given $|X|\le n$. Then 
\begin{equation*}
	p_n|\E X_n|=|\E X1(|X|\le n)|=|\E X1(|X|>n)|\le\E X^2/n=1/n.
\end{equation*}
Therefore and because $p_n\to1$ (as $n\to\infty$), we have $\E X_n\to0$. 
Also, by Proposition~\ref{prop:exp decay} (with any $v\in V$ such that $v(x)=e^{-x}x^3$ for real $x\ge y:=3$), $\E|X|^3<\infty$; the latter inequality may also be deduced from \eqref{eq:exp-decay} or from \cite[formula~(16)]{karlin_etal61} or from \cite[formula~(0.4)]{keilson72}. So, 
\begin{equation*}
1-p_n\E X_n^2=1-\E X^2 1(|X|\le n)=\E X^2 1(|X|>n)\le\E|X|^3/n\to0, 
\end{equation*}
whence $\E X_n^2\to1$ and $\Var X_n\to1$. Also, of course, $X_n$ converges in distribution to $X$. 
So, letting 
\begin{equation*}
	Z_n:=\frac{X_n-\E X_n}{\sqrt{\Var X_n}}
\end{equation*}
for large enough natural $n$, we see that the distribution (say $\mu_n)$ of $Z_n$ is in $\L^{2n}$ and converges to $\mu$. For the p.d.f., say $g_n$, of $Z_n$ and all real $z$ we have  
\begin{equation*}
	g_n(z)=s_n f_n(s_n z+m_n)=\frac{s_n}{p_n} f(s_n z+m_n)1(|s_n z+m_n|\le n),
\end{equation*}
where $f$ is the p.d.f.\ of $X$, $f_n$ is the p.d.f.\ of $X_n$, 
$s_n:=\sqrt{\Var X_n}\to1$, and $m_n:=\E X_n\to0$. 

The function $f$ is log concave and hence has at most two points of discontinuity. So, $g_n\to f$ almost everywhere (a.e.) on $\R$. 
%The log-concavity of $f$ also implies that $f$ is bounded: otherwise, we would have $f(x)\to\infty$ if $x\to\infty$ or if $x\to-\infty$, which would contradict the condition $\int_\R f=1$. So, 

By %\cite[Lemma~5.5(a)]{lovasz-vempala}, 
\eqref{eq:f le1}, $g_n\le1$ for all $n$. 
%the p.d.f's $g_n$ are bounded by the same real constant for all large enough $n$. 
Therefore, for each real $z>0$, by the local integrability of $H$ and dominated convergence, 
\begin{equation}\label{eq:le z}
	\E H(Z_n)1(|Z_n|\le z)=\int_{-z}^z H(t)g_n(t)\,dt
	\to\int_{-z}^z H(t)f(t)\,dt=\E H(X)1(|X|\le z). 
\end{equation}

Take now any real $\ep>0$. By condition \eqref{eq:H le,v}, for some real $x_\ep>0$ and all real $x$ with $|x|\ge x_\ep$ we have 
\begin{equation*}%\label{eq:H le,v}
	|H(x)|\le\ep e^{|x|}v(|x|).  
\end{equation*}
Therefore and in view of Proposition~\ref{prop:exp decay}, for all real $z\ge z_\ep :=\max(x_\ep,y_v)$ and all $n$, 
\begin{equation}\label{eq:le ep M}
	|\E H(Z_n)1(|Z_n|>z)|\le\ep\E e^{|Z_n|}v(|Z_n|)1(|Z_n|>y_v)\le\ep M_v  
\end{equation}
and, similarly, $|\E H(X)1(|X|>z)|\le\ep M_v$. Hence, looking back at \eqref{eq:le z}, we see that $\E H(X)\ge\limsup_n \E H(Z_n)-2\ep M_v$. Letting now $\ep\downarrow0$, we get 
\begin{equation}\label{eq:ge limsup}
	\E H(X)\ge\limsup_n \E H(Z_n). 
\end{equation}
%
%
%The support of the distribution of $X_n$ is contained in the interval $[-n,n]$. Therefore and because $\E X_n\to0$ and $\Var X_n\to1$, for all large enough $n$ the support of the distribution $\mu_n$ of $Z_n$ is contained in the interval $[-2n,2n]$.
By the conclusion of Subsection~\ref{compact} and because, as was noted, $\mu_n\in\L^{2n}$ for all large enough $n$, 
\begin{equation*}
\E H(Z_n)=\int_\R H\,d\mu_n\ge \int_\R H\,d\mu^{2n}.  
\end{equation*}
Therefore and in view of \eqref{eq:ge limsup}, for any $\mu\in \L$ there is a sequence $(\nu_n)$ of measures in $\L_2$ such that $\lim_n \int_\R H\,d\nu_n$ exists and
\begin{equation}\label{eq:int H dmu ge}
	\int_\R H\,d\mu=\E H(X)\ge\lim_n \int_\R H\,d\nu_n.  
\end{equation}
%and 
%\begin{equation}
%	\mu^{2n}\in \L_2\cap\PP^{2n}.
%\end{equation}
Because of the condition $\int_\R x^2\,\mu(dx)=1$ in \eqref{eq:iso}, any sequence in $\I$ (and hence any sequence in $\L$ and hence any sequence in $\L_2$) is tight. So, by Prohorov's theorem (see e.g.\ \cite[Theorem~5.1]{billingsley99}), without loss of generality (wlog), the sequence $(\nu_n)$ in $\L_2$ weakly converges to a measure $\nu\in\PP$: 
\begin{equation}\label{eq: nu_n to nu}
	\nu_n\to\nu.
\end{equation}
Actually, by Borell's Theorem~2.2 \cite{borell74}, the measure $\nu$ is log concave. 
Moreover, by \eqref{eq:exp-decay}, for any r.v.\ $Y$ with the distribution in $\L$, 
\begin{equation*}
	\E|Y|^3=\int_0^\infty 3t^2 \P(|Y|>t)\,dt\le \int_0^\infty 3t^2 e^{1-t}\,dt=6e<\infty. 
\end{equation*}
So, by uniform integrability (cf.\ e.g.\ \cite[Theorem~3.5]{billingsley99}), 
\begin{equation}\label{eq:nu in L}
	\nu\in \L. 
\end{equation}

For real $z>0$ and all real $x$, let 
\begin{equation}\label{eq:H_z}
	H_z(x):=\begin{cases}
	H(x)&\text{ if }-z\le x\le z, \\ 
	H(z)&\text{ if }x\ge z, \\ 
		H(-z)&\text{ if }x\le -z. 
	\end{cases}
\end{equation}
Then the function $H_z$ is l.s.c.\ and bounded from below. So, by the Fatou lemma for weak convergence \cite[Theorem~1.1]{feinberg_etal}, applied to the nonnegative function $H_z-\inf H_z$, 
\begin{equation}\label{eq:H_z compar}
\liminf_n \int_\R H_z\,d\nu_n\ge \int_\R H_z\,d\nu
\end{equation}
for each real $z>0$. 
Next, for all real $z>0$ and all real $x$, 
\begin{multline*}
	|H(x)-H_z(x)|=|H(x)-H_z(z)|\,1(x>z)+|H(x)-H_z(-z)|\,1(x<-z) \\ 
	\le(|H(x)|+h_z)1(|x|>z),
\end{multline*}
where $h_z:=|H_z(z)|+|H_z(-z)|$. 
%Take now any real $\ep>0$. The for all $z\ge z_\ep$ and all $n$, 
Let now $z\to\infty$. Then (cf.\ \eqref{eq:le ep M}) 
\begin{equation*}%\label{eq:le ep M}
	\sup_n\int_\R|H(x)|1(|x|>z)\nu_n(dx)\to0  
\end{equation*}
and, by \eqref{eq:exp-decay} and \eqref{eq:H le,v}, 
\begin{equation*}
	\int_\R h_z1(|x|>z)\nu_n(dx)\le h_z e^{1-z}
	=o\big(e^z v(z)e^{1-z}\big)=o(1), 
\end{equation*}
since the function $v$ is nonnegative, nonincreasing, and 
integrable in a neighborhood of $\infty$. Hence, as $z\to\infty$, 
\begin{equation*}%\label{eq:le ep M}
	\sup_n\int_\R|H-H_z|d\nu_n\to0  
\end{equation*}
%%%%%Again by the Fatou lemma for the weak convergence, for each real $z>0$, 
%%%%%\begin{equation}%\label{eq:le ep M}
%%%%%	\int_\R|H-H_z|d\nu\le\sup_n\int_\R|H-H_z|d\nu_n\to0  
%%%%%\end{equation}
To obtain the latter conclusion, we only used the condition that $\nu_n\in \L$ for all $n$. So, in view of \eqref{eq:nu in L}, 
we similarly have 
\begin{equation*}%\label{eq:le ep M}
	\int_\R|H-H_z|d\nu\to0 
\end{equation*}
as $z\to\infty$. 
Combining the latter two displays with \eqref{eq:H_z compar}, we see that 
\begin{equation}\label{eq:liminf}
	\liminf_n \int_\R H\,d\nu_n\ge \int_\R H\,d\nu. 
\end{equation}
Using also \eqref{eq:int H dmu ge}, we get 
\begin{equation}\label{eq:mu ge nu}
	\int_\R H\,d\mu\ge\int_\R H\,d\nu;   
\end{equation}
recall that here $\mu$ is any measure in $\L$ (as was assumed in \eqref{eq: mu in L}) and, in view of \eqref{eq: nu_n to nu}, $\nu$ is the weak limit of measures in $\L_2$. One may note here that $\nu$ depends on the choice of $\mu$. 

To complete the proof of Theorem~\ref{rem:H}, we will use the following proposition.  

\begin{proposition}\label{prop:L_2}
The sets $\F_2,\F_1,\F_1\cup\F_{2,\infty}$ of functions are sequentially compact with respect to the almost everywhere (a.e.) convergence on $\R$.
The sets $\L_2,\L_1,\L_1\cup\L_{2,\infty}$ of measures are (sequentially) compact with respect to the weak convergence. 
\end{proposition}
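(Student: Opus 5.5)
The plan is to work with the parametrization \eqref{eq:f_{A,B,u,c,h,k}} and to extract a.e.-convergent subsequences of the parameters. Take a sequence $f_n=f_{A_n,B_n,u_n,c_n,h_n,k_n}\in\F_2$.

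\emph{Step 1: a priori bounds.} By \eqref{eq:f le1}, $c_n=\max f_n\le 1$. Since $f_n$ has unit variance and mean $0$, and a log-concave density of unit variance is bounded below near its mode, we should also get $c_n\ge c_0>0$. Likewise $|u_n|$ should be bounded, since the mode of an isotropic log-concave density is at distance $\le\sqrt3$ from the mean. The condition $A_n,B_n\ge$ some positive constant follows from \eqref{eq:exp-decay} together with the unit variance. The slopes $h_n,k_n$ may be unbounded only when the corresponding truncation is nearby, and even then they remain bounded. Indeed, if $h_n\to\infty$ with $A_n$ large, the mass to the left of $u_n$ would be $\le c_n/h_n\to0$; the density would then essentially be $c_ne^{-k_n(x-u_n)}$ on $(u_n,B_n)$. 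Mean zero would then force $u_n<0$ with mass concentrated to the right, and the left part would contribute an arbitrarily small amount. I would instead avoid an explicit bound on the slopes: pass to a subsequence along which all six parameters converge in the extended reals $\oR$.

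\emph{Step 2: identify the limit.} Along such a subsequence, $f_n(x)$ converges for every $x$ outside the at most three points $-A,u,B$ (limits of $-A_n,u_n,B_n$) to $g(x)=ce^{h(x-u)}1(-A<x\le u)+ce^{-k(x-u)}1(u<x<B)$. Here slopes of $\pm\infty$ produce degenerate pieces: a slope $h=+\infty$ kills the left piece, while a slope $h=-\infty$ is incompatible with $c_n\le1$ and the integrability bounds. The uniform tail bound \eqref{eq:exp-decay}, together with $f_n\le1$, gives uniform integrability of $x^2 f_n$. Dominated convergence then yields $\int g=1$, $\int xg=0$ and $\int x^2g=1$. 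Hence $g$ (modified at finitely many points to be l.s.c.) is in $\F_2$. The same argument works for $\F_1$, where $h_n=-k_n$ is preserved in the limit. For $\F_1\cup\F_{2,\infty}$, split into the two subsequence cases. In the $\F_{2,\infty}$ case, $A_n=B_n=\infty$ stay infinite and, by Theorem~\ref{th:par2}, $h_n\in(1,\infty)$ with $1/h_n^2+1/k_n^2=1$. If $h_n\to h\in(1,\infty)$ the limit is in $\F_{2,\infty}$. If $h_n\to\infty$ then $k_n\to1$, $c_n\to1$, $u_n\to-1$, and the limit is $e^{-(x+1)}1(x>-1)$, which lies in $\F_1$; in fact it is the $r=\infty$ member of Theorem~\ref{th:par1}. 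Symmetrically, $h_n\to1$ gives the $r=-\infty$ member. For $\F_1$ one can alternatively use Theorem~\ref{th:par1} directly: take $r_n\to r\in\oR$ along a subsequence and use the continuity of $\A,\B,\c,\h$ on $\oR$.

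\emph{Step 3: measures.} If $f_n\to f$ a.e. with all of them densities, Scheffé's lemma gives total-variation convergence, hence weak convergence of the measures. Conversely, given measures in $\L_2$, etc., apply Step 2 to their l.s.c. densities; the limit measure lies in the same class. Sequential compactness then yields compactness, since $\PP$ with the weak topology is metrizable.

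The main obstacle is Step 2 in the degenerate regimes where some parameters diverge (e.g.\ $h_n\to\infty$ with $A_n$ finite, or $A_n\to\infty$ with $h_n\to0$). In these regimes one must rule out loss of mass or a limit lying outside $\F_2$. I would handle this uniformly via the tail bound \eqref{eq:exp-decay} and $f_n\le1$: these imply the limit has total mass $1$ and the moment constraints, which excludes all pathological limits. The shape of the limit is automatically of the form \eqref{eq:2-piece}, as a pointwise limit of minima of affine exponents on intervals.
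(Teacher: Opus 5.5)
\paragraph{Verdict: the $\F_2$ part of Step~2 has a genuine gap.}

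\paragraph{What is correct.}
Your treatment of $\F_1$ and $\F_1\cup\F_{2,\infty}$ via Theorems~\ref{th:par1} and \ref{th:par2} is sound: compactness of $\oR\ni r$ (resp.\ of $h\in[1,\infty]$) plus continuity of $\A,\B,\c,\h$ does the job. The mass/moment argument from \eqref{eq:exp-decay} and $f_n\le1$, and the Scheff\'e step, are also fine. Both, however, presuppose that $f_n\to g$ a.e.\ with $g$ two-piece exponential. That presupposition is exactly where the proposal breaks for $\F_2$, which is the case actually used later (weak compactness of $\L_2$ in the proof of Theorem~\ref{rem:H}).

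\paragraph{Where it breaks.}
In \eqref{eq:f_{A,B,u,c,h,k}}, $u$ is the kink and $c=f(u)$; these are not the mode and the maximum. If $h<0<k$, the density decreases on its whole support, and for one-piece members $u$ is not even determined. So your Step~1 claims $c_n=\max f_n$, $c_n\ge c_0>0$ and $|u_n|$ bounded are false. Consequently, convergence of the six parameters in $\oR$ does not determine a pointwise limit. Two examples:
\begin{enumerate}[(i)]
\item $e^{-1-x}1(x>-1)$ equals $f_{1,\infty,n,e^{-1-n},-1,1}$ for every $n$.
\item Beyond this artifact of non-uniqueness, $\F_2$ contains genuinely two-piece densities whose kink escapes to infinity. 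Take $Ce^{-\ka x}$ on $(-A,n]$, continue it with exponent slope $-2$ beyond $n$, and restore isotropy by adjusting $(A,\ka,C)$ near $(1,1,e^{-1})$ via the implicit function theorem; the moment perturbation is $O(n^2e^{-n})$.
\end{enumerate}
In both examples $c_n\to0$ and $u_n\to\infty$. Then $c_ne^{h_n(x-u_n)}$ is of the form $0\cdot\infty$, and its limit is governed by $\log c_n-h_nu_n$, which your parameter limits do not capture; your formula for $g$ is meaningless there. Likewise, your closing sentence that the limit is ``automatically'' of the form \eqref{eq:2-piece} is precisely what needs proof.

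\paragraph{What is missing and how the paper supplies it.}
You need a normalization-free passage to the limit in the affine exponents. The paper writes $f_n=e^{l_n}$ on $(-A_n,u_n)$ and $f_n=e^{r_n}$ on $(u_n,B_n)$ with $l_n,r_n$ affine. It lets only $A_n,B_n,u_n$ converge in $\oR$, with $\mu_n\to\mu\in\L$ weakly. It then extracts diagonally limits $l_n(x)\to\la(x)\in[-\infty,\infty]$ on a countable dense $D\subset(-A,u)$, and splits into cases:
\begin{itemize}
\item two points with finite limits force $l_n\to l$ affine everywhere;
\item two points with limit $+\infty$ contradict $\mu_n(\R)=1$;
\item otherwise $l_n\to-\infty$ on a dense set, and that piece carries no $\mu$-mass.
\end{itemize}
The same is done for the $r_n$. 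The case where both pieces vanish is excluded because $\mu$ is a non-atomic probability measure, and $f_n\le1$ with dominated convergence on compacts identifies the limit as the density of $\mu$. An escaping kink is handled automatically, since then $(-A,u)$ or $(u,B)$ is empty.

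You could instead re-anchor each exponent at two fixed points, or invoke locally uniform convergence of $\log f_n$ on the interior of the support of the log-concave limit. Some such argument has to replace your Step~2.
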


\begin{proof}[Proof of Proposition~\ref{prop:L_2}]
%Let $(\mu_n)$ be a sequence in $\L_2$ converging weakly to a measure $\mu\in\PP$. Then (cf.\ \eqref{eq:nu in L}) $\mu\in \L$. For each $n$, $\mu_n$ has a density $f_n$ such that for all real $x$ 
Let $(f_n)$ be any sequence in $\F_2$. Then 
% converging a.e.\ to a function $f$. Then, for each $n$ and all real $x$,
\begin{equation}\label{eq:F_2}
	f_n(x)=\begin{cases}
	e^{l_n(x)}&\text{ if }-A_n<x<u_n, \\ 
	e^{r_n(x)}&\text{ if }u_n<x<B_n, \\ 
	0&\text{ if }x\notin(-A_n,B_n), 
	\end{cases}
\end{equation}
where $-\infty\le-A_n\le B_n\le\infty$, $-A_n\le u_n\le B_n$, and $l_n$ and $r_n$ are affine functions on $\R$ such that $l_n(u_n)=r_n(u_n)$ if $u_n\in\R$. 

For each $n$, let $\mu_n\in\L_2$ be the measure with density $f_n$. 

Passing to a subsequence, wlog we have $A_n\to A$, $B_n\to B$, $u_n\to u$ for some $A,B,u$ such that $-\infty\le-A\le u\le B\le\infty$, and we also wlog have $\mu_n\to\mu$ weakly for some $\mu\in\L$ (cf.\ \eqref{eq:nu in L}). 

Let $D$ be an at most countable dense subset of the interval $(-A,u)$; note that $D\subset\R$. By diagonal selection, wlog there is a function $\la\colon D\to[-\infty,\infty]$ such that 
\begin{equation*}
	l_n(x)\to\la(x)\text{ for all }x\in D. 
\end{equation*}

One of the following three cases must occur. 

\emph{Case 1: There are distinct points $x_0$ and $x_1$ in $D$ such that $\la(x_0)$ and $\la(x_1)$ are real numbers.} 

\noindent With $x_t$ as defined in \eqref{eq:x_t}, 
%\begin{equation}\label{eq:x_t}
%	x_t:=(1-t)x_0+tx_1
%\end{equation}
%for real $t$, 
we have the bijection $\R\ni t\mapsto x_t\in\R$ 
and 
\begin{equation*}
	l_n(x_t)=(1-t)l_n(x_0)+tl_n(x_1)\to(1-t)\la(x_0)+t\la(x_1)=:l(x_t),
\end{equation*}
so that $l_n$ converges to the affine function $l$ pointwise on $\R$ and hence $f_n(x)\to e^{l(x)}$ for each $x\in(-A,u)$. 

\emph{Case 2: There are points $x_0$ and $x_1$ in $D$ such that $x_0<x_1$ and  $\la(x_0)=\infty=\la(x_1)$.} 

\noindent Then $l_n\to\infty$ uniformly on the interval $[x_0,x_1]$, and hence $\mu_n([x_0,x_1])\to\infty$, which contradicts the condition that $\mu_n\in\PP$. So, Case~2 is actually impossible.

\emph{Case 3: There is at most one point in $D$ with a real value of the function $\la$ and there is at most one point in $D$ with value $\infty$ of $\la$}. 

\noindent Then $\la=-\infty$ on a subset $D_*$ of $D$ that differs from $D$ by at most two points. So, $D_*$ is an at most countable dense subset of the interval $(-A,u)$. For any points $x_0$ and $x_1$ in $D_*$ such that $x_0<x_1$, $l_n\to-\infty$ uniformly on the interval $[x_0,x_1]$, and hence $\mu_n([x_0,x_1])\to0$ and $\mu((x_0,x_1))=0$. So, in Case~3 we have $\mu((-A,u))=0$. 

A similar triple of cases involves the $r_n$'s, $u$, and $B$, rather than the $l_n$'s, $u$, and $-A$. 

In each of the two triples of cases, the second case is impossible. So, pairing the first or the third case of the triple of cases involving the $l_n$'s, $u$, and $-A$ with the first or the third case of the triple of cases involving the $r_n$'s, $u$, and $B$, we see that one of the following four cases must occur.  

\emph{Case 11: $f_n(x)\to e^{l(x)}$ for each $x\in(-A,u)$ and $f_n(x)\to e^{r(x)}$ for each $x\in(u,B)$, where $l$ and $r$ are affine functions on $\R$; moreover, $l'_n\to l'$ if $-A<u$ and $r'_n\to r'$ if $u<B$.} 

\noindent In this case, if $-A<u<B$, then $u_n\in\R$ eventually, so that $l_n(u_n)=r_n(u_n)$, $|l_n(u_n)-l_n(u)|=|l'_n|\,|u_n-u|$, and $|r_n(u_n)-r_n(u)|=|r'_n|\,|u_n-u|$, whence $l(u)-r(u)=\lim_n(l_n(u)-r_n(u))=0$, so that $l(u)=r(u)$. Also, for each $x\in(-\infty,-A)\cup(B,\infty)$ eventually $f_n(x)=0$. Thus, letting 
\begin{equation*}
	f(x):=
	\begin{cases}
	e^{l(x)}&\text{ if }-A<x<u, \\ 
	e^{r(x)}&\text{ if }u<x<B, \\ 
	e^{l(u)}=e^{r(u)}&\text{ if }-A<x=u<B, \\ 
	0&\text{ if }x\notin(-A,B), 
	\end{cases}
\end{equation*}
we have $f_n(x)\to f(x)$ for $x\in\R\setminus\{-A,u,B\}$. Also, $l'_n\ge r'_n$ for all $n$ and hence $l'\ge r'$, so that 
$f$ is log concave. 
%, since the $f_n$'s are log concave. 
We conclude that $f$ is a log-concave two-piece exponential function. 

\emph{Case 13: $f_n(x)\to e^{l(x)}$ for each $x\in(-A,u)$, $f_n(x)\to0$ for each $x\in(u,B)$, and hence $\mu((u,B))=0$, where $l$ is an affine function on $\R$; moreover, $l'_n\to l'$ if $-A<u$.} 

\noindent In this case, letting 
\begin{equation*}
	f(x):=
	\begin{cases}
	e^{l(x)}&\text{ if }-A<x<u, \\ 
	0&\text{ if }u\le x<B, \\ 
	0&\text{ if }x\notin(-A,B),  
	\end{cases}
\end{equation*}
we again have $f_n(x)\to f(x)$ for $x\in\R\setminus\{-A,u,B\}$,  
and $f$ is a one-piece exponential function. 
% (with $l_1=l_2=l$ and $u$ in place of $B$ in \eqref{eq:2-piece}). 

\emph{Case 31: $f_n(x)\to0$ for each $x\in(-A,u)$, $f_n(x)\to e^{r(x)}$ for each $x\in(u,B)$, so that $\mu((-A,u))=0$, where $r$ is an affine function on $\R$; moreover, $r'_n\to r'$ if $u<B$.} 

\noindent This case is quite similar to Case~13. 

\emph{Case 33: $f_n(x)\to 0$ for each $x\in\R\setminus\{-A,u,B\}$.} 

\noindent Here $\mu((-A,u))=0=\mu((u,B))$.  
Moreover, $\mu$ is supported on $[-A,B]$ and is non-atomic. Hence, $\mu(\R)=0$ while $\mu\in\PP$. 
So, Case~33 is actually impossible. 

%\noindent So, letting here $f(x):=0$ for all real $x$, we again have $f_n(x)\to f(x)$ for $x\in\R\setminus\{-A,u,B\}$, and $f$ is a two-piece exponential function (with $-A=B$ in \eqref{eq:2-piece}). 

%%This contradicts the conclusion, noted above, that $f_n$ converges pointwise on $\R\setminus\{-A,u,B\}$ to a density of the probability measure $\mu$. So, Case~33 is actually impossible. 
%a.e.\ on $\R$. 
%
%In this case, $\mu(\R)=\int_R f=0$, which contradicts the condition $\mu\in\PP$. 

Thus, 
%of the four cases -- Case~11, Case~13, Case~31, and Case~33 -- only the first three ones are possible. 
%In 
in each of three possible cases -- Case~11, Case~13, and Case~31 -- we had a two-piece exponential function $f\colon\R\to\R$ such that $f_n\to f$ pointwise on $\R\setminus\{-A,u,B\}$ and thus 
a.e.\ on $\R$. Also, by %\cite[Lemma~5.5(a)]{lovasz-vempala}
\eqref{eq:f le1}, $f_n\le1$ for all $n$. So, taking any compact interval $K$, by dominated convergence we will have $\mu_n(K)=\int_K f_n\to\int_K f$, and we also have $\mu_n(K)\to\mu(K)$, since $\mu$ is absolutely continuous and hence non-atomic. 
%
%For each $n$, $f_n$ is in $\F^2$ and hence is a density of a measure $\mu_n\in\L_2$. Passing to a subsequence, wlog we may assume that $\mu_n\to\mu$ weakly for some $\mu\in \L$, so that $\mu$ is absolutely continuous and hence non-atomic, which implies $\mu_n(K)\to\mu(K)$ for $K$ as above. 
So, $\mu(K)=\int_K f$, for every compact interval $K$, which means that $f$ is a density of $\mu$. 
%%(One might now notice that $f$ being a density of the probability measure $\mu$ makes Case~33 actually impossible.)
So, $\mu$ is in $\L_2$ and $f\in\F_2$. 

It remains to note the following: (i) if $f_n\in\F_1$ for each $n$, then \eqref{eq:F_2} will hold with $u_n=B_n$ for each $n$, so that $u=B$ and hence $f\in\F_1$ and (ii) if $f_n\in\F_{2,\infty}$ for each $n$, then \eqref{eq:F_2} will hold with $A_n=B_n=\infty$ for each $n$, so that one of the three possible cases -- Case~11, Case~13, or Case~31 -- will occur with $A=B=\infty$, and in each of such cases we will have $f\in\F_1\cup\F_{2,\infty}$. 
%
%$A=B=\infty$ and hence 
%
%$f\in\F_1$ if $u\in\{-A,B\}$ and $f\in\F_{2,\infty}$ if $u\notin\{-A,B\}$. 
%%Also, $f$ is log concave, as an a.e.\ limit of the log-concave functions $f_n$. 

%%%It remains to consider the four mentioned cases to show that in each of those case the function $f$ is a.e.\ equal to a two-piece exponential function, so that $\mu\in \L_2$. 

This completes the proof of Proposition~\ref{prop:L_2}.
\end{proof}

Take now any sequence $(\la_n)$ in $\L_2$ such that 
$\int_\R H\,d\la_n\to\inf_{\la\in\L_2}\int_\R H\,d\la$. By the weak compactness of $\L_2$, established in Proposition~\ref{prop:L_2}, wlog $\la_n\to\la_\infty$ weakly for some $\la_\infty\in\L_2$. 
Repeating the argument \eqref{eq:H_z}--\eqref{eq:liminf} with the $\la_n$'s instead of the $\nu_n$'s, we will get 
\begin{equation}\label{eq:inf}
	\inf_{\la\in\L_2}\int_\R H\,d\la=\lim_n \int_\R H\,d\la_n
	\ge \int_\R H\,d\la_\infty. 
\end{equation}
Thus, the infimum in \eqref{eq:inf} is attained. 

On the other hand, by \eqref{eq:mu ge nu}, for any $\mu\in\L$, 
$\int_\R H\,d\mu\ge\inf_{\la\in\L_2}\int_\R H\,d\la$ and hence 
$\int_\R H\,d\mu\ge\int_\R H\,d\la_\infty$. It remains to recall that the measure $\la_\infty$ is in $\L_2$ and its construction did not depend on the choice of $\mu$. 

Thus, the proof of Theorem~\ref{rem:H} is completed. 

\section{Proofs of Theorem~\ref{th:4 zeros} and of Corollary~\ref{cor:b}}\label{4 zeros proof} 

Here we will need the following three lemmas, whose proofs will be given at the end of this section. 

\begin{lemma}\label{lem:2 zeros}
Suppose that $-\infty\le a<b\le\infty$ and 
$F\colon(a,b)\to\R$ is a continuous function such that $\int_a^b F=0=\int_a^b xF(x)\,dx$. Then $F$ has at least two zeros. 
\end{lemma}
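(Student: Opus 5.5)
We argue by contradiction, using the orthogonality of $F$ to the affine functions $1$ and $x$. Implicit in the statement is that $F$ and $xF(x)$ are integrable on $(a,b)$ and that $F$ is not identically zero (otherwise every point is a zero and there is nothing to prove). So assume $F$ has at most one zero in $(a,b)$ and derive a contradiction. There are two cases, according to whether $F$ has no zero or exactly one zero.

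\emph{Case 1: no zeros.} By continuity and the intermediate value theorem, $F$ has constant sign on $(a,b)$, say $F>0$. Then $\int_a^b F>0$, which contradicts $\int_a^b F=0$.

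\emph{Case 2: exactly one zero $z\in(a,b)$.} Then $F$ has constant sign on $(a,z)$ and on $(z,b)$. If the two signs agree, $\int_a^b F\ne0$, as in Case~1. So $F$ changes sign at $z$; wlog $F<0$ on $(a,z)$ and $F>0$ on $(z,b)$. The key step is to combine the two moment conditions into one:
\begin{equation*}
	\int_a^b (x-z)F(x)\,dx=\int_a^b xF(x)\,dx-z\int_a^b F=0 .
\end{equation*}
Here the integrand $(x-z)F(x)$ is a product of two factors that are both negative on $(a,z)$ and both positive on $(z,b)$. Hence it is strictly positive on $(a,b)\setminus\{z\}$, so its integral is strictly positive. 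This is a contradiction.

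Both cases are impossible, so $F$ has at least two zeros. The only point needing care is the integrability bookkeeping when $a=-\infty$ or $b=\infty$: the integral of $(x-z)F(x)$ is the linear combination of two finite integrals, hence well defined. Its strict positivity then follows because the integrand is continuous and nonnegative, and it is not identically zero. I expect no real obstacle here; the single idea is the test function $x-z$, which has the same sign pattern as $F$.
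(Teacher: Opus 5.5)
Your proof is correct and follows the paper's argument essentially step for step. Both rule out the no-zero case by constant sign, show that a single zero $c$ forces opposite signs on $(a,c)$ and $(c,b)$, and then contradict $\int_a^b (x-c)F(x)\,dx=\int_a^b xF(x)\,dx-c\int_a^b F=0$.
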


\begin{lemma}\label{lem:perturb}
Suppose that a p.d.f.\ $f$ of a r.v.\ $X$ is of bounded variation. Let $\nu$ be the finite signed measure defined by the formula 
\begin{equation*}
	\nu(dx)=f(x)dx+\frac1k\,df(x)
\end{equation*}
for some real $k>0$. Let $e_k(x):=ke^{-kx}\,1(x>0)$ for real $x$, so that 
$e_k$ is a p.d.f.\ of the exponential distribution with mean $1/k$. Then 
\begin{equation*}
	f=\nu*e_k.
\end{equation*}
It follows that, if $\nu$ is a nonnegative measure and $\E X^2<\infty$, then $\Var X\ge1/k^2$; if, moreover, the measure $\nu$ is nondegenerate, then $\Var X>1/k^2$.
%If $\nu$ is a (nonnegative) measure, then $\nu$ is the probability distribution of a r.v.\ $Y$.  
\end{lemma}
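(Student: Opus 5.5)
The plan is to verify the convolution identity directly by integration by parts. Then I will read off the variance statement from the decomposition $X\overset{d}{=}Y+E$ with $Y$ and $E$ independent.

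\emph{Step 1: the convolution identity.} Since $f$ is of bounded variation, it is bounded and has one-sided limits everywhere. It also has limits at $\pm\infty$, and these must equal $0$ because $f$ is integrable. Fix a real $x$ and write
\begin{equation*}
	(\nu*e_k)(x)=\int_{(-\infty,x)}ke^{-k(x-y)}f(y)\,dy+e^{-kx}\int_{(-\infty,x)}e^{ky}\,df(y).
\end{equation*}
I will integrate the Lebesgue--Stieltjes integral by parts on $(-M,x)$ and let $M\to\infty$. This gives
\begin{equation*}
	\int_{(-\infty,x)}e^{ky}\,df(y)=e^{kx}f(x-)-\lim_{M\to\infty}e^{-kM}f(-M)-\int_{-\infty}^x ke^{ky}f(y)\,dy .
\end{equation*}
The middle term vanishes by the boundedness of $f$. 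The last term cancels the first summand in the display for $(\nu*e_k)(x)$, so $(\nu*e_k)(x)=f(x-)$. This equals $f(x)$ for all but countably many $x$, since a BV function has at most countably many jumps. Thus $f=\nu*e_k$ as densities, that is, a.e.

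A small technical point needs care here: the half-open versus closed endpoint convention at $y=x$, and the version of $f$. I expect this bookkeeping to be the only delicate part, and it is harmless because densities are defined only a.e.

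\emph{Step 2: total mass of $\nu$.} The total mass is
\begin{equation*}
	\nu(\R)=\int_\R f+\tfrac1k\big(f(\infty)-f(-\infty)\big)=1.
\end{equation*}
Hence, if $\nu\ge0$, then $\nu$ is a probability measure, the distribution of some r.v.\ $Y$. By Step 1, $X\overset{d}{=}Y+E$, where $E$ has density $e_k$ and is independent of $Y$.

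\emph{Step 3: $Y$ has a finite second moment.} Since $\E(Y+E)^2=\E X^2<\infty$, Fubini gives $\E(Y+e)^2<\infty$ for a.e.\ value $e$ of $E$. Picking one such $e$ yields $\E Y^2<\infty$.

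\emph{Step 4: the variance bound.} By independence,
\begin{equation*}
	\Var X=\Var Y+\Var E=\Var Y+1/k^2\ge1/k^2 .
\end{equation*}
Equality holds iff $\Var Y=0$, that is, iff $\nu$ is degenerate. So if $\nu$ is nondegenerate, then $\Var X>1/k^2$.
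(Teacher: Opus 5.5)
Your proof is correct. Its second half is the same as in the paper: $\nu(\R)=1$, the decomposition of $X$ in distribution as $Y+E$ with independent $Y\sim\nu$ and exponential $E$, and additivity of variances.

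The difference is in how you prove $f=\nu*e_k$. The paper integrates by parts only to obtain $\widehat{df}(t)=-it\hat f(t)$, so that $\hat\nu(t)=(1-it/k)\hat f(t)$. It then combines this with $\widehat{e_k}(t)=1/(1-it/k)$ and the uniqueness theorem for Fourier--Stieltjes transforms of finite signed measures. You instead compute the convolution pointwise, by Lebesgue--Stieltjes integration by parts on $(-M,x)$. (Strictly, the boundary term at $-M$ is $e^{-kM}f(-M+)$, which vanishes just as well.)

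The Fourier argument is shorter, and it shows why $e_k$ is the right kernel: its transform inverts the symbol of $1+\frac1k\frac{d}{dx}$. Your argument is more elementary and avoids the uniqueness theorem. It also gives the sharper pointwise identity $(\nu*e_k)(x)=f(x-)$ for every real $x$, which settles which version of the density the convolution produces.

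You also verify explicitly, via Fubini, that $\E Y^2<\infty$. This is needed for $\Var X=\Var Y+1/k^2$, and the paper leaves it implicit.
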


\begin{lemma}\label{lem:k>1}
Take any $f\in\F_2\setminus(\F_1\cup\F_{2,\infty})$ and consider the parametrization \eqref{eq:f_{A,B,u,c,h,k}}. If $B=\infty$, then $k>1$. If $A=\infty$, then $h>1$. 
\end{lemma}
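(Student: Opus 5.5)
The plan is to apply Lemma~\ref{lem:perturb} with the exponential rate equal to the right-hand slope $k$. The case $A=\infty$ will follow from the case $B=\infty$ by the reflection $x\mapsto-x$. This reflection preserves $\L$ and $\F_2$, as well as $\F_1$ and $\F_{2,\infty}$, and it interchanges the roles of $(A,h)$ and $(B,k)$ in \eqref{eq:f_{A,B,u,c,h,k}}. So it suffices to treat $B=\infty$.

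\emph{Reading off the shape of $f$.} Suppose $B=\infty$, and let $X$ have density $f=f_{A,B,u,c,h,k}$. As noted after \eqref{eq:A,B,u,c,h,k}, $B=\infty$ forces $k>0$. Since $f\notin\F_{2,\infty}$ and $B=\infty$, we must have $A<\infty$. Since $f\notin\F_1$, $f$ is not one-piece exponential. This requires $-A<u$; if $u=-A$ then $f=ce^{-k(x-u)}1(u<x)$ would be one-piece. It also requires $h>-k$ strictly, since $h=-k$ gives a single exponential. So
\begin{equation*}
f(x)=ce^{h(x-u)}1(-A<x\le u)+ce^{-k(x-u)}1(x>u),\qquad -\infty<-A<u<\infty,\quad h>-k,\quad k>0.
\end{equation*}
This $f$ is of bounded variation: it is piecewise monotone and bounded, with a single jump at $-A$ and continuity at $u$.

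\emph{Computing $\nu$.} Let $\nu(dx)=f(x)\,dx+\frac1k\,df(x)$. The measure $df$ has three parts:
\begin{itemize}
\item an atom at $-A$ of mass $f(-A+)=ce^{-h(A+u)}>0$;
\item density $f'=hf$ on $(-A,u)$;
\item density $f'=-kf$ on $(u,\infty)$.
\end{itemize}
There is no atom at $u$ because $f$ is continuous there. Hence
\begin{equation*}
\nu=\tfrac1k\,ce^{-h(A+u)}\,\delta_{-A}+\Big(1+\tfrac hk\Big)f(x)1(-A<x<u)\,dx,
\end{equation*}
and the contribution on $(u,\infty)$ cancels exactly. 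Since $1+h/k=(k+h)/k>0$, $\nu$ is a nonnegative measure. It is nondegenerate, since it has an atom at $-A$ and a strictly positive density on the nonempty interval $(-A,u)$.

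\emph{Concluding.} By \eqref{eq:exp-decay}, $\E X^2<\infty$; in fact $\E X^2=1$ because the law of $X$ is in $\L$. Lemma~\ref{lem:perturb} then gives $1=\Var X>1/k^2$, so $k>1$.

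The only delicate point is checking that $\nu\ge0$ and that it is nondegenerate. This rests on extracting from $f\notin\F_1\cup\F_{2,\infty}$ the three facts $A<\infty$, $-A<u$ and $h>-k$. Once those are in hand, the rest is immediate.
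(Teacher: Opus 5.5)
Your proposal is correct and follows essentially the same route as the paper. You compute $\nu=f\,dx+\frac1k\,df$, note that the part on $(u,\infty)$ cancels, and observe that what remains, $\frac ck e^{-h(A+u)}\delta_{-A}+\frac{h+k}{k}f\,1(-A<x<u)\,dx$, is nonnegative and nondegenerate because $h>-k$ and $-A<u$. Lemma~\ref{lem:perturb} then gives $1=\Var X>1/k^2$, and the case $A=\infty$ follows by reflection. Your extra observation that $A<\infty$ is forced is correct but not needed: the paper allows $A=\infty$ with the atom omitted, and the absolutely continuous part alone already gives nondegeneracy.
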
 

%This lemma will be proved at the very end of this section.

\begin{proof}[Proof of Theorem~\ref{th:4 zeros}]
By Theorem~\ref{rem:H}, the infima in \eqref{eq:Th1.1} are attained at some $\mu\in\L_2$. Equivalently, $\inf_{f\in\F_2}\int_\R Hf$ is attained. It is enough to show that any minimizer of $\int_\R Hf$ over $f\in\F_2$ is actually in $\F_1\cup\F_{2,\infty}$. 

%The p.d.f.'s $f\in\F_2$ can be parametrized by the formula 
%\begin{equation}\label{eq:f_{A,B,u,c,h,k}}
%	f(x)=f_{A,B,u,c,h,k}(x):=c e^{h(x-u)}1(-A<x\le u)+c e^{-k(x-u)}1(u<x<B)
%\end{equation}
%for all real $x$, where 
%\begin{equation}\label{eq:A,B,u,c,h,k}
%\begin{gathered}
%	\text{$-\infty\le-A\le u\le B\le\infty$, $-A<B$, $u\in\R$, $c\in(0,\infty)$,} \\ 
%	\text{and $h$ and $k$ are real numbers such that $h\ge-k$.} 
%\end{gathered}	
%\end{equation}
%One may note here that necessarily $A>0$ and $B>0$, because otherwise the barycenter of the probability distribution $\mu$ with density $f$ could not be $0$, violating the condition $\mu\in\L_2\subseteq\L$. 

To obtain a contradiction, suppose that $f_{A,B,u,c,h,k}\in\F_2$ is a minimizer of $\int_\R Hf$ over $f\in\F_2$ but $f_{A,B,u,c,h,k}\notin\F_1\cup\F_{2,\infty}$. Then $-A<u<B$, $h\ne-k$, and at least one of the extended real numbers $A,B$ is real. 
For real $\la_0,\dots,\la_3,x$, let 
\begin{equation*}
	R_{\la_0,\dots,\la_3}(x):=\la_0 H(x)-\la_1-\la_2 x-\la_3 x^2. 
\end{equation*}
For any $(\la_0,\dots,\la_3)\in\R^4$, define the function $\Psi:=\Psi_{\la_0,\dots,\la_3}$ by the formula 
\begin{equation*}
	\Psi_{\la_0,\dots,\la_3}(\tilde A,\tilde B,\tilde u,\tilde c,\tilde h,\tilde k)
	:=\int_\R R_{\la_0,\dots,\la_3}f_{\tilde A,\tilde B,\tilde u,\tilde c,\tilde h,\tilde k}
\end{equation*}
for $(\tilde A,\tilde B,\tilde u,\tilde c,\tilde h,\tilde k)$ satisfying the corresponding conditions in \eqref{eq:A,B,u,c,h,k}. 

By the Lagrange--Carath\'eodory multiplier rule \cite[p.~441]{pourciau}, there exists a nonzero quadruple $(\la_0,\dots,\la_3)\in\R^4$ such that for $R:=R_{\la_0,\dots,\la_3}$ at the tuple $(A,B,u,c,h,k)$ corresponding to the minimizer $f=f_{A,B,u,c,h,k}\in\F_2$ of $\int_\R Hf$ over $f\in\F_2$
%\break  
we have, in view of the definition in \eqref{eq:f_{A,B,u,c,h,k}}, 
\begin{equation}\label{eq:ints}
\begin{aligned}
	0&=\dd_c \Psi(A,B,u,c,h,k)=\frac1c\int_{-A}^u Rf+\frac1c\int_u^B Rf,  \\ 
	0&=\dd_u \Psi(A,B,u,c,h,k)=-h\int_{-A}^u Rf+k\int_u^B Rf,  \\ 
	0&=\dd_h \Psi(A,B,u,c,h,k)=\int_{-A}^u (x-u)R(x)f(x)\,dx,  \\ 
	0&=\dd_k \Psi(A,B,u,c,h,k)=-\int_u^B (x-u)R(x)f(x)\,dx,  
\end{aligned}	
\end{equation}
and also 
\begin{equation}\label{eq:R(-A)}
	0=\dd_A \Psi(A,B,u,c,h,k)=ce^{h(-A-u)}R(-A) 
\end{equation}
if $A\in\R$ or
\begin{equation}\label{eq:R(B)}
	0=\dd_B \Psi(A,B,u,c,h,k)=ce^{-k(B-u)}R(B) 
\end{equation}
if $B\in\R$. 
To obtain the second equalities in the last two of the four formulas in the 4-line display \eqref{eq:ints}, we used interchanging a derivative with
an integral -- see e.g.\ \cite[Theorem~2.27(b)]{folland}. In turn, to use the latter theorem, we recall condition \eqref{eq:H le,v} and use Lemma~\ref{lem:k>1}. 
%the following lemma. 

%\begin{lemma}\label{lem:k>1}
%Take any $f\in\F_2\setminus(\F_1\cup\F_{2,\infty})$ and consider the parametrization \eqref{eq:f_{A,B,u,c,h,k}}. If $B=\infty$, then $k>1$. If $A=\infty$, then $h>1$. 
%\end{lemma} 
%
%This lemma will be proved at the very end of this section. 
%after the rest of the proof of Theorem~\ref{th:4 zeros}. 

Indeed, if $B=\infty$, then, in view of Lemma~\ref{lem:k>1}, any possible value of $k$ can be written as $k_*:=1+\ep$ for some real $\ep>0$. Now note that, 
for any possible fixed values of $A,u,c,h$, there is some real $x_\ep\ge u$ such that $I_k(x):=|(x-u)R(x)f_{A,\infty,u,c,h,k}(x)|\le e^{-\ep x/3}$ for all $k$ in the neighborhood $K:=[k_*-\ep/3,k_*+\ep/3]$ of $k_*$ and all $x\ge x_\ep$. Together with the fact that $\sup_{k\in K}\sup_{x\in[u,x_\ep]}I_k(x)<\infty$, this provides an integrable majorant required in \cite[Theorem~2.27(b)]{folland}.
Similarly we deal with $A=\infty$.

By \eqref{eq:ints} and the condition $h\ne-k$, we have $$0=\int_{-A}^u Rf=\int_u^B Rf=\int_{-A}^u x R(x)f(x)\,dx=\int_u^B x R(x)f(x)\,dx.$$ So, by Lemma~\ref{lem:2 zeros}, the function $Rf$ and hence the function $R$ have at least two zeros in the interval $(-A,u)$ and at least two zeros in the interval $(u,B)$. Also, in view of \eqref{eq:R(-A)} and \eqref{eq:R(B)}, $R(-A)=0$ or $R(B)=0$. It follows that the function $R=R_{\la_0,\dots,\la_3}$ has at least $5$ zeros; since the quadruple $(\la_0,\dots,\la_3)$ is nonzero, we have now obtained a contradiction with the condition in Theorem~\ref{th:4 zeros} that the function $R_{\la_1,\la_2,\la_3}$ has at most $4$ zeros (here consider separately the cases $\la_0=0$ and $\la_0\ne0$; in the latter case, wlog $\la_0=1$). 
This completes the proof of Theorem~\ref{th:4 zeros} -- except that it remains to prove Lemmas~\ref{lem:2 zeros}, \ref{lem:perturb}, and \ref{lem:k>1}, which will be done after the proof of Corollary~\ref{cor:b}.  
\end{proof}

\begin{proof}[Proof of Corollary~\ref{cor:b}]
For simplicity of writing, assume that $\ka=1$; the proof for other real values of $\ka$ is almost literally the same. 

Take any real $b$. For any real $\si>0$, let $H_{b,\si}$ be the c.d.f.\ of the normal distribution with mean $b$ and standard deviation $\si$. Then $H_{b,\si}\to H_b:=1_{[b,\infty)}$ pointwise everywhere on $\R\setminus\{b\}$ as $\si\downarrow0$. For each real $\si>0$, condition \eqref{eq:H le} obviously holds for the function $H_{b,\si}$, which is also $4$-good, by Remark~\ref{rem:4 zeros}. So, by Theorem~\ref{th:4 zeros}, for the function $H_{b,\si}$ in place of $H$, the infima in \eqref{eq:Th1.1} are attained at a measure $\mu_{b,\si}\in \L_1\cup\L_{2,\infty}$ with a density $f_{b,\si}\in\F_1\cup\F_{2,\infty}$. 
By 
%the weak compactness of $\L$ and the penultimate paragraph in the proof of 
Proposition~\ref{prop:L_2}, for some sequence $(\si_n)$ of positive numbers converging to $0$ and some measure $\mu_b\in\L_1\cup\L_{2,\infty}$ with a density $f_b\in\F_1\cup\F_{2,\infty}$, we have $\mu_{b,\si_n}\to\mu_b$ weakly and $f_{b,\si_n}\to f_b$ a.e. 

%{\huge !!!!!} need  $f$ to be of the form \eqref{eq:2-piece} where either $l_1=l_2$ or $A=B=\infty$. 

By the weak convergence $\mu_{b,\si_n}\to\mu_b$ and because any measure in $\L$ is non-atomic, we have 
\begin{equation}\label{eq:cor:b1}
	\int_\R H_b\, f_{b,\si_n}=\int_\R H_b\, d\mu_{b,\si_n}\to
	\int_\R H_b\, d\mu_b=\int_\R H_b\, f_b. 
\end{equation}
Now take any real $\ep>0$. Then $H_{b,\si_n}\to H_b$ uniformly on $\R\setminus(b-\ep,b+\ep)$. Therefore and because the $f_{b,\si_n}$'s are p.d.f.'s, 
\begin{equation*}
	\int_{\R\setminus(b-\ep,b+\ep)}|H_{b,\si_n}-H_b|\,f_{b,\si_n}\to0. 
\end{equation*}
Next, 
\begin{equation*}
	\int_{(b-\ep,b+\ep)}|H_{b,\si_n}-H_b|\,f_{b,\si_n}
	\le\int_{(b-\ep,b+\ep)}f_{b,\si_n}\le2\ep, 
\end{equation*}
because, again by %\cite[Lemma~5.5(a)]{lovasz-vempala}
\eqref{eq:f le1}, $f_{b,\si_n}\le1$. 
So, 
\begin{equation}\label{eq:cor:b4}
	\int_\R H_{b,\si_n}\,f_{b,\si_n}-\int_\R H_b\,f_{b,\si_n}\to0. 
\end{equation}
Comparing \eqref{eq:cor:b1} and \eqref{eq:cor:b4}, we see that 
\begin{equation}\label{eq:cor:b5}
	\int_\R H_{b,\si_n}\,d\mu_{b,\si_n}=\int_\R H_{b,\si_n}\,f_{b,\si_n}
	\to\int_\R H_b\,f_b=\int_\R H_b\,d\mu_b. 
\end{equation}

Now take any $\mu\in \L$, with a p.d.f.\ $f$. Then, similarly to \eqref{eq:cor:b4}, we have $\int_\R H_{b,\si_n}\,f\to \int_\R H_b\,f$, that is 
\begin{equation}\label{eq:cor:b6}
	\int_\R H_{b,\si_n}\,d\mu\to \int_\R H_b\,d\mu. 
\end{equation}
By the definition of $\mu_{b,\si_n}$, 
\begin{equation*}
	\int_\R H_{b,\si_n}\,d\mu_{b,\si_n}\le\int_\R H_{b,\si_n}\,d\mu. 
\end{equation*}
Therefore, and in view of \eqref{eq:cor:b5} and \eqref{eq:cor:b6},  
\begin{equation*}
	\int_\R H_b\,d\mu_b\le\int_\R H_b\,d\mu, 
\end{equation*}
for any $\mu\in \L$. This completes the proof of Corollary~\ref{cor:b}. 
\end{proof}

\begin{proof}[Proof of Lemma~\ref{lem:2 zeros}]
If the continuous function $F$ does not have a zero, then either $F>0$ or $F<0$, which will contradict the conditions $\int_a^b F=0$ and $a<b$. So, $F(c)=0$ for some $c\in(a,b)$. Suppose now that $c$ is the only zero of $F$. Then $F(x)$ is of a constant nonzero sign for all $x\in(a,c)$ and of a constant nonzero sign for all $x\in(c,b)$, and these two constant signs must be opposite to each other, because otherwise $F(x)$ would be of a constant nonzero sign for all $x\in(a,b)\setminus\{c\}$, which would again contradict the conditions $\int_a^b F=0$ and $a<b$. So, $(x-c)F(x)$ is of a constant nonzero sign for all $x\in(a,b)\setminus\{c\}$, which finally contradicts the conditions $\int_a^b(x-c)F(x)\,dx=\int_a^b xF(x)\,dx-c\int_a^b F(x)\,dx=0$ and $a<b$. 
\end{proof}

\begin{proof}[Proof of Lemma~\ref{lem:perturb}]
For any finite signed Borel measure $\la$ over $\R$, let $\hat\la$ denote the Fourier transform of $\la$, so that $\hat\la(t)=\int_\R e^{itx}\la(dx)$ for real $t$. If $l$ is a density of $\la$, let $\hat l:=\hat\la$. 

Then, integrating by parts and noting that $f(x)\to0$ as $|x|\to\infty$, for all real $t$ we have $\widehat{df}(t)=-it\hat f(t)$ and hence 
$\hat\nu(t)=(1-it/k)\hat f(t)$. 
Also, $\widehat{e_k}(t)=1/(1-it/k)$ for real $t$. So, $\hat f=\hat\nu \widehat{e_k}$ and hence $f=\nu*e_k$. 

If now $\nu$ is a nonnegative measure, then it is a probability measure, because $(df)(\R)=0$. 
So, it follows from $f=\nu*e_k$ that the variance of $X$ is no less than the variance $1/k^2$ of the exponential distribution with mean $1/k$, and the inequality $\Var X\ge1/k^2$ is strict if the measure $\nu$ is nondegenerate. 
\end{proof}

\begin{proof}[Proof of Lemma~\ref{lem:k>1}]
Suppose that $B=\infty$. Then $k>0$ and for the signed measure $\nu$ as in Lemma~\ref{lem:perturb} we have 
\begin{equation}\label{eq:nu_k}
	\nu(dx)=\frac ck\,e^{-h(A+u)}\de_{-A}(dx)+c\frac{h+k}k\,e^{h(x-u)}\,1(-A<x<u)\,dx,
\end{equation}
where $\de_a$ is the Dirac probability supported on the singleton set $\{a\}$, assuming the first summand in \eqref{eq:nu_k} to be $0$ if $A=\infty$. 
Since $f\in\F_2\setminus(\F_1\cup\F_{2,\infty})$, we have $h>-k$ and $-A<u$. So, here $\nu$ is a nonnegative nondegenerate measure. Therefore, by Lemma~\ref{lem:perturb}, for any r.v.\ $X$ with p.d.f.\ $f$ we have $1=\Var X>1/k^2$. Recalling that $k>0$, we get $k>1$. 

Quite similarly or by reflection $x\leftrightarrow-x$, we get that $A=\infty$ implies $h>1$.
\end{proof}

Now Theorem~\ref{th:4 zeros} is completely proved. 

\section{Proofs of ``parametrization'' Theorems~\ref{th:par1} and \ref{th:par2}}\label{param-proofs}

%\vspace{2cm}

\begin{proof}[Proof of Theorem~\ref{th:par1}]
Take indeed any $f\in\F_1$, so that condition \eqref{eq:f_{A,B,c,h}} holds with $A,B,c,h$ as in \eqref{eq:A,B,c,h}. Recall that $A>0$ and $B>0$. 
%, because otherwise the barycenter of the probability distribution $\mu$ with density $f$ could not be $0$, violating the condition $\mu\in\L_1\subseteq\L$. 

Suppose now that $A$ and $B$ are real numbers. 
By strict convexity, we have $x^2<A^2+(B-A)(x+A)$ for all $x\in(-A,B)$. So, 
\begin{equation*}
	1=\int_\R x^2 f(x)\,dx<\int_\R (A^2+(B-A)(x+A)) f(x)\,dx=BA,
\end{equation*}
since $\int_\R x f(x)\,dx=0$. So, 
\begin{equation}\label{eq:A,B}
	AB>1,\quad A>0,\quad B>0. 
\end{equation}

Suppose further that $h\ne0$. Then the moments $m_j:=\int_\R x^j f(x)\,dx$ for $j=0,1,2$ are as follows: 
\begin{align*}
	m_0&=\frac ch \,(e^{B h}-e^{-A h}), \\ 
	m_1&=\frac c{h^2}\,\big((A h+1)e^{-A h}+(B h-1)e^{B h}\big), \\ 
	m_2&=\frac c{h^3}\,\big(-(A^2 h^2+2 A h+2)e^{-A h}+(B^2 h^2-2 B h+2)e^{B h}\big).  \end{align*}
Further details on the calculations in this proof are given in Mathematica notebook Proof\_Theorem1.7.nb. 
	
Consider the equalities $m_1=0=m_2-m_0$ as a homogeneous system
of two linear equations in $e^{-Ah}$ and $e^{Bh}$. Since this
system has a nonzero solution, its determinant
%Consider the equalities $m_1=0=m_2-m_0$ as a system of two equations for the ``unknowns'' $e^{-A h}$ and $e^{B h}$. Since the values of these ``unknowns'' are nonzero, the determinant 
\begin{equation*}
	\frac{c^2}{h^3}\,(A + B) \big(B-A + (A B-1) h\big)
\end{equation*}
%of this system 
must be $0$. By \eqref{eq:A,B,c,h}, $c\ne0$. So, in view of \eqref{eq:A,B}, $B-A + (A B-1) h=0$ and 
\begin{equation}\label{eq:h}
	h=\frac{A-B}{A B-1}. 
\end{equation}
%On the other hand, solving the system $m_0=1,m_1=0$ for the ``unknowns'' $e^{-A h}$ and $e^{B h}$, we have 
%\begin{equation}
%	(e^{-A h},e^{B h})=\frac1{c(A+B)}(1-Bh,1+Ah). 
%\end{equation}
%Substituting these expressions for $e^{-A h},e^{B h}$ into the 
Hence, 
\begin{equation*}
	0=m_1\frac{h^2}{ec}=(A h+1)e^{-A h-1}+(B h-1)e^{B h-1}
	=a e^{-a}-b e^{-b},
\end{equation*}
where 
\begin{equation}\label{eq:a,b:=}
	a:=Ah+1=\frac{A^2-1}{A B-1},\quad b:=1-Bh=\frac{B^2-1}{A B-1}.
\end{equation}
%If we had $a=0$, then t
The equality 
\begin{equation}\label{eq:a e^{-a}=b e^{-b}}
	a e^{-a}=b e^{-b}
\end{equation}
implies that either $a=b$ or $a,b>0$. If $a=b$, then, in view of \eqref{eq:A,B}, we would have $A=B$ and, by \eqref{eq:h}, $h=0$, which would contradict our current assumption $h\ne0$. So, currently we have $a,b>0$ and $a\ne b$. Now \eqref{eq:a,b:=} and \eqref{eq:A,B} imply 
\begin{equation*}
	A>1,\quad B>1. 
\end{equation*}

Let  
\begin{equation}\label{eq:r:=b-a}
	r:=b-a,
\end{equation}
so that $r\ne0$. Then \eqref{eq:a e^{-a}=b e^{-b}} can be rewritten as $a e^r=b$. Solving the system of linear equations $r=b-a$ and $a e^r=b$ for $a$ and $b$, we get 
\begin{equation}\label{eq:a(r),b(r)}
	a=\frac r{e^r-1},\quad b=\frac{r e^r}{e^r-1}.
\end{equation}
In particular, it follows that %(with $r\in\R\setminus\{0\}$) 
\begin{equation*}
	1-ab=\frac{u(r)}{(e^r-1)^2}>0,
\end{equation*}
with $u(r)$ as defined in \eqref{eq:u,s,si}. 

%would imply $b=0$, whence, in view of \eqref{eq:A,B}, we would have $A=B=1$ and hence $AB=1$, which contradict \eqref{eq:A,B}. So, $a\ne0$ and, similarly, $b\ne0$. 

Solving the equation $a=\frac{A^2-1}{A B-1}$ for $B$, we then get $B=(A^2+a-1)/(aA)$; using this expression for $B$, we have 
\begin{equation*}
	0=\Big(\frac{B^2-1}{A B-1}-b\Big)aA^2=(1-ab)A^2-(a-1)^2. 
\end{equation*}
So, 
\begin{equation}\label{eq:A=A(r)}
	A=\sqrt{\frac{(a-1)^2}{1-ab}}=\A(r). 
\end{equation}
Similarly, 
\begin{equation}\label{eq:B=B(r)}
	B=\B(r). 
\end{equation}
Now \eqref{eq:h} implies 
\begin{equation}\label{eq:h=h(r)}
	h=\h(r). 
\end{equation}
Solving 
\begin{equation*}
	1=m_0=\frac ch \,(e^{B h}-e^{-A h})
\end{equation*}
for $c$ and using \eqref{eq:A=A(r)}, \eqref{eq:B=B(r)}, and \eqref{eq:h=h(r)}, we also get 
\begin{equation}\label{eq:c=c(r)}
	c=\c(r). 
\end{equation}

These expressions for $A,B,c,h$ in terms of $r$ were obtained assuming that $A$ and $B$ are real numbers and $h\ne0$. 

If now $h=0$, then $h=\h(0)$ and $f$ is a p.d.f.\ of a uniform distribution over the interval $(-A,B)$. So, then $A$ and $B$ are real numbers. Moreover, the condition $f\in\F_1$ implies that $f\in\F$ and hence $A=B=\sqrt3=\A(0)=\B(0)$ and $c=\frac1{2\sqrt3}=\c(0)$. 

If $B$ is infinite, then, in view of the condition $B>0$, we have $B=\infty$. This implies that $h<0$ and hence $A\in(0,\infty)$. Recalling that $f\in\F_1\subseteq\F$, it is now easy to see that 
\begin{equation*}
	A=1=\A(\infty),\quad h=-1=\h(\infty),\quad c=\frac1e=\c(\infty),
\end{equation*}
and also $B=\infty=\B(\infty)$. 
Similarly, if $A$ is infinite, then
\begin{equation*}
	A=\infty=\A(-\infty),\quad B=1=\B(-\infty),\quad 
	h=1=\h(-\infty),\quad c=\frac1e=\c(-\infty).
\end{equation*}
Thus, representation \eqref{eq:par1} in terms of $r$ holds for any $f=f_{A,B,c,h}\in\F_1$. 

Moreover, in view of \eqref{eq:a,b:=}, \eqref{eq:r:=b-a}, and \eqref{r=0 or pm infty}, for each $f=f_{A,B,c,h}\in\F_1$, this representation holds for a unique value of $r\in\oR$. Indeed, in the case of finite $A,B$ and $h\ne0$, we have $r=b-a$, and $a$ and $b$ are determined by $A$ and $B$ -- whereas $h=0$, $B=\infty$, and $A=\infty$ correspond uniquely to $r=0,\infty,-\infty$, respectively.
 
Finally, by direct calculation, $f_{\A(r),\B(r),\c(r),\h(r)}\in\F_1$ for any $r\in\oR$. 
\end{proof}

\begin{proof}[Proof of Theorem~\ref{th:par2}] % \tail,A=B=infty.nb
Here for the moments $m_j:=\int_\R x^j f(x)\,dx$ we have 
\begin{equation*}
	1=m_0=c\frac{h+k}{hk}, \text{ whence } c=\frac{hk}{h+k};
\end{equation*}
\begin{equation*}
	0=\frac{m_1}{m_0}=u+\frac1k-\frac1h, \text{ whence } u=\frac1h-\frac1k; 
\end{equation*}
\begin{equation*}
	1=\frac{m_2}{m_0}=\frac{2}{h^2}-\frac{2 (k u+1)}{h k}+\frac{2}{k^2}+\frac{2 u}{k}+u^2=\frac1{h^2}+\frac1{k^2},
\end{equation*}
where the last equality follows upon substituting $\frac1h-\frac1k$ for $u$ in the preceding expression. 
These calculations were done in Mathematica notebook Proof\_Theorem1.8.nb. 

The last equality in \eqref{eq:par2} implies that $h>1$ and $k>1$ and uniquely determines either of $h,k$ in terms of the other: $k=\frac{h}{\sqrt{h^2-1}}$ and $h=\frac{k}{\sqrt{k^2-1}}$. Vice versa, if $c$ and $u$ are expressed in terms of real $h>1$ and $k>1$ as above and if $\frac1{h^2}+\frac1{k^2}=1$, then $\tf_{c,u,h,k}\in\F_{2,\infty}$. 
\end{proof}

\section{Proofs of Theorems~\ref{th:df bounds} and \ref{th:upper bound}} \label{main thms proofs}

\begin{remark}\label{rem:e-poly}
Let us say that a function $\FF\colon I\to\R$ on an interval $I\subseteq\R$ is an E-polynomial (over $\Z$) if $\FF(x)=P(x,e^x)$ for some nonzero polynomial $P(X,Y)\in\Z[X,Y]$ and all $x\in I$. So, with $\EE(x):=e^x$ for $x\in I$, 
\begin{equation*}%\label{eq:E-poly}
	\FF=\sum_{j=0}^k \p_j \EE^j
\end{equation*}
for some nonnegative integer $k$ (which will be referred to as the E-degree of $\FF$) and some polynomials $\p_j(X)\in\Z[X]$, with $\p_k(X)\ne0$. 

We will want to identify the sign patterns of various E-polynomials. 
%, in our case with $p(x)=x$ for all $x$. 

If $k\ge1$ and the polynomial $\p_0$ above is of degree $d_0$ (so that $d_0=-1$ if $\p_0=0$), then $\FF_1:=\FF^{(d_0+1)}/\EE$ is an E-polynomial of E-degree $k-1$, equal to $\FF^{(d_0+1)}$ in sign. Repeating this reduction in the E-degree, we finally obtain an E-polynomial of E-degree $0$, which is just a usual polynomial over $\Z$. 

If now $I$ is an open interval and $G\colon I\to\R$ is any continuously differentiable function with finitely many zeros of $G'$, then the sign pattern of $G'$ determines the monotonicity pattern of $G$. 

If, in addition, $G(x)\ne0$ for any zero $x$ of $G'$, then the sign pattern of $G'$, together with the signs of $G$ at the zeros of $G'$ and near the endpoints of the interval $I$, will determine the sign pattern of $G$ on $I$. The potentially difficult case is when $G$ has a multiple root $x\in I$, so that $G(x)=G'(x)=0$. 

If such a potentially difficult case is not encountered during the mentioned E-degree reduction process of an E-polynomial $\FF$, then we get an algorithm for the determination of the sign pattern of $\FF$. 

Actually, such an algorithm exists for all E-polynomials \cite{barbagallo_etal}. 

As an illustration of the E-degree reduction algorithm let us show that the E-polynomial function $u$ (of E-degree $2$%, with $p(x)=x$ for all real $x$
) defined by \eqref{eq:u,s,si} is $>0$ on $\R\setminus\{0\}$. Then $u_1(r):=u'(r)/e^r=-2  - 2 r - r^2+ 2 e^r$ for real $r$, so that $u_1$ is an E-polynomial function of E-degree $1$. Next, $u_2(r):=u_1'''(r)/e^r=2$ for real $r$, so that $u_2$ is an E-polynomial function of E-degree $0$, and $u_1'''>0$. 
Also, $u_1(0)=u'_1(0)=u''_1(0)=0$. So, $u_1$ is $-+$ on $\R$, and hence $u'$ is $-+$ on $\R$, so that $u$ is $\d\u$ on $\R$. It remains to note that $u(0)=0$. 

The method described in this remark may be called repeated hammering. Each round of this process consists of several ``hammer strikes'' by differentiation followed by a ``reset'' consisting in the division by $\EE(x)=e^x$; the number of strikes in each round is 
$$1+(\text{the degree of the polynomial currently playing the role of the polynomial $\p_0$}).$$ 
As noted here, the result of such a repeated hammering is a usual polynomial over $\Z$.    
\qed
\end{remark}

\begin{proof}[Proof of Lemma~\ref{lem:B}]
For real $r\ne0$, 
\begin{equation}\label{eq:DB}
	(D\B)(r):=\B'(r)\frac{2u(r)^{3/2}}{e^r}=r^2+4 r+2-e^r (r^3-r^2+4 r+4)+2 e^{2 r}. 
\end{equation}
The function $D\B$ is an E-polynomial, of E-degree $2$. So, in view of Remark~\ref{rem:e-poly}, the sign pattern of $D\B$ can be determined automatically. 
For this purpose, we can use the Mathematica command \texttt{Reduce}. 
This is from the description of this command at \url{https://reference.wolfram.com/language/ref/Reduce.html}: 
``Reduce[\emph{expr},\emph{vars}]
reduces the statement \emph{expr} by solving equations or inequalities for \emph{vars} and eliminating quantifiers.'' and 
``The result of Reduce[\emph{expr},\emph{vars}] always describes exactly the same mathematical set as \emph{expr}.''

Command \texttt{Reduce[DB[r] <= 0]} produces \texttt{r == 0}. So, $(D\B)(r)>0$ and hence $\B'(r)>0$ for real $r\ne0$. Also, the function $\B$ is continuous on $\oR$ and the function $\B'$ is continuous on $\R$, with $\B'(0)=\frac1{2\sqrt3}$. This completes the proof of Lemma~\ref{lem:B}. 

Another proof of the fact that $\B'(r)>0$ for real $r\ne0$ will be given in Appendix~\ref{app:B}, using repeated hammering described in Remark~\ref{rem:e-poly}. 
\end{proof}

\begin{proof}[Proof of Proposition~\ref{prop:F1}] % \\checking\\Prop 1.11 prop(colon)F1.nb  
This proof uses so-called l'Hospital-type rules for monotonicity, including their special-case, general, and derived special-case varieties \cite{pin06}. 

In view of \eqref{eq:p_{1,b}=}, \eqref{eq:r_b}, and the definition of $\B(r)$ in \eqref{eq:A,B,c,h(r)}, for all real $r\ge r_b$ 
\begin{equation}\label{eq:p_1=}
	p_1(r)=p_{1,b}(r)=\frac{F(r)}{G(r)},
\end{equation}
%
%\begin{equation}
%	p_1(r)=p_{1,b}(r)=
%	\begin{cases}
%	\tp(r) &\text{ if }0<b\le1, \\ 
%	\tp(r)_+=\tp(r)\,1(r\ge r_b) &\text{ if }b>1, 
%	\end{cases}
%\end{equation}
with the latter ratio continuously extended to $r=0$, where  
%\begin{equation}\label{eq:p_1=}
%	\tp(r):=\tp_b(r):=\frac{F(r)}{G(r)},
%\end{equation} 
\begin{equation}\label{eq:F,g,G}
	F(r):=F_b(r):=e^{g(r)}-1,\quad g(r):=g_b(r):=\frac{s(r)-b\sqrt{u(r)}}{e^r-1},\quad G(r):=e^r-1;  
\end{equation}
here and in the rest of this proof, by default $r$ denotes an arbitrary nonzero real number. % that is $\ge r_b$. 
Consider the ``derivative ratio'' 
\begin{equation*}
	\rho(r):=\rho_b(r):=\frac{F'(r)}{G'(r)},
\end{equation*}
and then 
\begin{equation}\label{eq:Drho}
	(D\rho)(r):=(D\rho)_b(r):=\rho'(r)4e^{-g(r)}(e^r-1)^4/u(r)
\end{equation}
and 
\begin{equation*}
	(DD\rho)(r):=(DD\rho)_b(r):=(D\rho)'(r)\frac2b\,u(r)^{7/2},
\end{equation*}
so that $(D\rho)(r)$ equals $\rho'(r)$ in sign and $(DD\rho)(r)$ equals $(D\rho)'(r)$ in sign. 

Note that  
\begin{equation}\label{eq:DDrho=}
	(DD\rho)(r)=C_1(r)+2b e^r \sqrt{u(r)}\,r\,C_2(r),
\end{equation}
where $C_1$ and $C_2$ are E-polynomials, of respective E-degrees $7$ and $4$. 

Using the \texttt{Reduce} command again, we see that 
\begin{equation}\label{eq:C1,C2<0}
	\text{$C_1(r)<0$ and $C_2(r)<0$ (again, for nonzero real $r$). } 
\end{equation}
So, $(DD\rho)(r)<0$ for real $r>0$. 

A repeated-hammering proof of \eqref{eq:C1,C2<0} will be given in Appendix~\ref{app:C1,C2<0}. 

If now $r<0$ and $b\le\B(r)$, then 
\begin{equation*}
	(DD\rho)(r)\le C_1(r)+2\B(r) e^r \sqrt{u(r)}\,r\,C_2(r)=(DD\rho)_{\B(r)}(r). 
\end{equation*}
The ratio $(DD\rho\B)(r):=\dfrac{(DD\rho)_{\B(r)}(r)}{1-e^r}$ is an E-polynomial, of E-degree $6$, and 
\begin{equation}\label{eq:DDrhoB(r)<0}
	(DD\rho\B)(r)<0\text{ if }r<0.
\end{equation}
A repeated-hammering proof of \eqref{eq:DDrhoB(r)<0} will be given in Appendix~\ref{app:DDrhoB}. 

Thus, $(DD\rho)(r)<0$ and hence $(D\rho)'(r)<0$ for all real $r\ne0$ such that $b\le\B(r)$, that is, for all real $r\ne0$ such that $r\ge r_b$. 
%Also, $(D\rho)(r)\to4(b^2-1)$ as $r\to0$.
 
It follows that $D\rho$ is $\d$ on $(-\infty,0)\cap[r_b,\infty)$ and on $(0,\infty)\cap[r_b,\infty)$. Also, 
\begin{equation}\label{eq:Drho special}
	(D\rho)(-\infty+)=\infty.\quad (D\rho)(0\pm)=4(b^2-1),\quad (D\rho)(\infty-)=-\infty
\end{equation} 
and 
\begin{equation*}
\rho'(0\pm)=\frac{b^2-1}{12},\quad	p_1(0)=p_1(0\pm)=\rho(0\pm)_+=\frac{(\sqrt3-b)_+}{2\sqrt3}. 
\end{equation*}
So, $D\rho$ is $\d$ on $[r_b,\infty]$, if $D\rho$ is extended to $\oR$ by continuity -- which will be henceforth assumed.

In particular, if $b\le1$ (so that $0<b\le1$), then $r_b=-\infty$ and hence $D\rho$ is $\d$ on $\oR$. %, if $D\rho$ is extended to $\oR$ by continuity. % $(-\infty,0)$ and on $(0,\infty)$. 
So, $D\rho$ is $+-$ on $\oR$ and hence, in view of \eqref{eq:Drho}, $\rho$ is $\u\d$ on $\R$.  

Consider now the case $b>1$. 
Note that 
\begin{equation}\label{eq:DrhoB:=}
(D\rho\B)(r):=
(D\rho)_{\B(r)}(r)\frac{u(r)^3}{(e^r-1)^2}
\end{equation}
is an E-polynomial, of E-degree $4$,   
and for real $r\ne0$
\begin{equation}\label{eq:DrhoB}
	(D\rho\B)(r)>0\iff r<r_*,
\end{equation}
%for real $r\ne0$ this E-polynomial is $>0$ iff $r<r_*$, 
where $r_*$ is a certain real number; numerically, 
\begin{equation*}
	r_*\ap3.29. 
\end{equation*}
%So, $(DD\rho)(r)<0$ for real $r>0$. 

A repeated-hammering proof of \eqref{eq:DrhoB} will be given in Appendix~\ref{app:DrhoB}, where it will also be shown that $r_*$ can be effectively computed to any precision. 

Let now 
\begin{equation}\label{eq:b_*}
	b_*:=\B(r_*)\ap3.22. 
\end{equation}
It also follows that $b_*$ can be effectively computed to any precision.

For real $r\ne0$ we have $(D\rho)_{\B(r)}(r)>0$ iff $r<r_*$; that is, in view of Lemma~\ref{lem:B},  
for all $b\in(1,\infty)\setminus\{\sqrt3\}$ we have 
$(D\rho)_b(r_b)>0$ iff $b<b_*$. 
If now $b=\sqrt3$, then $r_b=0$ and, in view of \eqref{eq:Drho special} and $D\rho$ having been extended to $\oR$ by continuity,   $(D\rho)_b(r_b)=(D\rho)_b(r_b\pm)=4(b^2-1)=8>0$. So, for all $b\in(1,\infty)$ we have 
$(D\rho)_b(r_b)>0$ iff $b<b_*$. 
Also by \eqref{eq:Drho special}, $(D\rho)(\infty-)=-\infty$. 
 
Therefore, if $b<b_*$ (and still assuming that $b>1$), then $(D\rho)_b$ is $+-$ and hence $\rho_b$ is $\u\d$ on $[r_b,\infty)$; and if $b\ge b_*$, then $(D\rho)_b<(D\rho)_b(r_b)\le0$ on $(r_b,\infty)$ and hence $\rho_b$ is $\d$ on $[r_b,\infty)$. 

%Note also that 
%\begin{equation}
%	p_1(-\infty)=p_1(-\infty+)=e^{-1-b}
%\end{equation}

To complete the proof of Proposition~\ref{prop:F1}, we will consider the following five cases separately. 

\emph{Case~1: $b<1$.} Then, as was noted, $\rho$ is $\u\d$ on $\R$ and $\rho'(0\pm)=\frac{b^2-1}{12}<0$. %Also, $\rho'(r)\sim
So, $\rho$ is $\u\d$ on $(-\infty,0)$ and $\d$ on $(0,\infty)$. So, by \cite[line~3 of Table~4.1]{pin06}, $p_1$ is $\d$ or $\u\d$ on $(-\infty,0)$. But $e^{-r} p'_1(r)\to\infty>0$ as $r\to-\infty$. So, $p_1$ is $\u\d$ on $(-\infty,0)$. By \cite[Proposition 4.1]{pin06}, $p_1$ is $\d$ on $(0,\infty)$. Therefore and by continuity, in Case~1 $p_1$ is $\u\d$ on $\oR=[r_b,\infty]$. 

\emph{Case~2: $b=1$.} \label{case2} Then again, $\rho$ is $\u\d$ on $\R$. Also, then $\rho'(0\pm)=0$ and $\rho''(0\pm)=-\frac{11}{120\sqrt3}<0$. %Also, $\rho'(r)\sim
So, $\rho$ is $\u$ on $(-\infty,0)$ and $\d$ on $(0,\infty)$. So, by \cite[Proposition 4.1]{pin06}, $p_1$ is $\u$ on $(-\infty,0)$ and $\d$ on $(0,\infty)$. Therefore and by continuity, in Case~2 $p_1$ is $\u\d$ on $\oR=[r_b,\infty]$. 

\emph{Case~3: $1<b\le\sqrt3$.} Then $\rho$ is $\u\d$ on $[r_b,\infty)$. Also, here $r_b\le0$ and $\rho'(0\pm)>0$. So, $\rho$ is $\u$ on $[r_b,0]$ and hence, by \cite[Proposition 4.1]{pin06}, $p_1$ is $\u$ on $[r_b,0]$. 
On $[0,\infty)$, $\rho$ is $\u\d$, so that, by \cite[line~1 of Table~4.1]{pin06}, $p_1$ is $\u$ or $\u\d$; also, $e^r p'_1(r)\to-\infty<0$ as $r\to\infty$, which excludes the pattern $\u$. So, in Case~3 as well, $p_1$ is $\u\d$ on $[r_b,\infty]$.

\emph{Case~4: $\sqrt3<b<b_*$.} Then $\rho$ is $\u\d$ on $[r_b,\infty)$. Also, here $r_b>0$. % and $\rho$ is $\u\d$ on $[r_b,\infty)$. 
So, by \cite[line~1 of Table~1.2]{pin06}, $p_1$ is $\u$ or $\d$ or $\u\d$ or $\d\u$ or $\d\u\d$ on $[r_b,\infty)$. But $p_1(r_b)=0<p_1(r)$ for $r>r_b$, which excludes the patterns $\d$, $\d\u$, and $\d\u\d$. Also, $e^r p'_1(r)\to-\infty<0$ as $r\to\infty$. So, in Case~4 as well, $p_1$ is $\u\d$ on $[r_b,\infty]$.

\emph{Case~5: $b\ge b_*$.} Then $r_b>0$ and $\rho$ is $\d$ on $[r_b,\infty)$. So, by \cite[line~2 of Table~1.1]{pin06}, $p_1$ is $\u$ or $\d$ or $\u\d$ on $[r_b,\infty)$. But $p_1(r_b)=0<p_1(r)$ for $r>r_b$, which excludes the pattern $\d$. Also, $e^r p'_1(r)\to-\infty<0$ as $r\to\infty$. So, in Case~5 as well, $p_1$ is $\u\d$ on $[r_b,\infty]$. 

This proves the first sentence in the statement of Proposition~\ref{prop:F1}. 

If $b\le1$, then $r_b=-\infty$ and hence the second sentence in the statement of Proposition~\ref{prop:F1} is vacuous. If now $b>1$, then, by Remark~\ref{rem:iB}, the condition $-\infty\le r<r_b$ is equivalent to $b>\B(r)$, so that $-\infty\le r<r_b$ implies $p_1(r)=0$, in view of \eqref{eq:p_{1,b}} and \eqref{eq:f_{A,B,c,h}}. This proves the second sentence in the statement of Proposition~\ref{prop:F1}. 

The third sentence in the statement of Proposition~\ref{prop:F1} follows from the first two sentences, in view of \eqref{eq:p_{1,b}=}.

This completes the proof of Proposition~\ref{prop:F1}. 
\end{proof}

%\begin{proposition}\label{prop:F1}
%The function $p_1$ %, defined by \eqref{eq:p_{1,b}}, 
%is $\u\d$ on $[r_b,\infty]$. 
%Also, $p_1(r)=0$ if $-\infty\le r<r_b$. 
%So, $\sup_{r\in\oR}p_1(r)$ is attained at a unique value of $r$, which is actually in $\R$, and  
%this unique maximizer of $p_1(r)$ in $r$ can be effectively computed to any precision. 
%\end{proposition}

\begin{proof}[Proof of Lemma~\ref{lem:b_{**}}]
Note that, for $b\in[1/\sqrt2,1)$,   
\begin{equation*}
	\rat(b)=\frac{1-\sqrt{2 b^2-1}}{2}\, e^{b+\sqrt{2 b^2-1}}
\end{equation*}
and 
\begin{gather*}
\rat'(b)=\frac{1-2b-\sqrt{2 b^2-1}}{2} \, e^{b+\sqrt{2 b^2-1}}<0, 
\end{gather*}
so that the continuous function $\rat$ is decreasing on $[1/\sqrt2,1)$, from $e^{1/\sqrt2}/2=1.0140\ldots$ to $0$. 
\end{proof}

According to Theorem~\ref{th:par2}, the densities $f\in\F_{2,\infty}$ can be parametrized by the one parameter 
\begin{equation*}
	s:=\frac1h\in(0,1), 
\end{equation*}
so that for $c,u,h,k$ as in \eqref{eq:par2} we have 
\begin{equation}\label{eq:tc etc}
	c=\tc(s):=\frac1{s+t},\quad u=s-t,\quad h=\frac1s,\quad k=\frac1t,
\end{equation}
where 
\begin{equation}\label{eq:t}
	t=%\thh(s):=
	\sqrt{1-s^2};
\end{equation}
the latter convention on $t$ will be assumed by default. 

To prove Theorem~\ref{th:df bounds}, we will also need the following proposition. 

\begin{proposition}\label{prop:F2}
For real $b>0$ and $s\in(0,1)$, let 
\begin{equation}\label{eq:p_{2,b}:=}
	%p_2(s):=
	p_{2,b}(s):=\int_b^\infty \tf_{\tc(s),s-t,1/s,1/t}(x)\,dx, 
\end{equation}
with $\tf_{c,u,h,k}$ defined in \eqref{eq:f_{c,u,h,k}}. 
Then 
\begin{equation}\label{eq:p_{2,b}=}
	p_{2,b}(s)=\begin{cases}
	p_{21,b}(s):=\frac{t }{s+t}\,e^{-\frac{b+t-s}{t}} &\text{ if } s-t\le b, \\ 
	p_{22,b}(s):=1-\frac{s }{s+t}\,
 e^{\frac{b+t-s}{s}}&\text{ if }s-t\ge b  
	\end{cases}
\end{equation}
and, with $p_{1,b}(r)$ as defined in Proposition~\ref{prop:F1}, 
\begin{equation}\label{eq:S_2 le S_1}
	S_2(b):=\sup_{s\in(0,1)}p_{2,b}(s)\le S_1(b):=\max_{r\in\oR}p_{1,b}(r)
\end{equation}
and 
\begin{equation}\label{eq:I_2 le I_1}
	I_2(b):=\inf_{s\in(0,1)}p_{2,b}(s)\le I_1(b):=\min_{r\in\oR}p_{1,b}(r). 
\end{equation}
Moreover, 
\begin{equation}\label{eq:I_1 ge I_2}
\begin{aligned}
I_1(b)=\min(p_{1,b}(-\infty),p_{1,b}(\infty))&=\min(1-e^{-(1-b)_+},e^{-1-b}) \\ 
&=\min(p_{2,b}(1-),p_{2,b}(0+))\ge I_2(b). 
\end{aligned}	
\end{equation}
%where $z_+:=\max(0,z)$ for real $z$. 
Finally, for all real $b>0$, 
\begin{equation}\label{eq:I_2}
	I_2(b)=J_2(b),
\end{equation}
with $J_2(b)$ as defined in \eqref{eq:J_2}.  
\end{proposition}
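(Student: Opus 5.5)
The first step is a direct computation of $p_{2,b}$ from \eqref{eq:f_{c,u,h,k}} and \eqref{eq:tc etc}. If $u=s-t\le b$, only the right piece contributes:
\[
p_{2,b}(s)=\frac ck\,e^{-k(b-u)}=\frac{t}{s+t}\,e^{-(b+t-s)/t}.
\]
If $u\ge b$, write the tail as one minus the left tail:
\[
p_{2,b}(s)=1-\frac ch\,e^{h(b-u)}=1-\frac{s}{s+t}\,e^{(b+t-s)/s}.
\]
This gives \eqref{eq:p_{2,b}=}. Since $u=s-t$ increases from $-1$ to $1$ as $s$ runs over $(0,1)$, the second branch occurs only when $b<1$, namely for $s\ge s_b$ with $s_b-\sqrt{1-s_b^2}=b$. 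The endpoint limits are $p_{2,b}(0+)=e^{-1-b}=p_{1,b}(\infty)$ and $p_{2,b}(1-)=1-e^{-(1-b)_+}=p_{1,b}(-\infty)$, by \eqref{eq:p_{1,b} special}. For $b\ge1$ this last limit is $0$, because then $(b+t-s)/t\to\infty$.

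Next comes $I_1$. By Proposition~\ref{prop:F1}, $p_{1,b}$ vanishes on $[-\infty,r_b)$ and is $\u\d$ on $[r_b,\infty]$. Hence its minimum over $\oR$ is attained at an endpoint, which gives the first equality in \eqref{eq:I_1 ge I_2}. When $b>1$ we have $p_{1,b}(-\infty)=0$, which is consistent with this. The identification with $\min(p_{2,b}(1-),p_{2,b}(0+))$ is the limit computation above. The inequality $\ge I_2(b)$ is trivial, because an infimum is at most any boundary limit; this also proves \eqref{eq:I_2 le I_1}.

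For \eqref{eq:S_2 le S_1} I would show that $p_{2,b}$ has no interior local maximum on $(0,1)$, i.e.\ it is $\d$, $\u$ or $\d\u$. Then $S_2(b)=\max(p_{2,b}(0+),p_{2,b}(1-))=\max(p_{1,b}(\infty),p_{1,b}(-\infty))\le S_1(b)$. To do this, take logarithmic derivatives of $p_{21}$ and $p_{22}$ in $s$, using $dt/ds=-s/t$, and clear positive factors. This should reduce each sign question to an explicit algebraic function of $s$ and $t$ that changes sign at most once, from $-$ to $+$. At the junction $s=s_b$ the density is continuous in $s$, so no kink can create an interior maximum; I would check that the one-sided derivatives there are compatible with this.

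The same derivative computation yields $I_2=J_2$. Setting the derivative of $p_{22}$ to zero should give $s=(1+\sqrt{2b^2-1})/(2b)$, provided $b\ge1/\sqrt2$ and $s\ge s_b$. Substituting this $s$ should produce the value $\frac{1-\sqrt{2b^2-1}}{2}e^{\sqrt{2b^2-1}-1}$. Otherwise the infimum is at an endpoint: it equals $e^{-1-b}$ from $s\to0+$, or $0$ for $b\ge1$ from $s\to1-$. Comparing the critical value with $e^{-1-b}$ is exactly the ratio $\rat(b)$, so Lemma~\ref{lem:b_{**}} determines which of the two wins and gives the threshold $b_{**}$ in \eqref{eq:J_2}. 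For $b<1/\sqrt2$ the critical point does not exist, and the infimum is $e^{-1-b}$, which matches $J_2$ since $b_{**}>1/\sqrt2$. The main obstacle is the sign and shape analysis of $\partial_s p_{21}$ and $\partial_s p_{22}$ on their respective ranges. If hand manipulation stalls, I would substitute $s=\sin\theta$, $t=\cos\theta$ and reduce the problem to a trigonometric-polynomial sign question, checkable by \texttt{Reduce} as in Lemma~\ref{lem:B}.
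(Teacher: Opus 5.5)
\textbf{The gap is in your treatment of \eqref{eq:S_2 le S_1}.} Your claim that $p_{2,b}$ has no interior local maximum (shape $\d$, $\u$ or $\d\u$) is false for $\frac1{\sqrt2}<b<1$. The derivative computation you propose, with $dt/ds=-s/t$, gives
\[
\partial_s\ln p_{21,b}(s)=\frac{s\,\bigl(1-b(s+t)\bigr)}{t^3(s+t)},\qquad
\partial_s\ln\bigl(1-p_{22,b}(s)\bigr)=-\frac{1+b(s+t)}{s^2(s+t)} .
\]
So $p_{22,b}$ is strictly increasing wherever it applies. The sign of $\partial_s p_{21,b}$ is that of $1-b(s+t)$. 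As $s$ runs over $(0,1)$, $s+t=s+\sqrt{1-s^2}$ rises from $1$ to $\sqrt2$ and falls back to $1$.

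Hence, for $\frac1{\sqrt2}<b<1$, this sign is $+-+$ on $(0,s_b]$. Put $\ga:=\sqrt{2b^2-1}$. The sign changes occur at $s_2(b):=\frac{1-\ga}{2b}$ and $s_3(b):=\frac{1+\ga}{2b}$, with $s_3(b)<s_b$. Thus $p_{2,b}$ is $\u\d\u$ on $(0,1)$, with an interior local maximum
\[
p_{21,b}(s_2(b))=\frac{1+\ga}{2}\,e^{-1-\ga}.
\]
This value is strictly above $p_{2,b}(0+)=e^{-1-b}$. For $b$ between about $0.824$ and $1$ it is also above $p_{2,b}(1-)=1-e^{b-1}$.

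Your own $I_2$ analysis already forces this. For $\frac1{\sqrt2}<b<b_{**}$, the interior local minimum value $\frac{1-\ga}2e^{\ga-1}$ exceeds the left limit $e^{-1-b}$; that is exactly what $\rat(b)>1$ says. That is impossible without an interior local maximum to its left.

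Consequently the identity $S_2(b)=\max\bigl(p_{1,b}(\infty),p_{1,b}(-\infty)\bigr)$ fails. The missing step is a genuine comparison of $p_{21,b}(s_2(b))$ with $S_1(b)$. For example, for $b\le1$,
\[
\frac{1+\ga}2e^{-1-\ga}\le\frac1{2e}<\frac12-\frac1{2\sqrt3}\le p_{1,b}(0)\le S_1(b).
\]
This is essentially what the paper does, via $S_1(b)\ge S_1(1)\ge p_{1,1}(0)$.

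\textbf{The $I_2$ step misplaces the critical point.} $\partial_s p_{22,b}$ never vanishes. The point $s_3(b)$ is instead the interior local minimum of $p_{21,b}$, lying in the region $s\le s_b$. The value you substitute is nevertheless right: $s_3+t_3=1/b$ gives $p_{21,b}(s_3(b))=\frac{1-\ga}2e^{\ga-1}$. With the correct $\u\d\u$ shape one gets $I_2(b)=\min\bigl(e^{-1-b},p_{21,b}(s_3(b))\bigr)$ on $(\frac1{\sqrt2},1)$, and Lemma~\ref{lem:b_{**}} then finishes as you say.

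The formula for $p_{2,b}$, the endpoint limits, and the $I_1$ part agree with the paper. One small correction: at $b=1$ the exponent $(b+t-s)/t$ tends to $1$, not $\infty$. The limit $0$ there comes from the factor $t/(s+t)$.
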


%??? For an illustration of Proposition~\ref{prop:F2}, here is the graph $\{(b,I_2(b))\colon0<b<1\}$, with the red and green parts coming, in view of \eqref{eq:I_2}, from the respective first two cases in \eqref{eq:J_2}: % A=B=infty\A=B=infty(2)-scr1.nb
%\begin{figure}[htbp]
%	\centering
%		\includegraphics[width=.6\textwidth]{pic.pdf}
%	\caption{Graph of $I_2$ over the interval $(0,1)$.}
%	\label{fig:pic}
%\end{figure}

\begin{proof}[Proof of Proposition~\ref{prop:F2}]
First of all, \eqref{eq:p_{2,b}=} follows immediately from the definitions of $p_{2,b}(s)$ in \eqref{eq:p_{2,b}:=}, of $\tf_{c,u,h,k}(x)$ in \eqref{eq:f_{c,u,h,k}}, of $\tc(s)$ in \eqref{eq:tc etc}, and of $t$ in \eqref{eq:t}. % pr2 in A=B=infty\A=B=infty(2).nb 

The attainment of the minimum in the definition of $I_1(b)$ in \eqref{eq:I_2 le I_1} follows from the first equality in \eqref{eq:I_1 ge I_2} -- which in turn follows from Proposition~\ref{prop:F1}. 
%, because, in view of the definition of $r_b$ in \eqref{eq:r_b}, we have $p_{1,b}(r)=0$ if $b\ge1$ and $r\le r_b$. 

The second equality in \eqref{eq:I_1 ge I_2} follows immediately from %\eqref{eq:p_{1,b}=} and 
\eqref{eq:p_{1,b} special}.
%the definitions of $p_{1,b}(r)$ in \eqref{eq:p_{1,b}}, of $f_{A,B,c,h}(x)$ in \eqref{eq:f_{A,B,c,h}}, and the relevant definitions in \eqref{r=0 or pm infty}. 
%In particular, for all real $b>0$, 
%\begin{equation}\label{eq:pm infty}
%	(p_{1,b}(-\infty),p_{1,b}(\infty))=(1-e^{-(1-b)_+},e^{-1-b}). 
%\end{equation}

The third equality in \eqref{eq:I_1 ge I_2} follows from \eqref{eq:p_{2,b}=} and 
the definition of $t$ in \eqref{eq:t}. 

The inequality in \eqref{eq:I_1 ge I_2} follows immediately from 
the definition of $I_2(b)$ in \eqref{eq:I_2 le I_1}. 

It remains to verify 
\begin{enumerate}[(i)]
\item the inequality in \eqref{eq:S_2 le S_1}; 
%	\item the last two lines in the statement of Proposition~\ref{prop:F2}, which would provide for the correctness of the definition of $b_{**}$ as the unique root of equation \eqref{eq:rat} in the interval $[1/\sqrt2,1)$; 
	\item the equality in \eqref{eq:I_2}. 
\end{enumerate}

Let us begin with task (ii). For any $s\in(0,1)$ and $t$ as in \eqref{eq:t}, 
\begin{equation*}
	s-t\le b\iff
	\left\{
	\begin{alignedat}{2}
	& s\in(0,s_1(b)] &&\text{ if }0<b<1, \\
	& s\in(0,1) &&\text{ if }b\ge1,
	\end{alignedat}
	\right.
\end{equation*}
where 
\begin{equation*}
	s_1(b):=\frac{b+\sqrt{2-b^2}}2.
\end{equation*}

Also, 
\begin{equation*}
	p_{21,b}'(s)=(1-b(s+\sqrt{1 - s^2}))
	\frac{s}{t^2(s+t)^2}\,e^{-\frac{b+t-s}{t}},
\end{equation*}
which equals $1-b(s+\sqrt{1 - s^2})$ in sign. It follows that, for any $s\in(0,1)$ such that $s-t\le b$, 
\begin{equation}\label{p_{21,b}'(s)}
	p_{21,b}'(s)\ge0\iff
	\left\{
	\begin{alignedat}{2}
	& s\in(0,s_1(b)] &&\text{ if }0<b\le\frac1{\sqrt2}, \\
	& s\in(0,s_2(b)]\cup[s_3(b),s_1(b)] &&\text{ if }\frac1{\sqrt2}<b<1, \\ 
	& s\in\emptyset &&\text{ if }b\ge1,
	\end{alignedat}
	\right.
\end{equation}
where 
\begin{equation*}
	s_2(b):=\frac{1-\sqrt{2 b^2-1}}{2 b}\quad\text{and}\quad 
	s_3(b):=\frac{1+\sqrt{2 b^2-1}}{2 b}. 
\end{equation*}
So, 
\begin{equation}\label{eq:max p21}
	\sup_{0<s<1,\,s-t\le b}\, p_{21,b}(s)=
	\left\{
	\begin{alignedat}{2}
	& p_{21,b}(s_1(b)) &&\text{ if }0<b\le\frac1{\sqrt2}, \\
	& p_{21,b}(s_2(b))\vee p_{21,b}(s_1(b)) &&\text{ if }\frac1{\sqrt2}<b<1, \\ 
	& p_{21,b}(0+)=e^{-1-b} &&\text{ if }b\ge1,
	\end{alignedat}
	\right.
\end{equation}
\begin{equation}\label{eq:min p21}
	\inf_{0<s<1,\,s-t\le b}\, p_{21,b}(s)=
	\left\{
	\begin{alignedat}{2}
	& p_{21,b}(0+)=e^{-1-b} &&\text{ if }0<b\le\frac1{\sqrt2}, \\
	& p_{21,b}(0+)\wedge p_{21,b}(s_3(b))
 &&\text{ if }\frac1{\sqrt2}<b<1, \\ 
	& p_{21,b}(1-)=0 &&\text{ if }b\ge1; 
	\end{alignedat}
	\right.
\end{equation}
here, as usual, $\al\vee\be:=\max(\al,\be)$ and $\al\wedge\be:=\min(\al,\be)$. So, we have determined the extrema of $p_{21,b}$. 
%Actually, \eqref{eq:min p21} is given here only for completeness (to match \eqref{eq:max p21}) and will not be needed in our proofs.  

Let us now determine the extrema of $p_{22,b}$. For any $s\in(0,1)$ and $t$ as in \eqref{eq:t}, 
\begin{equation*}
	s-t\ge b\iff
	\left\{
	\begin{alignedat}{2}
	& s\in[s_1(b),1) &&\text{ if }0<b<1, \\
	& s\in\emptyset &&\text{ if }b\ge1. 
	\end{alignedat}
	\right.
\end{equation*}

Also, for $b\in(0,1)$, 
\begin{equation*}
	p_{22,b}'(s)=(1+b(s+\sqrt{1 - s^2}))
	\frac1{s(s+t)^2}\,e^{\frac{b+t-s}s}>0
\end{equation*}
for all $s\in[s_1(b),1)$. 
So, for $s\in(0,1)$ such that $s-t\ge b$, we have $b<1$, 
\begin{equation}\label{eq:max p22}
	\sup_{0<s<1,\,s-t\ge b}\, p_{22,b}(s)=p_{22,b}(1-)=1-e^{b-1}, 
%	\left\{
%	\begin{alignedat}{2}
%	& p_{22,b}(1-) &&\text{ if }0<b\le\frac1{\sqrt2}, \\
%	& p_{22,b}(1-) &&\text{ if }\frac1{\sqrt2}<b<1, 
%	\end{alignedat}
%	\right.
\end{equation}
\begin{equation}\label{eq:min p22}
	\inf_{0<s<1,\,s-t\ge b}\, p_{22,b}(s)=p_{22,b}(s_1(b))=p_{21,b}(s_1(b)); 
%	\left\{
%	\begin{alignedat}{2}
%	& p_{22,b}(s_1(b)) &&\text{ if }0<b\le\frac1{\sqrt2}, \\
%	& p_{22,b}(s_1(b)) &&\text{ if }\frac1{\sqrt2}<b<1;
%	\end{alignedat}
%	\right.
\end{equation}
the latter equality holds, in view of the definitions of $p_{21,b}(s)$ and $p_{22,b}(s)$ in \eqref{eq:p_{2,b}=}, because for $s\in(0,1)$ we have $s=s_1(b)\iff s-t=b$.  
So, we have determined the extrema of $p_{22,b}$ as well. 

Now, in view of \eqref{eq:min p21}, \eqref{eq:min p22}, and 
the definition of $I_2(b)$ in \eqref{eq:I_2 le I_1}, 
\begin{equation*}%\label{eq:I_2(b)=}
	I_2(b)=
	\left\{
	\begin{alignedat}{2}
	& p_{21,b}(0+)\wedge p_{21,b}(s_1(b)) &&\text{ if }0<b\le\frac1{\sqrt2}, \\
	& p_{21,b}(0+)\wedge p_{21,b}(s_3(b))\wedge p_{21,b}(s_1(b))
 &&\text{ if }\frac1{\sqrt2}<b<1, \\ 
	& 0 &&\text{ if }b\ge1.  
	\end{alignedat}
	\right.
\end{equation*}
Looking back at \eqref{p_{21,b}'(s)}, we see that $p_{21,b}(0+)\le p_{21,b}(s_1(b))$ if $0<b\le\frac1{\sqrt2}$, and \break 
$p_{21,b}(s_3(b))\le p_{21,b}(s_1(b))$ if $\frac1{\sqrt2}<b<1$. Therefore and because $p_{21,b}(0+)=e^{-1-b}$ and $p_{21,b}(s_3(b))=\frac{1}{2} \left(1-\sqrt{2 b^2-1}\right) e^{\sqrt{2 b^2-1}-1}$, the above expression for $I_2(b)$ simplifies as follows: 
\begin{equation*}%\label{eq:I_2(b)=}
	I_2(b)=
	\left\{
	\begin{alignedat}{2}
	& e^{-1-b} &&\text{\quad if }0<b\le\frac1{\sqrt2}, \\
	& e^{-1-b}\wedge \Big(\frac{1-\sqrt{2 b^2-1}}{2} e^{\sqrt{2 b^2-1}-1} \Big)
 &&\text{\quad if }\frac1{\sqrt2}<b<1, \\ 
	& 0 &&\text{\quad if }b\ge1.  
	\end{alignedat}
	\right.
\end{equation*}
Thus, to complete task (ii), of proving the equality in \eqref{eq:I_2}, it remains to recall Lemma~\ref{lem:b_{**}}. 
%the completed verification, in task (ii), of the last two lines in the statement of Proposition~\ref{prop:F2}. 

Finally, let us carry out task (i), of proving the inequality in \eqref{eq:S_2 le S_1}. Using the definition of $S_2(b)$ in \eqref{eq:S_2 le S_1}, the expression of $p_{2,b}(s)$ in \eqref{eq:p_{2,b}=}, and formulas \eqref{eq:max p21} and \eqref{eq:max p22}, 
we have 
\begin{equation}\label{eq:S_2(b)=}
	S_2(b)=
	\left\{
	\begin{alignedat}{2}
	& p_{21,b}(s_1(b))\vee(1-e^{b-1}) &&\text{ if }0<b\le\frac1{\sqrt2}, \\
	& p_{21,b}(s_2(b))\vee p_{21,b}(s_1(b)) \vee(1-e^{b-1}) 
	&&\text{ if }\frac1{\sqrt2}<b<1, \\ 
	& e^{-1-b} &&\text{ if }b\ge1. 
	\end{alignedat}
	\right.
\end{equation}
Note that 
\begin{equation}\label{eq:p_{21,b}(s_1(b))}
	p_{21,b}(s_1(b))=\frac{1}{2}-\frac{b}{2 \sqrt{2-b^2}}, 
\end{equation}
which clearly decreases in $b\in(0,1)$. 
Next, for $b\in(\frac1{\sqrt2},1)$, 
\begin{equation}\label{p_{21,b}(s_2(b))}
	p_{21,b}(s_2(b))=\frac12\, e^{-1 - \sqrt{2b^2-1}}\, (1 + \sqrt{2b^2-1}), 
\end{equation}
which decreases in $b\in(\frac1{\sqrt2},1)$, being the composition of the decreasing function $[0,\infty)\ni\ga\mapsto\frac12\, e^{-1 - \ga} (1 + \ga)$ and the increasing function $(\frac1{\sqrt2},1)\ni b\mapsto\sqrt{2b^2-1}$. 

Also, the function $S_1$ is nonincreasing, being the pointwise supremum of nonincreasing functions. 

Using the mentioned nonincreasing patterns of $p_{21,b}(s_1(b))$, $p_{21,b}(s_2(b))$, and $S_1(b)$, we have the following. 

For $b\in(0,3/10]$, 
\begin{equation*}
	S_1(b)\ge S_1(3/10)\ge p_{1,3/10}(-64/10)>0.504>1/2
	%=p_{21,0+}(s_1(0+))\ge 
	>p_{21,b}(s_1(b)) 
\end{equation*}
and, by \eqref{eq:p_{1,b} special}, $S_1(b)\ge1-e^{b-1}$. So, by \eqref{eq:S_2(b)=}, the inequality in \eqref{eq:S_2 le S_1} holds for $b\in(0,3/10]$. 

For $b\in(3/10,51/100]$, 
\begin{equation*}
	S_1(b)\ge S_1(51/100)\ge p_{1,51/100}(-33/10)>0.40>p_{21,3/10}(s_1(3/10))\ge p_{21,b}(s_1(b)) 
\end{equation*}
and, again by \eqref{eq:p_{1,b} special}, $S_1(b)\ge1-e^{b-1}$. So, by \eqref{eq:S_2(b)=}, the inequality in \eqref{eq:S_2 le S_1} holds for $b\in(3/10,51/100]$.   

For $b\in(51/100,1/\sqrt2]$, 
\begin{equation*}
	S_1(b)\ge S_1(1/\sqrt2)\ge p_{1,1/\sqrt2}(-16/10)>0.31>p_{21,51/100}(s_1(51/100))\ge p_{21,b}(s_1(b)) 
\end{equation*}
and, again by \eqref{eq:p_{1,b} special}, $S_1(b)\ge1-e^{b-1}$. So, by \eqref{eq:S_2(b)=}, the inequality in \eqref{eq:S_2 le S_1} holds for $b\in(51/100,1/\sqrt2]$. 

Thus, the inequality in \eqref{eq:S_2 le S_1} holds for all $b\in(0,1/\sqrt2]$.   

Recalling \eqref{eq:p_{1,b} special}, \eqref{eq:p_{21,b}(s_1(b))}, and \eqref{p_{21,b}(s_2(b))}, we have 
\begin{equation*}
	p_{1,1}(0)=\frac{1}{2}-\frac{1}{2 \sqrt{3}}=p_{21,1/\sqrt2}(s_1(1/\sqrt2))
	>\frac{1}{2 e}=p_{21,1/\sqrt2}(s_2(1/\sqrt2)).
\end{equation*}
So, for $b\in(1/\sqrt2,1)$, 
\begin{multline*}
	S_1(b)\ge S_1(1)\ge p_{1,1}(0)=p_{21,1/\sqrt2}(s_1(1/\sqrt2)) \\ 
	=p_{21,1/\sqrt2}(s_1(1/\sqrt2))\vee p_{21,1/\sqrt2}(s_2(1/\sqrt2))\ge p_{21,b}(s_1(b))\vee p_{21,b}(s_2(b)) 
\end{multline*}
and, again by \eqref{eq:p_{1,b} special}, $S_1(b)\ge1-e^{b-1}$. So, by \eqref{eq:S_2(b)=}, the inequality in \eqref{eq:S_2 le S_1} still holds for $b\in(1/\sqrt2,1)$. 

For $b\ge1$, 
\begin{equation*}
	S_1(b)\ge p_{1,b}(\infty)=e^{-1-b}, 
\end{equation*}
once again by \eqref{eq:p_{1,b} special}. Therefore, 
by \eqref{eq:S_2(b)=}, the inequality in \eqref{eq:S_2 le S_1} holds for $b\ge1$ as well. 

We conclude that the inequality in \eqref{eq:S_2 le S_1} holds for all $b>0$. 

This completes task (i). Proposition~\ref{prop:F2} is now completely proved. 
\end{proof} 

\begin{proof}[Proof of Theorem~\ref{th:df bounds}] 
The attainment of suprema and infima in \eqref{eq:sups,infs} follows immediately from Corollary~\ref{cor:b}, which actually implies that the latter supremum and infimum in \eqref{eq:sups,infs} are attained at some functions in $\F_1\cup\F_{2,\infty}$. 

The equalities $\max_{r\in\oR}p_{1,b}(r)=\max_{r\ge r_b}p_{1,b}(r)=\max_{r> r_b}p_{1,b}(r)$ in \eqref{eq:S(b)=} follow from Proposition~\ref{prop:F1}. 

For all real $b$,  
\begin{equation*}
\begin{alignedat}{2}
	S(b)&=\sup_{f\in\F}\int_b^\infty f &&\quad \text{by \eqref{eq:sups,infs}} \\ 
	&=1-\inf_{f\in\F}\int_{-\infty}^b f &&\quad \text{because $\int_\R f=1$ } \\ 
	&=1-\inf_{f\in\F}\int_{-b}^\infty f &&\quad \text{by reflection $x\leftrightarrow-x$ } \\ 
	&=1-I(-b) &&\quad \text{by \eqref{eq:sups,infs} },  
\end{alignedat}
\end{equation*}
which proves \eqref{eq:I(b)=}. 

Consider now the case $b>0$ in \eqref{eq:S(b)=}. 
Then, in view of  
\begin{itemize}
	\item the definitions of $S_1(b)$ and $S_2(b)$ in \eqref{eq:S_2 le S_1}; 
	\item the definitions of $p_{2,b}(s)$, $\tf_{c,u,h,k}(x)$, $p_{1,b}(r)$, $f_{A,B,c,h}(x)$ 
respectively in \eqref{eq:p_{2,b}:=}, \eqref{eq:f_{c,u,h,k}}, \eqref{eq:p_{1,b}}, \eqref{eq:f_{A,B,c,h}};
\item ``parametrization'' Theorems~\ref{th:par2} and \ref{th:par1}, 
\end{itemize}
we have 
\begin{equation}\label{eq:S=S_1}
	S(b)=S_1(b)\vee S_2(b)=S_1(b)
\end{equation}
by \eqref{eq:S_2 le S_1}, and 
\begin{equation}\label{eq:I=I_2}
	I(b)=I_1(b)\wedge I_2(b)=I_2(b)
\end{equation}
by \eqref{eq:I_1 ge I_2}.   
Now \eqref{eq:S=S_1} and the definition of $S_1(b)$ in \eqref{eq:S_2 le S_1} imply the case $b>0$ of \eqref{eq:S(b)=}. 

If $b<0$, then, by \eqref{eq:I(b)=} and \eqref{eq:I=I_2},  
\begin{equation*}
	S(b)=1-I(-b)=1-I_2(-b), 
\end{equation*}
which proves the case $b<0$ of \eqref{eq:S(b)=}.

The case $b=0$ of \eqref{eq:S(b)=} can be dealt with similarly, and in a simpler way, than the case $b>0$ of \eqref{eq:S(b)=}. Alternatively, by \cite[Lemma 5.4]{lovasz-vempala}, $I(0)\ge1/e$ and hence $S(0)\le1-1/e$, which implies $S(0)=1-1/e$, in view of the p.d.f.\ $f\in\F_1\subset\F$ defined by \eqref{eq:f_-infty}. So, by \eqref{eq:I(b)=}, we get $S(0)=1-I(0)=1-1/e$, which proves the case $b=0$ of \eqref{eq:S(b)=} as well.

It remains to prove \eqref{eq:S(0),S(1)} and \eqref{eq:S(b) sim}, as the last two sentences in the statement of Theorem~\ref{th:df bounds} can be verified by simple direct calculations. 

The first equality in \eqref{eq:S(0),S(1)} follows immediately from \eqref{eq:S=S_1} and the consideration (in the proof of Proposition~\ref{prop:F1}) of Case~2 on page~\pageref{case2}. 
%
%Note that $p'_{1,1}(0\pm)=0$. So, by the case $b>0$ of \eqref{eq:S(b)=} and Proposition~\ref{prop:F1}, $S(1)=p_{1,1}(0)$. 
%
In view of \eqref{eq:f_{A,B,c,h}} and \eqref{r=0 or pm infty}, we also have the second equality in \eqref{eq:S(0),S(1)}, which completes the proof of \eqref{eq:S(0),S(1)}. 

Finally, to prove \eqref{eq:S(b) sim}, % \\exp-bound\\2.nb
note first that, by %\eqref{eq:p_{1,b}}, \eqref{eq:p_1=}, and \eqref{eq:F,g,G}, for real $b>\sqrt3$ and $r\ge r_b$ we have  
\eqref{eq:p_{1,b}=}, for real $b>\sqrt3$ and $r\ge r_b$ we have 
\begin{equation}\label{eq:p_1<}
	p_{1,b}(r)=\frac{e^{g_b(r)}-1}{e^r-1}<q_b(r):=\frac{e^{g_b(r)}}{e^r-1},
%	<\frac{e^{(s(r)-b\sqrt{u(r)})/(e^r-1)}}{e^r-1}
%	\underset{b,r\to\infty}\sim e^{-r+(s(r)-b\sqrt{u(r)})/(e^r-1)}=:q_b(r). 
\end{equation}
where $g_b(r)$ is as defined in \eqref{eq:F,g,G}. Next, for $r>0$ we have $2+e^r(r -2)+r>0$ and hence
\begin{equation*}
	q'_b(r)=\frac{e^{g_b(r)} r e^r}{2(e^r-1)^3\sqrt{u(r)}}\,
	(-b (2 + e^r (r-2) + r) - 2 \sqrt{u(r)}\,)<0
\end{equation*}
for real $b>\sqrt3$ and $r\ge r_b$, so that $q_b(r)$ decreases in $r\ge r_b$ if $b>\sqrt 3$. Recalling now \eqref{eq:S=S_1}, the definition of $S_1(b)$ in \eqref{eq:S_2 le S_1}, and Proposition~\ref{prop:F1}, 
%the fact that $p_{1,b}(r)=0$ on $[-\infty,r_b]$ for $b>1$, %and $r_b>0$ if $b>\sqrt3$, 
for $b>\sqrt3$ we have 
\begin{equation}\label{eq:S<q}
	S(b)=S_1(b)=\max_{r\ge r_b}p_{1,b}(r)\le\sup_{r\ge r_b}q_b(r)
	=q_b(r_b). 
\end{equation}
Next, $r-\B(r)\to1$ as $r\to\infty$ and hence $r_b-b=r_b-\B(r_b)\to1$ as $b\to\infty$. 
Since $\B(r_b)=b$, one has $g_b(r_b)=0$ and therefore 
\begin{equation}\label{eq:q sim}
	q_b(r_b)=\frac1{e^{r_b}-1}=\frac1{e^{b+1+o(1)}-1}\sim e^{-1-b} 
\end{equation}
%
%
%So, 
%\begin{equation}\label{eq:q sim}
%	q_b(r_b)=q_b(b+1+o(1))\sim e^{-1-b}
%\end{equation}
as $b\to\infty$. On the other hand, 
\begin{equation}\label{eq:S>}
	S(b)\ge p_{1,b}(\infty)=e^{-1-b}
\end{equation}
for $b>0$, by \eqref{eq:p_{1,b} special}. 
Now \eqref{eq:S(b) sim} follows from \eqref{eq:S<q}, \eqref{eq:q sim}, and \eqref{eq:S>}. 

This completes the proof of Theorem~\ref{th:df bounds}. 
\end{proof} 

%!!!!stopped here at 9;34 pm on 9/16/26

\begin{proof}[Proof of Theorem~\ref{th:upper bound}] % \\exp-bound\\3.nb \\checking\\Th 1.15 th(colon)upper bound.nb \\exp-bound\\positive r.nb \\exp-bound\\negative r.nb
By %\eqref{eq:S=S_1} and \eqref{eq:S_2 le S_1}, and because $p_{1,b}(r)=0$ if $b>1$ and $r\le r_b$ (whereas $r_b=-\infty$ for $b\in(0,1]$),  
\eqref{eq:S(b)=} and Proposition~\ref{prop:F1}, 
%\eqref{eq:F,g,G}, and because $p_{1,b}(r)=0$ for $r\le r_b$, 
for real $b>0$ we have 
\begin{equation*}
	S(b)=\max_{r\in\oR}p_{1,b}(r)=\max_{r\ge r_b}p_{1,b}(r). 
\end{equation*}
Hence, by \eqref{eq:p_1=} and \eqref{eq:F,g,G}, 
\begin{equation*}
C_{\sup}=\sup_{b\ge0}\max_{r\ge r_b}\RAT(b,r),	
\end{equation*}
where $r_0:=-\infty$ and 
\begin{equation}\label{eq:RAT}
	\RAT(b,r):=e^b\,\frac{e^{g_b(r)}-1}{e^r-1}
\end{equation}
for $r\ne0$, with 
\begin{equation}\label{eq:RAT(b,0)}
	\RAT(b,0):=e^b\Big(\frac12-\frac b{2\sqrt3}\Big),
\end{equation}
%$\RAT(b,0):=e^b(\frac12-\frac b{2\sqrt3})$, 
by continuity. 
This and other calculations in this proof were done in Mathematica notebook Proof\_Theorem2.7.nb.

Note that for $(b,r)=(\frac{34}{100},-\frac{56}{10})$ we have $r_b=-\infty$ and $\RAT(b,r)=0.681\ldots$, so that 
\begin{equation}\label{eq:>.68}
	C_{\sup}>0.68. 
\end{equation}
By \eqref{eq:S(b)=}, $S(0)=1-1/e<0.64$. 
%Also, as was noted at the end of Remark~\ref{rem:df bounds}, the functions $S$ is $1$-Lipshitz. So, for 
For 
\begin{equation*}
	b_1:=1/100,
\end{equation*}
by the downward monotonicity of $S$,  
\begin{equation*}
	\sup_{0\le b\le b_1}\max_{r\ge r_b}\RAT(b,r)
	=\sup_{0\le b\le b_1}e^b S(b)\le e^{1/100}S(0)<e^{1/100}0.64<0.65<0.68<C_{\sup}. 
\end{equation*}
Next, in view of \eqref{eq:S(b) sim}, for some real $b_2>b_1$ we have 
\begin{equation*}
	\sup_{b\ge b_2}\max_{r\ge r_b}\RAT(b,r)
	=\sup_{b\ge b_2}e^b S(b)<e^{-1}+0.01<0.68<C_{\sup}. 
\end{equation*} 
Further, uniformly in $b$ in the compact interval $[b_1,b_2]$ (or in any compact set) 
\begin{equation}\label{eq:RAT(b,infty)}
	\RAT(b,r)\underset{r\to\infty}\longrightarrow1/e<0.68<C_{\sup},
\end{equation}
%$\RAT(b,r)=0$ for $b>1$ and $\le r_b$, 
by \eqref{eq:>.68}, 
and 
\begin{equation}\label{eq:RAT(b,-infty)}
	\RAT(b,r)\underset{r\to-\infty}\longrightarrow e^b(1-e^{b-1})\le e/4<0.68<C_{\sup}. 
\end{equation}

It follows that
\begin{equation}\label{eq:C_*=}
	C_{\sup}=\RAT(b_\cri,r_\cri)
\end{equation}
for some critical point $(b_\cri,r_\cri)$ of the function $\RAT$ such that $b_\cri>0$ and $r_\cri>r_{b_\cri}$. For such a critical point, we have $r_\cri\ne0$, because, in view of \eqref{eq:RAT(b,0)}, for $b>0$ 
\begin{equation}\label{eq:RAT(b,0)<}
	\RAT(b,0)=e^b\Big(\frac12-\frac b{2\sqrt3}\Big)\le\frac{e^{\sqrt{3}-1}}{2 \sqrt{3}}<0.61<0.68<C_{\sup}. 
\end{equation}

We shall now show that such a critical point is unique and can be computed to any accuracy. Let 
\begin{equation}\label{eq:DrRAT}
	(D_r\RAT)(b,r):=(\dd_r\RAT)(b,r)\frac{(e^r-1)^2}{e^b}
	=e^r+e^{g(r)} (e^r (g'(r)-1)-g'(r)). 
\end{equation}
``Solving'' now the equation $(D_r\RAT)(b,r)=0$ for $e^{g(r)}$, we get 
\begin{equation}\label{eq:eg1}
	e^{g(r)}=\eg_1(b,r):=\frac{1}{1-(1-e^{-r}) g'(r)}.
\end{equation}

Next, letting 
\begin{equation*}
	R_1(r):=\frac{s(r)}{e^r-1}\quad\text{and}\quad 
	R_2(r):=\frac{\sqrt{u(r)}}{e^r-1}, 
\end{equation*}
we have 
\begin{equation}\label{eq:g,R1,R2}
	g(r)=g_b(r)=R_1(r)-b R_2(r)
\end{equation}
%$g(r)=R_1(r)-b R_2(r)$ 
and 
\begin{equation}\label{eq:DbRAT}
	(D_b\RAT)(b,r):=(\dd_b\RAT)(b,r)\frac{e^r-1}{e^b}
	=e^{g(r)}(1-R_2(r))-1. 
\end{equation}
``Solving'' now the equation $(D_b\RAT)(b,r)=0$ for $e^{g(r)}$, we get 
\begin{equation}\label{eq:eg2}
	e^{g(r)}=\eg_2(r):=\frac{1}{1-R_2(r)}.
\end{equation}

So, letting 
\begin{equation*}
	d(b,r):=\Big(\frac1{\eg_1(b,r)}-\frac1{\eg_2(r)}\Big)\,2(e^r-1)\sqrt{u(r)},
\end{equation*}
we have 
\begin{equation}\label{eq:d(b,r)}
	d(b,r)=
b r \tau(r)+2 u(r)-2\si(r) \sqrt{u(r)}, 
\end{equation}
where 
\begin{equation}\label{eq:tau(r)}
	\tau(r):=e^r (r-2)+r+2.
\end{equation}
Note that $r\tau(r)>0$ for real $r\ne0$. 

If $(b,r)=(b_\cri,r_\cri)$, then $d(b,r)=0$. Solving the latter equation for $b$, we get 
\begin{equation}\label{eq:tb}
	b=\tb(r):=2\frac{\si(r)\sqrt{u(r)}-u(r)}{r\tau(r)}>0 
\end{equation}
(for nonzero real $r$, assumed by default, according to our convention). 

Let now 
\begin{equation}\label{eq:Q(r)}
	Q(r):=(D_b\RAT)(\tb(r),r)=\exp(g_{\tb(r)}(r))(1-R_2(r))-1
\end{equation}
and 
\begin{equation}\label{eq:DQ}
	(DQ)(r):=Q'(r)\,2(e^r-1)^3 r^2 \tau(r)^2 \sqrt{u(r)} \exp(-g_{\tb(r)}(r))
	=c_0(r)(e^r-1)-2c_1(r)\sqrt{u(r)},  
\end{equation}
where $c_0(r)$ and $c_1(r)$ are certain E-polynomials, both of E-degree $6$. 

We find that 
\begin{equation}\label{eq:c_0}
	c_0(r)(e^r-1)\le0\iff(r=0\text{ or }r\le\tr_2),
\end{equation}
for a certain real number 
\begin{equation*}
	\tr_2:=-5.42\ldots<0,
\end{equation*}
and 
\begin{equation}\label{eq:c_1}
\text{$c_1(r)>0$ (for $r\ne0$)}. 	
\end{equation}
Repeated-hammering proofs of \eqref{eq:c_0} and \eqref{eq:c_1} will be given in Appendices~\ref{app:c0} and \ref{app:c1}, respectively. 

So, if $r\le\tr_2$, then $c_1(r) > 0$ and $c_0(r)(e^r-1)\le0$, so that $(DQ)(r) < 0$. 
% and hence $Q'(r) > 0$. 

And 
if $r > \tr_2$ and $r \ne 0$, then $c_1(r) > 0$ and $c_0(r)(e^r-1) > 0$, 
so that 
\begin{equation*}
	(DQ)(r) > 0 \iff r V(r)>0, 
\end{equation*}
%$(DQ)(r) > 0$ %and hence $Q'(r) < 0$) 
%iff $r V(r)>0$, 
where 
\begin{equation}\label{eq:V:=}
	V(r):=\frac{c_0(r)^2(e^r-1)^2-(2c_1(r)\sqrt{u(r)})^2}{e^r r^3 \tau(r)^2}, 
\end{equation}
which is an E-polynomial of E-degree $9$.
We find that 
\begin{equation}\label{eq:V}
	V(r)>0 \iff (r<\tr_3 \text{ or }r>0),  
\end{equation}
for a certain real number 
\begin{equation*}
	\tr_3=-3.04\ldots\in(\tr_2,0). 
\end{equation*}
A repeated-hammering proof of \eqref{eq:V} will be given in Appendix~\ref{app:V}.

Note also that $Q'(r)$ equals $r (DQ)(r)$ in sign.

Therefore and because $Q(0\pm)=0$ and $Q(-\infty+)=2/e-1<0$ (and because the functions $Q$ and $V$ are real analytic on $\R\setminus\{0\}$), we see that $Q$ is (strictly) increasing on $(-\infty,\tr_3]$, decreasing on $]\tr_3,0)$, and increasing on $(0,\infty)$. So, $Q$ has a unique nonzero root $\tr_1\in(-\infty,\tr_3)$, and 
\begin{equation}\label{eq:tr1}
	\tr_1=-5.6367982\ldots. 
\end{equation}
So, recalling \eqref{eq:tb} and \eqref{eq:C_*=}, and using interval calculations implemented via the Mathematica command Interval, we get 
\begin{equation}\label{eq:tbtr1}
	\tb(\tr_1)=0.336266195\ldots
\end{equation}
and 
\begin{equation}%\label{eq:C_*=0.68}
	C_{\sup}=\RAT(\tb(\tr_1),\tr_1)=0.681100549\ldots. 
\end{equation} 
\end{proof}

\section{Proof of Proposition~\ref{prop:p'(r)}}\label{proof:p'(r)}
%\begin{proof}[Proof of Proposition~\ref{prop:p'(r)}]
Take any real $r$. If $r<r_b$, then the right and left derivatives of $p_1$ at $r$ are $0$ and hence \eqref{eq:p'(r)} obviously holds. If $r=r_b$, then the left derivative of $p_1$ at $r$ is again $0$ and hence \eqref{eq:p'(r)} holds for the left derivative. 
If $r>r_b$, then $p_1$ is just differentiable at $r$. 

So, in the rest of this proof, we may and will assume that $r\ge r_b$ (or, equivalently, $b\le\B(r)$) and consider only the right derivative of $p_1$ at such $r$. 

By \eqref{eq:p_{1,b}} and \eqref{eq:f_{A,B,c,h}}, and because $\A(r)>0$ for all $r$,
\begin{equation*}
	p'_1(r)=\frac{\c'(r)}{\c(r)}\,p_1(r)
	+\c(r)e^{\h(r)\B(r)}\B'(r)
	+\c(r)\h'(r)\int_b^{\B(r)} x e^{\h(r)x}\,dx; 
\end{equation*}
one may note here that 
%here the functions $\c',\B',\h'$ are extended to $\R$ by continuity, so 
$\c'(0)=0$, $\B'(0)=\frac1{2\sqrt3}$, $\h'(0)=-\frac1{2\sqrt3}$. 

By Lemma~\ref{lem:B}, $\B'>0$. Also, 
\begin{equation*}
	0\le\c(r)\int_b^{\B(r)} x e^{\h(r)x}\,dx
	\le \B(r)\c(r)\int_b^{\B(r)} e^{\h(r)x}\,dx=\B(r) p_1(r). 
\end{equation*}

So, 
\begin{equation}\label{eq:p'_1>}
	p'_1(r)>-\frac{|\c'(r)|}{\c(r)}\,p_1(r)
	-|\h'(r)|\B(r) p_1(r). 
\end{equation}

Next, 
\begin{equation}\label{eq:crat-def}
	\c\rat(r):=\frac{\c'(r)}{\c(r)}\Big/\frac{e^r r}{2(e^r-1)^2u(r)}
\end{equation}
is an E-polynomial of E-degree $2$
and 
\begin{equation}\label{eq:tcrat}
	\wt{\c\rat}(r):=\Big(\frac{\c'(r)}{\c(r)}-\frac8{100}\Big)50(e^r-1)^2 u(r)
\end{equation}
is an E-polynomial of E-degree $4$. It will be proved in Appendix~\ref{app:crat} 
%\\checking\\Prop 2.1 prop(colon)p'(r).nb , \\checking\\crat-pos.nb
that 
\begin{equation}\label{eq:crat}
\text{$c\rat(r)>0>\wt{c\rat}(r)$ for $r>0$. }	
\end{equation}
Also, the function $\c$ is even (say, by the symmetry $A\leftrightarrow B$) and hence the function $\frac{\c'}\c$ is odd. 
Therefore and by continuity, 
\begin{equation*}
	\Big|\frac{\c'}{\c}\Big|\le\frac8{100}. 
\end{equation*}

Also, the function $\h$ is odd (say again, by the symmetry $A\leftrightarrow B$) and hence $\h'$ is even. Also, for real $r$, 
\begin{equation*}
	\h'(r)=-\frac{e^r\,r\,\tau(r)}{2(e^r-1)^2\sqrt{u(r)}}\le0,  
\end{equation*}
with $\tau(r)$ as defined in \eqref{eq:tau(r)}, so that 
$r\tau(r)>0$ for real $r\ne0$. 
%
%$\h'\le0$ 
Next,
\begin{equation}\label{eq:Dh'B}
	\wt{\h'\B}(r):=\Big(-\h'(r)\B(r)-\frac35\Big)10(e^r-1)^2 u(r)<0 
	\text{ for $r>0$},  
\end{equation}
%is an E-polynomial of E-degree $4$, which is $<0$ for $r>0$, 
as will be shown in Appendix~\ref{app:h'B}. % \\checking\\h'Bpos.nb 
Also, in view of $\B'>0$, the positive function $\B$ is increasing.
It follows that 
\begin{equation*}
	|\h'|\B=-\h'\B\le\frac35. 
\end{equation*}
Recalling now \eqref{eq:p'_1>} and Theorem~\ref{th:upper bound}, we get 
\begin{equation*}%\label{eq:p'_1>}
	p'_1(r)>-\Big(\frac8{100}+\frac35\Big)p_1(r)
	>-\Big(\frac8{100}+\frac35\Big)\frac{69}{100}\,e^{-b}
	>-\frac12\,e^{-b}.  
\end{equation*}
This completes the proof of Proposition~\ref{prop:p'(r)}. 
\qed
%\end{proof}

%\cite[Lemma~5.5]{lovasz-vempala}: $f(0)\ge1/8$. 

\newpage 

\section{Discussion}\label{discussion} 

\subsection{Consequences and related bounds}\label{related}
Theorems~\ref{th:df bounds} and \ref{th:upper bound} may be considered the main results of this paper. However, the general results given by  Theorems~\ref{th:2-piece}, \ref{rem:H}, \ref{th:4 zeros}, \ref{th:par1}, and  \ref{th:par2} may also be of interest. 

Theorem~\ref{th:upper bound} admits the following immediate corollary. 

\begin{corollary}\label{cor:two-side}
For any r.v. $X$ with distribution in $\L$ and any real $t$, 
\begin{equation}\label{eq:two-side}
	\P(|X|\ge t)\le 2C_{\sup}e^{-t}. 
\end{equation}
\end{corollary}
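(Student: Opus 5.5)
We split the two-sided tail into its two one-sided parts and bound each by Theorem~\ref{th:upper bound}, using that $\L$ is invariant under reflection.

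For $t\le0$ there is nothing to prove unless the right-hand side is below $1$. Since $C_{\sup}>0.68$, we have $2C_{\sup}e^{-t}\ge 2C_{\sup}>1\ge\P(|X|\ge t)$ for all $t\le0$, so \eqref{eq:two-side} holds trivially there. The same inequality $2C_{\sup}>1$ also covers the point $t=0$.

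Now take real $t>0$ and write
\begin{equation*}
	\P(|X|\ge t)=\P(X\ge t)+\P(-X\ge t).
\end{equation*}
The distribution of $X$ is in $\L$, so by the definition of $S$ in \eqref{eq:sups,infs} we have $\P(X\ge t)\le S(t)$.

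For the second term we use the reflection $x\leftrightarrow-x$, as in the proof of \eqref{eq:I(b)=}. The r.v.\ $-X$ still has mean $0$ and second moment $1$. Its density $f(-\cdot)$ is log concave, so the distribution of $-X$ is also in $\L$. Hence $\P(-X\ge t)\le S(t)$ as well.

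Finally, by \eqref{eq:C_*:=}, $S(t)\le C_{\sup}e^{-t}$ for all $t\ge0$. Adding the two one-sided bounds gives $\P(|X|\ge t)\le 2C_{\sup}e^{-t}$.

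There is no substantive obstacle here. The only points to check are the reflection invariance of $\L$, which is immediate, and the trivial regime $t\le 0$, which is handled by the numerical lower bound $C_{\sup}>1/2$ from Theorem~\ref{th:upper bound}.
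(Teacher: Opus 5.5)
Your proof is correct and follows the paper's route. The paper calls this an immediate corollary of Theorem~\ref{th:upper bound}: split $\P(|X|\ge t)$ into the two one-sided tails and use the reflection invariance of $\L$ to bound each by $S(t)\le C_{\sup}e^{-t}$; your explicit handling of $t\le0$ via $2C_{\sup}>1$ correctly fills in the trivial case.
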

The constant factor $2C_{\sup}=1.362\dots$ in \eqref{eq:two-side} is almost twice as good as the constant factor $e=2.718\dots$ in the bound $e^{1-t}=e\,e^{-t}$ in (used in this paper) inequality \eqref{eq:exp-decay} by Lovasz and Vempala. However, the bound in \eqref{eq:exp-decay} was obtained without assuming the zero-mean condition $\E X=0$; as for the condition $\E X^2\le1$ for \eqref{eq:exp-decay}, it is in this setting equivalent to the condition $\E X^2=1$, in view of rescaling. 

%\medskip
%\hrule
%\medskip

There have been other results concerning exponential decrease of the tails of logconcave distributions. 
In particular, Borell's Theorem~8.1 \cite{borell83} and Pr\'ekopa's Theorem~6 \cite{prekopa73}
%(see e.g.\ \cite{wellner-logconcav} 
imply that for any $\mu\in\L$ there is some real $\ep>0$ such that $\int_\R e^{\ep|x|}\mu(dx)<\infty$. 
Lemma~10 of Schoenberg \cite{schoenberg51} implies that for any $f\in\F$ there are some real $\ep>0$ and $a$ such that $f(x)\le e^{a-\ep x}$ for all real $x$.  
Essentially, these facts, with an unspecified $\ep>0$, follow from the observation that, if $\psi\colon\R\to\R$ is a convex function such that $\psi(x)\to\infty$ as $x\to\infty$, then for some $\ep\in(0,\infty]$ we have $\psi(x)/x\to\ep$ as $x\to\infty$. 
A multidimensional generalization of the mentioned result in \cite{schoenberg51} was given by Cule and Samworth \cite[Lemma~1]{cule-samworth}. 

Devroye's Inequality~1 \cite{devroye} states that for all log-concave p.d.f.'s $f$ on $[0,\infty)$ with a mode at $0$ and $f(0)=1$  and for all real $x\ge0$ 
%
%the maximum of $f(x)$ over all log-concave p.d.f.'s $f$ on $[0,\infty)$ with mode at $0$ and $f(0)=1$ is 
\begin{equation}\label{eq:devr}
	%m(x):=
	f(x)\le \begin{cases}
	1 &\text{ if }0\le x\le1, \\ 
	z_x &\text{ if }x>1,
	\end{cases}
%	1(0\le x<1)+1(x\ge1)z_x, 
\end{equation}
where, for $x>1$, $z_x$ is the unique solution $z\in(0,1)$ of the equation 
\begin{equation}\label{eq:z_x}
	xz e^{-xz}=x e^{-x}. 
\end{equation}
The upper bound on $f(x)$ in \eqref{eq:devr} is exact for each real $x\ge0$: for $x\in[0,1]$ the bound is attained by the uniform density over $[0,1]$ and for $x>1$ the bound is attained by
the p.d.f.\ $f_x$ given by the formula $f_x(y):=z_x^{y/x}\,1(0\le y\le x)$. 
Note also that $z_x\sim e^{-x}$ as $x\to\infty$, but $z_x>e^{-x}$ for $x>1$. %(???) 

Actually, the condition that
$f$ be supported on $[0,\infty)$ is not needed for \eqref{eq:devr}.
Also, the first case of the inequality in \eqref{eq:devr} is obvious, given the conditions that $f$ has a mode at $0$ and $f(0)=1$. 

As for the second case of the inequality in \eqref{eq:devr}, there the condition that $0$ is a mode of $f$ is not needed. 
%
%Actually, the conditions that $0$ a mode of $f$ and that $f$ be supported on $[0,\infty)$ are not needed for \eqref{eq:devr}. 
Also, the condition $f(0)=1$ can be generalized so as to allow $f(0)$ to be any positive number. 

Indeed, for any log-concave p.d.f.\ $f$ and any real $x_0$ and $x_1$ such that $x_0<x_1$, with $x_t$ as defined in \eqref{eq:x_t}, 
%\begin{equation}\label{eq:x_t}
%	x_t:=(1-t)x_0+tx_1
%\end{equation}
%for real $t$, 
we have 
\begin{equation}\label{eq:=L}
	\int_{x_0}^{x_1}f=(x_1-x_0)\int_0^1 f(x_t)\,dt
	\ge(x_1-x_0)\int_0^1 f(x_0)^{1-t}f(x_1)^t\,dt
	=(x_1-x_0)L(f(x_0),f(x_1)),
\end{equation}
the logarithmic mean of $f(x_0)$ and $f(x_1)$. Recall that 
\begin{equation*}
	L(\al,\be)=\int_0^1 \al^{1-t}\be^t\,dt
	=\left\{	
\begin{alignedat}{2}
	& \frac{\al-\be}{\ln\al-\ln\be} &&\text{ if }\al>0,\be>0, \al\ne\be, \\ 
	& 0 &&\text{ if }\al\ge0,\be\ge0, \al\be=0, \\ 
	& \al &&\text{ if }\al=\be>0,  
\end{alignedat}
	\right.
\end{equation*}
so that $L(\al,\be)$ is continuous in $(\al,\be)\in[0,\infty)^2$, increasing in $\al\ge0$ for each $\be>0$, and increasing in $\be\ge0$ for each $\al>0$. 

If $\ga:=f(0)>0$ and $x>0$, then \eqref{eq:=L} with $x_0=0$ and $x_1=x$, together with the inequality $1\ge\int_{x_0}^{x_1}f$, yields 
\begin{equation}\label{eq:1/x ge}
	\frac1x\ge L(\ga,f(x))
\end{equation}
Consider now the equation 
\begin{equation}\label{eq:z_{ga,x}}
	\frac1x=L(\ga,z)
\end{equation}
for $z$. Since $L(\ga,z)$ continuously increases from $0$ to $\infty$ as $z$ increases from $0$ to $\infty$, equation \eqref{eq:z_{ga,x}} has a unique root $z=z_{\ga,x}\in(0,\infty)$. Comparing \eqref{eq:1/x ge} and \eqref{eq:z_{ga,x}}, and recalling that $L(\cdot,\cdot)$ is continuously increasing in each of its two arguments, we get the inequality 
\begin{equation}\label{eq:f le z}
	f(x)\le z_{\ga,x}
\end{equation}
for all real $x>0$. 

By default in this section, let us use $z$ for $z_{\ga,x}$, for brevity.  
%If $z=0$, then \eqref{eq:f le z} provides the prefect upper bound, $0$, on $f(x)$. So, wlog $z
Recall that $\ga>0$ and $z>0$. If $z=\ga$, then $L(\ga,z)=\ga$ and hence \eqref{eq:z_{ga,x}} implies $x=1/\ga$. Until further notice in this section, let us exclude this case, thus assuming that $x\in(0,\infty)\setminus\{1/\ga\}$.  

Then equation \eqref{eq:z_{ga,x}} can be rewritten as 
\begin{equation}\label{eq:z}
	\frac1x=\frac{\ga-z}{\ln\ga-\ln z},
\end{equation}
which can be further rewritten as $x(\ga-z)=\ln\ga-\ln z$, and then as 
$e^{\ga x}e^{-xz}=\ga/z$, $ze^{-xz}=\ga e^{-\ga x}$, and finally as 
\begin{equation}\label{eq:eq z}
	-xze^{-xz}=-\ga x e^{-\ga x}. 
\end{equation}
We see that indeed equation \eqref{eq:z_x} is the special case of equation \eqref{eq:eq z}, corresponding to $\ga=1=f(0)$. 

Next, $\{-\ga x e^{-\ga x}\colon x>0\}=[-1/e,0)$. Recall that the case $x=1/\ga$ has been currently excluded, which excludes the possibility $z=\ga$, so that we have so far had $-xz\ne-\ga x$. It follows that 
\begin{equation}\label{eq:z=}
\begin{aligned}
	z=z_{\ga,x}&=
	\begin{cases}
	-\frac1x\,W_{-1}(-\ga x e^{-\ga x}) &\text{ if }0<x<1/\ga, \\ 
	-\frac1x\,W(-\ga x e^{-\ga x}) &\text{ if }x>1/\ga, \\ 
	\ga &\text{ if }x=1/\ga, 
	\end{cases} \\ 
	&=\begin{cases}
	-\frac1x\,W_{-1}(-\ga x e^{-\ga x}) &\text{ if }0<x\le1/\ga, \\ 
	-\frac1x\,W(-\ga x e^{-\ga x}) &\text{ if }x\ge1/\ga,  
	\end{cases}  	
\end{aligned}	
\end{equation}
where 
\begin{itemize}
	\item $W=W_0$ is the $0$ branch of the Lambert function \cite{knuth96}, whose restriction to the interval $[-1/e,0)$ is the function inverse to the bijective function \\ $[-1,0)\ni w\mapsto w e^w\in[-1/e,0)$; 
	\item $W_{-1}$ is the $-1$ branch of the Lambert function, whose restriction to the interval $[-1/e,0)$ is the function inverse to the bijective function \\  $(-\infty,-1]\ni w\mapsto w e^w\in[-1/e,0)$. 
\end{itemize}

We have thus proved the following proposition, using a very minimal amount of information on an arbitrary log-concave p.d.f.: its value at $0$. 
%Actually, of course, by re-scaling, wlog we may assume $\ga=1$ -- which we will not do, though, for the sake of convenience. Moreover, we can use horizontal shifting so as, in Proposition~\ref{prop:devr} below, to replace the condition $\ga:=f(0)>0$ by $\ga:=f(x_0)>0$, where $x_0$ is any real number, and then replace there $x$ by $x-x_0$. 
 
\begin{proposition}\label{prop:devr}
Let $f$ be any log-concave p.d.f., and let $\ga:=f(0)>0$. Then for all real $x>0$ 
\begin{equation*}
	f(x)\le%1\wedge 
	z_{\ga,x} 
\end{equation*}
and hence, applying the same result to the p.d.f. $x\mapsto f(-x)$, we get  
\begin{equation*}
	f(x)\le%1\wedge 
	z_{\ga,|x|} 
\end{equation*}
for all real $x<0$. 
\end{proposition}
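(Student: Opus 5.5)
The plan is to follow the derivation given just before the statement, checking that each step holds for an arbitrary log-concave p.d.f.\ $f$ with $\ga:=f(0)>0$. Fix a real $x>0$ and apply the log-concavity inequality \eqref{eq:log-concave} with $x_0=0$, $x_1=x$. Integrating over $t\in(0,1)$ gives \eqref{eq:=L}:
\begin{equation*}
1\ge\int_0^x f\ge x\,L(\ga,f(x)).
\end{equation*}
Here the first inequality holds because $f$ is a p.d.f.\ and the change of variables is trivial. Hence \eqref{eq:1/x ge} holds, namely $L(\ga,f(x))\le 1/x$.

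Next I compare this with the defining equation \eqref{eq:z_{ga,x}}. I need that, for fixed $\ga>0$, the map $z\mapsto L(\ga,z)$ is continuous and strictly increasing on $[0,\infty)$, with $L(\ga,0)=0$ and $L(\ga,z)\to\infty$ as $z\to\infty$. Both facts follow directly from the integral representation $L(\ga,z)=\int_0^1\ga^{1-t}z^t\,dt$. The integrand is strictly increasing in $z$ for each $t\in(0,1)$, which gives monotonicity, and continuity follows by dominated convergence on compacts. For the growth, note that $\int_0^1 z^t\,dt\ge\int_{1/2}^1 z^t\,dt\ge\tfrac12 z^{1/2}$ for $z\ge1$. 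So the root $z_{\ga,x}\in(0,\infty)$ of \eqref{eq:z_{ga,x}} exists and is unique. Then $L(\ga,f(x))\le 1/x=L(\ga,z_{\ga,x})$ together with strict monotonicity forces $f(x)\le z_{\ga,x}$; in the case $f(x)=0$ this is trivial. The explicit Lambert-$W$ formula \eqref{eq:z=} plays no role in the inequality itself; it only identifies $z_{\ga,x}$.

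For $x<0$, I apply the result just proved to $g(y):=f(-y)$. This $g$ is again a log-concave p.d.f., since reflection preserves both \eqref{eq:log-concave} and the total integral, and $g(0)=\ga$. This gives $f(x)=g(|x|)\le z_{\ga,|x|}$.

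There is no real obstacle here. The only point needing care is the strict monotonicity and the limiting behaviour of $L(\ga,\cdot)$ near $z=0$, where the closed form $(\al-\be)/(\ln\al-\ln\be)$ degenerates. Working with the integral representation rather than the closed form avoids that issue entirely.
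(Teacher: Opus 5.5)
Your proposal is correct and follows the paper's argument essentially verbatim: the logarithmic-mean bound \eqref{eq:=L} with $x_0=0$, $x_1=x$, then the unique root of \eqref{eq:z_{ga,x}} via continuity and strict monotonicity of $L(\ga,\cdot)$, then reflection $x\mapsto -x$ for $x<0$. Working with the integral representation of $L$ is fine; in your growth bound just keep the factor $\ga^{1-t}\ge\min(1,\ga)>0$, which you dropped.
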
 

Note that 
\begin{equation*}
	z_{\ga,x}\sim\ga e^{-\ga x}\text{ as }x\to\infty. 
\end{equation*}
This follows because, clearly, $W(z)\sim z$ as $z\to0$. 
On the other hand, 
\begin{equation*}
	z_{\ga,x}>\ga e^{-\ga x} 
\end{equation*}
for all real $x>0$: 
\begin{enumerate}[(i)]
	\item if $0<x\le1/\ga$ then $z_{\ga,x}\ge1/x>1/(e x)\ge\ga e^{-\ga x}$ because $W_{-1}(z)\le-1$ for $z\in[-1/e,0)$;
	\item if $x\ge1/\ga$ then $z_{\ga,x}>\ga e^{-\ga x}$ because $we^w>w$ for all real $w\ne0$ and hence $W(z)<z$ for all real $z\ne0$. 
\end{enumerate}

Of course, by re-scaling, in Proposition~\ref{prop:devr} we may wlog assume $\ga=1$ -- which we did not do, though, for the sake of convenience. 

Moreover, we can use horizontal shifting so as, also in Proposition~\ref{prop:devr}, to replace the condition $\ga:=f(0)>0$ by $\ga:=f(x_0)>0$, where $x_0$ is any real number, and then accordingly replace there $x$ by $x-x_0$. We did not do that either, for simplicity of writing. 

We can say more if $f\in\F$, that is, if $f$ is the l.s.c.\ p.d.f.\ of an isotropic log-concave distribution. Indeed, let $m$ denote a mode of $f$ (defined, say, by the condition $\limsup_{x\to m}f(x)\ge f(y)$ for all real $y$), so that $f$ is nonincreasing on $(m,\infty)$. Then, by Theorem~\ref{th:upper bound} and \eqref{eq:=L}, for all real $x_0>m_+$ and all real $x>x_0$,  
\begin{equation*}
	C_{\sup}\, e^{-x_0}\ge\int_{x_0}^\infty f \ge\int_{x_0}^x f \ge(x-x_0)L(f(x_0),f(x))
	\ge (x-x_0)f(x).
\end{equation*}
Now take any real $\thh>0$. Choosing $x_0=x-\thh$ for any $x>\thh+m_+$, we now have 
\begin{equation*}
	f(x)\le C_{\sup}\,\frac1\thh\,e^\thh e^{-x}; 
\end{equation*}
this holds for $x=\thh+m_+$ as well, since $f$ is l.s.c. 
We may want to choose $\thh=1$, so as to minimize $\frac1\thh\,e^\thh$. 

Note also that, according to \cite[Theorem~2]{johnson-rogers} (see also  \cite[Theorem~2.2]{sharma-etal} for a more elementary proof), we have  
\begin{equation}\label{eq:m le}
	m\le\sqrt3. 
\end{equation}
The upper bound $\sqrt3$ on $m$ in \eqref{eq:m le} is the best possible one, as it is attained in the limit when $f$ is the standardized version of the p.d.f.\ $\R\ni x\mapsto\frac{1+tx}{2\sqrt3}\,1(|x|\le\sqrt3)$ with $t\downarrow0$. 

We have thus obtained the following proposition. 

\begin{proposition}\label{prop:iso}
Let $f\in\F$ and let $m$ be a mode of $f$. Then, for 
any $x\ge1+m_+$, 
\begin{equation}\label{eq:f le}
		f(x)\le eC_{\sup}\, e^{-x}. 
\end{equation}
In view of \eqref{eq:m le}, inequality \eqref{eq:f le} holds for all $f\in\F$ and  all real $x\ge1+\sqrt3.$ 
\end{proposition}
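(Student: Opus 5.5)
The argument combines the tail bound of Theorem~\ref{th:upper bound} with the logarithmic-mean lower bound \eqref{eq:=L}, using monotonicity of $f$ to the right of the mode. Let $X$ have density $f\in\F$ and let $m$ be a mode of $f$. By log-concavity, $f$ is nonincreasing on $(m,\infty)$, hence on $(m_+,\infty)$.

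First I take real numbers $x_0,x$ with $m_+<x_0<x$. Since $x_0>0$, Theorem~\ref{th:upper bound} gives
\begin{equation*}
\int_{x_0}^\infty f=\P(X\ge x_0)\le S(x_0)\le C_{\sup}e^{-x_0}.
\end{equation*}
On the other hand, \eqref{eq:=L} gives $\int_{x_0}^{x}f\ge(x-x_0)L(f(x_0),f(x))$. The logarithmic mean lies between the minimum and maximum of its arguments, and $f(x)\le f(x_0)$ by monotonicity. So $L(f(x_0),f(x))\ge f(x)$. This also covers the degenerate case $f(x)=0$, and the case $f(x_0)=0$, which forces $f(x)=0$. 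Hence $(x-x_0)f(x)\le C_{\sup}e^{-x_0}$.

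Next I choose $x_0=x-1$. For $x>1+m_+$ this satisfies $x_0>m_+$ and $x-x_0=1$, so
\begin{equation*}
f(x)\le C_{\sup}e^{1-x}=eC_{\sup}e^{-x}.
\end{equation*}
The choice $\thh=1$ minimizes $e^\thh/\thh$.

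The remaining point is the endpoint $x=1+m_+$, which is the only delicate step. Since $f$ is nonincreasing on $(m_+,\infty)$, its values there are at most its limits from the left, and I would pass to the limit $x\downarrow 1+m_+$ using lower semicontinuity of $f\in\F$. This is exactly where the convention that $\F$ consists of l.s.c.\ densities matters. If $f$ jumps down at $1+m_+$ (the end of its support), its l.s.c.\ value there is the lower value, so the bound still holds. Otherwise $f$ is continuous at that point, and the bound follows by continuity.

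Finally, for the last sentence of the proposition, the Johnson--Rogers bound \eqref{eq:m le} gives $m\le\sqrt3$, so $1+m_+\le1+\sqrt3$. Therefore \eqref{eq:f le} holds for every $x\ge1+\sqrt3$.
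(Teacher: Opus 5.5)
Your proof is correct and follows the paper's argument essentially verbatim. It combines the tail bound $\int_{x_0}^\infty f\le C_{\sup}e^{-x_0}$ from Theorem~\ref{th:upper bound} with \eqref{eq:=L}, uses $L(f(x_0),f(x))\ge f(x)$ by monotonicity to the right of the mode, takes $x_0=x-1$ (the paper uses a general gap $\thh$ and then picks $\thh=1$), handles the endpoint by lower semicontinuity, and finishes with \eqref{eq:m le}. Your endpoint case analysis is valid but unnecessary, since lower semicontinuity gives $f(1+m_+)\le\liminf_{y\downarrow 1+m_+}f(y)\le eC_{\sup}e^{-1-m_+}$ in one step.
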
 

Proposition~\ref{prop:iso} may be compared with (used in this paper) inequality  \eqref{eq:f le1} by Lovasz and Vempala, and also with the following result by 
Karlin, Proschan, and Barlow \cite[Theorem~5]{karlin_etal61}: 
% Barlow, Karlin, and Proschan \cite[Theorem~5]{BKP}: 
if $X$ is a \emph{nonnegative} r.v.\ with a log-concave p.d.f.\ $f_X$ and $\E X=1$, then 
$f_X(x)\le e^{-x}$ for some (unspecified) real $x_0$ and all real $x>x_0$. 

In conclusion of this subsection, let us provide exact upper and lower bounds on the exponential moments of measures $\mu\in\L$. For any real $\ka>0$, any real $y$, and all real $x$, let 
\begin{equation}
	E_{\ka,y}(x):=e^{\ka x}\,1(x\le y)+e^{\ka y}\sum_{j=0}^3\frac{\ka^j(x-y)^j}{j!}
	%(1+\ka(x-y)+\ka^2(x-y)^2/2+\ka^3(x-y)^3/6)
	\,1(x>y). 
\end{equation}
Then condition \eqref{eq:H le} and hence the conclusion of Theorem~\ref{th:2-piece} hold for $H=E_{\ka,y}$. So, in view of formula \eqref{eq:f_{A,B,u,c,h,k}} for $f\in\F_2$, there is a minimizer $f_{A,B,u,c,h,k}$ of $\int_\R E_{\ka,y}f$ over $f\in\F$. 

%\begin{equation}\label{eq:f_{A,B,u,c,h,k}}
%	f(x)=f_{A,B,u,c,h,k}(x):=c e^{h(x-u)}1(-A<x\le u)+c e^{-k(x-u)}1(u<x<B)
%\end{equation}

Also, the function $E_{\ka,y}$ is $3$-good in the sense that, for any real  $\la_1,\la_2,\la_3$, the function 
\begin{equation*}
\R\ni x\mapsto R(x):=R_{\la_1,\la_2,\la_3}(x):=E_{\ka,y}(x)-\la_1-\la_2 x-\la_3 x^2
\end{equation*}
has at most three zeros (cf.\ the definitions of $4$-good functions in the statement of Theorem~\ref{th:4 zeros}); this follows because the function $E'_{\ka,y}$ is strictly convex. 

Reasoning now as in the proof of Theorem~\ref{th:4 zeros}, we see the following. 
If $f_{A,B,u,c,h,k}$ is not a one-piece function, then $-A<u<B$ and $h\ne-k$, so that, by \eqref{eq:ints}, $R$ has at least $4$ zeros, a contradiction. 

So, $f_{A,B,u,c,h,k}$ is a one-piece function, say with $h=-k$. However, for $f=f_{A,B,u,c,h,-h}$, the differentiation in $c$ and the differentiation in $h$ still yield $\int_{-A}^B Rf=0=\int_{-A}^B xR(x)f(x)$, so that, by Lemma~\ref{lem:2 zeros}, $R$ has at least two zeros in the interval $(-A,B)$. If $A$ and $B$ are both finite, then, in view of \eqref{eq:R(-A)} and \eqref{eq:R(B)}, $R(-A)=0=R(B)=0$, which yields at least four real zeros for $R$, again a contradiction. 

So, $f_{A,B,u,c,h,k}$ is a one-piece function and at least one of the endpoints, $-A$ or $B$, is infinite. Therefore, in view of Theorem~\ref{th:par1} and (say) \eqref{r=0 or pm infty}, the minimizer $f_{A,B,u,c,h,k}$ of $\int_\R E_{\ka,y}f$ over $f\in\F_2$ is one of the two functions, $f^+$ or $f^-$, where 
\begin{equation}
	f^+(x):=e^{-1-x}\,1(x>-1)\quad\text{and}\quad 
	f^-(x):=f^+(-x)=e^{-1+x}\,1(x<1)
\end{equation}
for real $x$. 
Note also that, for each real $x$, $E_{\ka,y}(x)$ is nondecreasing in $y$ and converges to $E_\ka(x):=e^{\ka x}$ as $y\to\infty$.  
It follows that %, for any $f\in\F_2$, 
\begin{equation}
	\inf_{f\in\F_2}\int_\R E_\ka f \ge \inf_{f\in\F_2}\int_\R E_{\ka,y}f
	=\min_\pm \int_\R E_{\ka,y}f^\pm
	\underset{y\to\infty}\longrightarrow\min_\pm \int_\R E_\ka f^\pm 
	\ge\inf_{f\in\F_2}\int_\R E_\ka f,
\end{equation}
by the monotone convergence theorem and because $f^\pm\in\F_2$. 
We conclude that 
\begin{equation}
	\inf_{f\in\F_2}\int_\R E_\ka f=\min_{f\in\F_2}\int_\R E_\ka f
	=\min_\pm \int_\R E_\ka f^\pm, 
\end{equation}
for any real $\ka>0$. The case $\ka=0$ is trivial. 

The case $\ka<0$ is dealt with similarly to the case $\ka>0$ or by using  ``horizontal'' reflection ($E_{\ka,y}(x)\leftrightarrow E_{\ka,y}(-x)$ and $E_\ka(x)\leftrightarrow E_\ka(-x)$). Similarly, or by ``vertical'' reflection   ($E_{\ka,y}\leftrightarrow-E_{\ka,y}$ and $E_\ka\leftrightarrow-E_\ka$), 
%(followed by passage to the limit), 
dealt with is $\sup_{f\in\F_2}\int_\R E_\ka f$. 

Thus, we obtain the following. 

\begin{proposition}\label{prop:exp}
For any real $\ka$, 
\begin{equation}
	\min_{f\in\F}\int_\R e^{\ka x}f(x)\,dx=
	\min_{f\in\F_2}\int_\R e^{\ka x}f(x)\,dx=\frac{e^{|\ka|}}{1+|\ka|}
\end{equation}
and 
\begin{equation}
	\max_{f\in\F}\int_\R e^{\ka x}f(x)\,dx=
	\max_{f\in\F_2}\int_\R e^{\ka x}f(x)\,dx=
	\left\{
	\begin{alignedat}{2}
	&\frac{e^{-|\ka|}}{1-|\ka|} &&\text{ if }|\ka|<1, \\
	&\infty &&\text{ if }|\ka|\ge1. 
	\end{alignedat}
	\right.
\end{equation}
\end{proposition}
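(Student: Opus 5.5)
The plan is to make rigorous the reduction sketched just before the statement. First, handle $\ka>0$ for the minimum; the case $\ka<0$ then follows by the reflection $x\leftrightarrow-x$, which maps $\F$ onto itself, and $\ka=0$ is trivial. For the maximum, use the vertical reflection $H\leftrightarrow-H$. The case $\ka=0$ is trivial in both directions.

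\emph{Minimum.} For $y\in\R$, take $H=E_{\ka,y}$. It is continuous, satisfies \eqref{eq:H le} since it is polynomial to the right of $y$ and bounded to the left, and it is $3$-good because $E_{\ka,y}'$ is strictly convex. By Theorem~\ref{th:2-piece}, $\inf_{f\in\F}\int E_{\ka,y}f$ is attained at some $f_{A,B,u,c,h,k}\in\F_2$. Running the Lagrange--Carath\'eodory argument from the proof of Theorem~\ref{th:4 zeros}, with Lemma~\ref{lem:k>1} justifying differentiation under the integral, one obtains the following.
\begin{itemize}
\item A genuinely two-piece minimizer would force $R$ to have at least $4$ zeros.
\item A one-piece minimizer with both $A,B$ finite would give $2+2$ zeros by Lemma~\ref{lem:2 zeros} together with \eqref{eq:R(-A)} and \eqref{eq:R(B)}.
\end{itemize}
Both contradict $3$-goodness, treating $\la_0=0$ separately as before. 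Hence the minimizer is one-piece with an infinite endpoint, and Theorem~\ref{th:par1} with \eqref{r=0 or pm infty} forces it to be $f^+$ or $f^-$.

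Since $E_{\ka,y}\uparrow E_\ka$ as $y\to\infty$ (the cubic Taylor polynomial lies below $e^{\ka x}$ for $x>y$ when $\ka>0$), the chain of inequalities displayed before the proposition, together with monotone convergence, gives
\begin{equation*}
\inf_{f\in\F}\int E_\ka f=\min_\pm\int E_\ka f^\pm .
\end{equation*}
Direct computation gives
\begin{equation*}
\int e^{\ka x}f^-(x)\,dx=\frac{e^\ka}{1+\ka},\qquad \int e^{\ka x}f^+(x)\,dx=\frac{e^{-\ka}}{1-\ka}\ \text{ for }\ka<1 \quad(\text{and }=\infty\text{ for }\ka\ge1).
\end{equation*}
It remains to check $e^\ka/(1+\ka)\le e^{-\ka}/(1-\ka)$ on $(0,1)$, i.e.\ $(1-\ka)e^{2\ka}\le1+\ka$. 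This holds because the difference $(1+\ka)-(1-\ka)e^{2\ka}$ vanishes at $0$ and has derivative $1-(1-2\ka)e^{2\ka}\ge0$, as $e^{-2\ka}\ge1-2\ka$. This yields the formula $e^{|\ka|}/(1+|\ka|)$, attained at $f^-$, which lies in $\F_1\subset\F_2$.

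\emph{Maximum.} Apply the same argument to $H=-E_{\ka,y}$, which also satisfies \eqref{eq:H le} and is $3$-good. The maximizer of $\int E_{\ka,y}f$ over $\F$ is again $f^+$ or $f^-$, so
\begin{equation*}
\sup_{f\in\F}\int E_{\ka,y}f=\max_\pm\int E_{\ka,y}f^\pm .
\end{equation*}
Interchange suprema, $\sup_{f}\int E_\ka f=\sup_f\sup_y\int E_{\ka,y}f=\sup_y\max_\pm\int E_{\ka,y}f^\pm$, and pass to the limit by monotone convergence to get $\max_\pm\int E_\ka f^\pm$. By the computation above this equals $e^{-\ka}/(1-\ka)$ for $0<\ka<1$ and $\infty$ for $\ka\ge1$. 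When the value is finite it is attained at $f^+\in\F_2$.

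The main obstacle is the zero-counting step, carried out separately for each sign of $H$. The delicate points are:
\begin{itemize}
\item justifying differentiation under the integral in $h$ and $k$ when an endpoint is infinite, using Lemma~\ref{lem:k>1} and the polynomial growth of $E_{\ka,y}$;
\item confirming that the boundary conditions \eqref{eq:R(-A)} and \eqref{eq:R(B)} supply genuinely new zeros, distinct from the interior ones given by Lemma~\ref{lem:2 zeros}.
\end{itemize}
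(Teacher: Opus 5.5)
Your proposal is correct and follows essentially the same route as the paper. You truncate to the $3$-good function $E_{\ka,y}$, use Theorem~\ref{th:2-piece} together with the multiplier/zero-counting argument from the proof of Theorem~\ref{th:4 zeros} to force the optimizer to be $f^+$ or $f^-$, and then let $y\to\infty$ by monotone convergence. Your explicit interchange $\sup_f\sup_y=\sup_y\sup_f$ for the maximum, and your explicit computation and comparison of $\int e^{\ka x}f^\pm(x)\,dx$, usefully fill in steps the paper only calls ``similarly''; the infimum chain does not reflect verbatim, since $-E_{\ka,y}\ge-E_\ka$.
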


%By monotone convergence, $\int_\R E_{\ka,y}f^\pm$ is nondecreasing to $\int_\R E_\ka f^\pm$ as $y$ increases to $\infty$. 

\bigskip

A broad survey on log-concavity was given by Saumard and Wellner \cite{wellner-logconcav}. 

%\subsection{On this paper}\label{this paper}

\subsection{The %localization 
finite-dimensional reduction method of Fradelizi and Gu\'edon}

Condition \eqref{eq:log-concave} that a function $f$ be log concave is a system of uncountably many inequalities, indexed by real $x_0,x_1$ and $t\in(0,1)$, and these inequalities are 
nonlinear in $f$. So, finding the optimal bound on a functional of log-concave distributions is a problem of nonlinear infinite-dimensional optimization, which may be very nontrivial -- cf.\ e.g.\ \cite{Fattorini_1999}.  

The crucial tools for dealing with such problems were provided by Theorem~1 of Fradelizi and Gu\'edon \cite{frad-guel} and by the extreme-point argument in the proof of their Theorem~2. These tools, used in the proof of Theorem~\ref{rem:H}, reduce the problem to a finite-dimensional one.

%The crucial tools in dealing with such problems were provided by Theorems~1 and 2  by Fradelizi and Gu\'edon \cite{frad-guel}, which reduce these problems to finite-dimensional ones. These tools were used in the present paper 
%%in the %Subsection~\ref{compact} 
%in the proof of Theorem~\ref{rem:H}. 

I learned about the paper \cite{frad-guel} from an
\href{https://mathoverflow.net/questions/512106/maximize-expectation-of-a-function-among-log-concave-random-variables/513788\#513788}
{answer by Bogdan Grechuk on MathOverflow}; see Supplement~A, described on page~\pageref{suppl}. 
%
%I learned about the paper \cite{frad-guel} from an answer by Bogdan Grechuk on \break 
%MathOverflow; see Supplement~A. %~\ref{suppl}. 
In that answer, the exact upper bound on %\break 
$\E(1+|X|)e^{-|X|}$ was obtained for all r.v.'s $X$ with isotropic log-concave distributions; the reduction there to the two-piece p.d.f.'s was based on \cite{frad-guel}, and the resulting finite-dimensional optimization was verified in Lean \url{https://github.com/Mirkes/OneShotSeparabilityTheorem/blob/main/LogConcaveFamily6Theorem2_WithExistence_Live.lean}. Based on that verification, ChatGPT provided an exposition of the corresponding proof.
%
%The answer relied on a Lean verification \url{https://github.com/Mirkes/OneShotSeparabilityTheorem/blob/main/LogConcaveFamily6Theorem2_WithExistence_Live.lean}, based on which ChatGPT provided an exposition of the corresponding proof. 
The part of the proof of Theorem~\ref{rem:H} presented in Subsection~\ref{compact} and the proof of Theorem~\ref{th:4 zeros} were adapted from the ChatGPT exposition.  

%Ideas from the proofs of Proposition~2.1 and Lemmas~5.1 and 5.3 in the ChatGPT exposition were used in the proofs of Theorems~\ref{rem:H} and \ref{th:4 zeros} 
%%Subsection~\ref{compact} and in the proofs of Lemmas \ref{lem:k>1}
%in the present paper. 

% at  \url{https://mathoverflow.net/questions/512106/maximize-expectation-of-a-function-among-log-concave-random-variables/513788#513788}. The answer relied on a Lean verification, based on which ChatGPT provided an exposition of the corresponding proof. 

%\section{Proof of Theorem~\ref{rem:H}}\label{2-piece proof}
%
%\subsection{``Compact'' version of Theorem~\ref{rem:H}}\label{compact} 

%
%
%!!!! $z_{\ga,x}\sim\ga e^{-\ga x}$ as $x\to\infty$ but (???) $z_{\ga,x}>\ga e^{-\ga x}$ for $x>0$ : if $0<x<1/\ga$ then $z_{\ga,x}>1/x>1/(e x)\ge\ga e^{-\ga x}$

% \\EJP\\paper\\devroye\\Untitled-1.nb
%\begin{figure}[htbp]
%	\centering
%		\includegraphics[width=.9\textwidth]{pic-devr.pdf}
%	\caption{1/2, 1, 2}
%	\label{fig:pic}
%\end{figure} 

%!!!! do $f(x)\le C_{\sup}\frac{e^\thh}\thh\,e^{-x}$ for $x\ge\tilde m_+ +\thh$ (p. 6 of the notes)

\vspace{1cm}

\appendix

\section{General formal description of repeated hammering} \label{app:hammer}

%The definitions of the polynomials $p_j(r)$ and other definitions given in any one of the subsequent appendices may conflict with definitions in other appendices or elsewhere in the paper. In particular, the definitions of the polynomials $p_1(r)$ in the appendices conflict with the definition of $p_1(r)$ previously given in \eqref{eq:p_{1,b}}. This is done intentionally, to keep simpler symbols. However, the definitions given in a particular appendix should be considered local, in the sense that, within any particular appendix, the definitions given there take precedence over the definitions of the same symbols given elsewhere in this paper.  

Let $\FF\colon I\to\R$ be an E-polynomial of E-degree $k$ as in Remark~\ref{rem:e-poly}, so that 
\begin{equation}\label{eq:E-poly}
	\FF=\sum_{j=0}^k \p_j \EE^j 
\end{equation}
and the polynomial $p_k$ is nonzero. 
The E-degree $k$ of $\FF$, the polynomials $\p_0,\dots,\p_k$, and their respective degrees, say $d_0,\dots,d_k$, are of course attributes/functions of $\FF$; as usual, the degree of the zero polynomial is assumed to be $-1$. In our considerations, the interval $I$ will usually be %$\R$ or 
$(0,\infty)$ or $(-\infty,0)$. %??? 

The $k$ rounds of repeated hammering of the E-polynomial $\FF$ starting with 
\begin{equation}\label{eq:FF_0}
	\wt\FF_0:=\FF 
\end{equation}
successively produce 
\begin{equation}\label{eq:wtFFqc}
	\wt\FF_{\qc(j)}:=
\left\{	
\begin{aligned}
	\frac{\wt\FF_{\qc(j-1)}^{(\q(j))}}\EE &\text{ if }p_{j-1}\ne0, \\ 
	\frac{\wt\FF_{\qc(j-1)}}\EE &\text{ if }p_{j-1}=0  
\end{aligned}
\right.	 
\end{equation}
for $j=1,\dots,k$, where 
\begin{equation}\label{eq:q,qc}
	\q(j):=(d_{j-1})_+ +1,\quad \qc(j):=\sum_{i=1}^j\q(i)\ \text{(with $\qc(0):=0$)},
\end{equation}
so that $\qc$ means ``$\q$ cumulative''. 

As should be clear from Remark~\ref{rem:e-poly}, the result of these $k$ rounds of hammering is the usual polynomial $\wt\FF_{\qc(k)}$. 

Let us provide more detail on the repeated hammering procedure. For all $\be=1,\dots,\qc(k)$ we have 
\begin{equation}\label{eq:wtFF_be}
	\wt\FF_\be:=
	\left\{
	\begin{aligned}
	\frac{\wt\FF'_{\be-1}}{\EE} &\text{ if $\be=\qc(j)$ and $p_{j-1}\ne0$ for some $j=1,\dots,k$}, \\ 
	\frac{\wt\FF_{\be-1}}{\EE} &\text{ if $\be=\qc(j)$ and $p_{j-1}=0$ for some $j=1,\dots,k$}, \\ 
	 \wt\FF'_{\be-1}&\text{ otherwise},
	\end{aligned}
	\right.
\end{equation}
with $\wt\FF_0$ still defined by \eqref{eq:FF_0}; also, the latter definition \eqref{eq:wtFF_be} is in agreement with the previous definition \eqref{eq:wtFFqc}. 

Let $\De$ denote the linear operator on the space of all differentiable functions $f$ that is the operator $f\mapsto f/\EE$ or the operator $f\mapsto f'/\EE$ or the operator $f\mapsto f'$. Then for any non-constant E-polynomial $\FF$ and any of the three choices of $\De$ we have  
\begin{equation}\label{eq:De impl}
\begin{aligned}
	\FF(0)\ge0\ \&\ \De\FF>0\text{ on }(0,\infty) 
	&\implies \FF>0\text{ on }(0,\infty); \\ 
	\FF(0)\le0\ \&\ \De\FF<0\text{ on }(0,\infty) 
	&\implies \FF<0\text{ on }(0,\infty); \\ 
	\FF(0)<0\ \&\ \De\FF>0\text{ on }(0,\infty) 
	&\implies \FF\text{ is $-+$ on }(0,\infty); \\ 
	\FF(0)>0\ \&\ \De\FF<0\text{ on }(0,\infty) 
	&\implies \FF\text{ is $+-$ on }(0,\infty); \\ 
		\FF(0)\ge0\ \&\ \De\FF\text{ is $+-$ on }(0,\infty) 
	&\implies \FF \text{ is $+-$ on }(0,\infty); \\ 
		\FF(0)\le0\ \&\ \De\FF\text{ is $-+$ on }(0,\infty) 
	&\implies \FF \text{ is $-+$ on }(0,\infty). 
\end{aligned}
\end{equation}
The last four implications hold because 
any non-constant E-polynomial function 
either increases to $\infty$ or decreases to $-\infty$ in a neighborhood of $\infty$. 

Note also that for any polynomial $\p$ and any $j=0,1,\dots$ we have $(p\EE^j)'=(j\p+p')E^j$. So, the E-polynomials $\wt\FF_\be$ are non-constant for all $\be=0,\dots,\qc(k)-1$ and therefore all the six implications in \eqref{eq:De impl} hold with $\wt\FF_\be$ in place of $\FF$, for any $\be=0,\dots,\qc(k)-1$. 

Thus, we obtain the following. 

\begin{proposition}\label{prop:hammer}
Let $\FF$ be any E-polynomial of E-degree $k$, as in \eqref{eq:E-poly}. 
\begin{enumerate}[(i)]
	\item If $\wt\FF_{\qc(k)}>0$ on $(0,\infty)$ and $\wt\FF_\be(0)\ge0$ for all $\be=0,\dots,\qc(k)-1$, then $\FF>0$ on $(0,\infty)$. 
	\item If $\wt\FF_{\qc(k)}<0$ on $(0,\infty)$ and $\wt\FF_\be(0)\le0$ for all $\be=0,\dots,\qc(k)-1$, then $\FF<0$ on $(0,\infty)$. 
	\item Suppose that $\wt\FF_{\qc(k)}>0$ on $(0,\infty)$ and there is some  $\be_*\in\{0,\dots,\qc(k)-1\}$ such that for all $\be\in\{0,\dots,\qc(k)-1\}$
	\begin{itemize}
	\item $\wt\FF_\be(0)\le0$ if $\be<\be_*$;
	\item $\wt\FF_\be(0)<0$ if $\be=\be_*$; 
	\item $\wt\FF_\be(0)\ge0$ if $\be>\be_*$.
\end{itemize} 
Then $\FF$ is $-+$ on $(0,\infty)$. 
	\item Suppose that $\wt\FF_{\qc(k)}<0$ on $(0,\infty)$ and there is some  $\be_*\in\{0,\dots,\qc(k)-1\}$ such that for all $\be\in\{0,\dots,\qc(k)-1\}$
	\begin{itemize}
	\item $\wt\FF_\be(0)\ge0$ if $\be<\be_*$;
	\item $\wt\FF_\be(0)>0$ if $\be=\be_*$; 
	\item $\wt\FF_\be(0)\le0$ if $\be>\be_*$.
\end{itemize} 
Then $\FF$ is $+-$ on $(0,\infty)$. 
\end{enumerate}
\end{proposition}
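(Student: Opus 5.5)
The plan is to iterate the six implications in \eqref{eq:De impl} along the chain $\wt\FF_0,\wt\FF_1,\dots,\wt\FF_{\qc(k)}$ defined in \eqref{eq:wtFF_be}, going backward from the last polynomial $\wt\FF_{\qc(k)}$ to $\wt\FF_0=\FF$. By construction, $\wt\FF_\be=\De_\be\wt\FF_{\be-1}$ for each $\be=1,\dots,\qc(k)$, where $\De_\be$ is one of the three operators $f\mapsto f'$, $f\mapsto f'/\EE$, $f\mapsto f/\EE$. We have already noted that each $\wt\FF_\be$ with $\be\le\qc(k)-1$ is a non-constant E-polynomial. So \eqref{eq:De impl} applies to the pair $(\wt\FF_{\be-1},\De_\be)$ for every $\be=1,\dots,\qc(k)$.

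For part (i) we use downward induction on $\be$, with the claim that $\wt\FF_\be>0$ on $(0,\infty)$. The base case $\be=\qc(k)$ is a hypothesis. For the step, suppose $\wt\FF_\be>0$ on $(0,\infty)$ and $\wt\FF_{\be-1}(0)\ge0$. Then the first implication in \eqref{eq:De impl} gives $\wt\FF_{\be-1}>0$ on $(0,\infty)$. Reaching $\be=0$ yields $\FF>0$. Part (ii) is the same argument with the second implication, or it follows from (i) applied to $-\FF$, since every step of the hammering is linear and so commutes with negation.

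For part (iii) we again run the downward induction, split at $\be_*$.
\begin{itemize}
\item For $\be$ from $\qc(k)$ down to $\be_*+1$: the hypothesis $\wt\FF_{\be-1}(0)\ge0$ holds for every index $\be-1>\be_*$, so, as in (i), $\wt\FF_\be>0$ on $(0,\infty)$ for every $\be\ge\be_*+1$.
\item At the index $\be_*$: we have $\wt\FF_{\be_*}(0)<0$ and $\De\wt\FF_{\be_*}=\wt\FF_{\be_*+1}>0$, so the third implication gives that $\wt\FF_{\be_*}$ is $-+$ on $(0,\infty)$.
\item For the remaining indices $\be<\be_*$: we have $\wt\FF_\be(0)\le0$ and $\De\wt\FF_\be=\wt\FF_{\be+1}$ is $-+$, so the sixth implication propagates the pattern $-+$ down to $\wt\FF_0=\FF$.
\end{itemize}
Part (iv) follows from (iii) applied to $-\FF$, or symmetrically by using the fourth and fifth implications.

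The only step needing care is checking that \eqref{eq:De impl} holds for all three forms of $\De$ and in the sign-pattern cases. For $\De f=f/\EE$ the sign of $f$ equals the sign of $\De f$, so these cases are immediate. For the derivative operators the sign of $\De f$ is the sign of $f'$. In the third and fourth implications, $f$ is then strictly monotone on $(0,\infty)$ starting from a value of strict sign. It tends to $\pm\infty$ at $\infty$ because it is a non-constant E-polynomial, so it changes sign exactly once. In the last two implications, $f$ is $\u\d$ or $\d\u$ with $f(0)$ of the appropriate weak sign, and the same behavior at $\infty$ forces exactly one sign change.

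This last point, the behavior at infinity, is the main obstacle, and I would treat it explicitly. A non-constant E-polynomial is dominated at $\infty$ by its leading term $\p_j(x)e^{jx}$, taking the largest $j$ with $\p_j\ne0$ and the top power of $x$ in $\p_j$, so it is eventually strictly monotone with an infinite limit. The implications cited are taken as established in the text preceding the proposition. The proof therefore reduces to this bookkeeping of indices.
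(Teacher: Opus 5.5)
Your proof is correct and follows the paper's route: the paper obtains Proposition~\ref{prop:hammer} by applying the six implications in \eqref{eq:De impl} backward along the chain $\wt\FF_{\qc(k)},\dots,\wt\FF_0=\FF$, using that each $\wt\FF_\be$ with $\be\le\qc(k)-1$ is non-constant. Your downward induction (split at $\be_*$ for (iii), and negation for (ii) and (iv)) makes that bookkeeping explicit; one small correction is that for $\De f=f/\EE$ the third and fourth implications hold only vacuously, since their hypotheses contradict continuity at $0$, not because the signs of $f$ and $\De f$ agree.
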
 

The signs of $\wt\FF_{\qc(k)}$ and of $\wt\FF_\be(0)$ for all $\be=0,\dots,\qc(k)-1$ can be quickly computed by the command \texttt{hammer} in the Mathematica package hammer.wl. This package was produced by saving the Mathematica notebook hammer.nb as a (plain-text) .wl \break 
(Wolfram Language) file. 
The files hammer.nb and hammer.wl are included with the other Mathematica notebooks in Supplement~B, described on page~\pageref{mathematica}. Put all these files into the same folder.

%The files hammer.nb and hammer.wl are in the same folder, %\break 
%MathematicaFiles, as the other Mathematica notebooks with details of calculations pertaining to the present paper. 

\section{Repeated-hammering proof of the fact that \texorpdfstring{$\B'(r)>0$}{B'} for real \texorpdfstring{$r\ne0$}{r} } \label{app:B}
Refer to Appendix~\ref{app:hammer} for a general description of repeated hammering and corresponding notations. 

Let here $\FF=D\B$. Then, by \eqref{eq:DB}, $\FF$ 
%\begin{equation}
%	(D\B)(r)=\sum_{j=0}^2 p_j(r)e^{jr}
%\end{equation}
is an E-polynomial of E-degree $k=2$, with 
\begin{equation*}
	\p_0(r):=r^2+4 r+2,\quad \p_1(r):=-(r^3-r^2+4 r+4),\quad \p_2(r):=2.
\end{equation*}

The $k=2$ rounds of hammering of this E-polynomial $\FF$ result in the polynomial 
\begin{equation*}
	\wt\FF_7(r)=\wt\FF_{\qc(k)}=16>0.
\end{equation*}

%The two corresponding rounds of repeated hammering produce 
%\begin{equation}
%	(D\B)_3(r):=\frac{(D\B)^{(3)}(r)}{e^r}; \quad 
%	(D\B)_7(r):=\frac{(D\B)_3^{(4)}(r)}{e^r}=16>0.  
%\end{equation}
Also, 
\begin{equation}\label{eq:ders B}
	\big(\sign\wt\FF_0^{(j)}(0)\colon j=0,1,2\big)=(0,0,0), \quad 
	\big(\sign\wt\FF_3^{(j)}(0)\colon j=0,1,2,3\big)=(0,0,0,1).  
\end{equation}
These calculations were done in Mathematica notebook DetailsAppendixB,positive\_r.nb. 

By Proposition~\ref{prop:hammer}(i), $(D\B)(r)=\FF(r)>0$ for all real $r>0$.

The last $1$ in \eqref{eq:ders B} makes the consideration of the sign of $(D\B)(r)$ for real $r<0$ somewhat less straightforward. This is overcome by the following reflection trick. Now redefine $\FF$ by the formula 
\begin{equation*}
	\FF(r):=(D\B)(-r)/e^{-2r}=\sum_{j=0}^2 \p_{2-j}(-r)e^{jr},
\end{equation*} 
which also is an E-polynomial of E-degree $2$.  

The corresponding $k=2$ rounds of repeated hammering of the latter $\FF$ result in 
$\wt\FF_5(r)=\wt\FF_{\qc(k)}(r)=2 r (5 + r)>0$ for $r>0$. 
Also, here 
\begin{equation*}%\label{eq:ders B}
	\wt\FF_0(0)=0=\wt\FF_1^{(i)}(0)  
\end{equation*}
for $i=0,1,2,3$. 
These latter calculations were done in Mathematica notebook 
DetailsAppendixB,negative\_r.nb.

Again by Proposition~\ref{prop:hammer}(i), $\FF(r)>0$ for $r>0$ and hence 
$(D\B)(r)=\FF(-r)e^{2r} >0$ for $r<0$.

Thus, $(D\B)(r)>0$ and hence $\B'(r)>0$ for all real $r\ne0$. \qed 
 
\section{Repeated-hammering proof of \texorpdfstring{\eqref{eq:C1,C2<0}}{{eq:C1,C2<0}}
} \label{app:C1,C2<0}
Refer again to Appendix~\ref{app:hammer} for a general description of repeated hammering and corresponding notations. 

\subsection{Proving \texorpdfstring{\eqref{eq:C1<0}}{{eq:C1<0}}
} \label{app:C1<0}

In this subsection, let $\FF=C_1$, %so that  
%\begin{equation}
%	C_1(r)=\sum_{j=0}^7 p_j(r)e^{jr}, 
%\end{equation}
which is an E-polynomial of E-degree $k=7$, with 
\begin{equation*}
\begin{aligned}
	\p_0(r)&:=-8,\\ 
	\p_1(r) &:= -2 r^4-20 r^3-20 r^2-40 r+40, \\
	\p_2(r) &:= -3 r^6-6 r^5-38 r^4+44 r^3+60 r^2+200 r-72, \\
	\p_3(r) &:= -4 r^7-5 r^6-46 r^5+40 r^4-32 r^3-40 r^2-400 r+40, \\
	\p_4(r) &:= 4 r^7-5 r^6+46 r^5+40 r^4+32 r^3-40 r^2+400 r+40, \\
	\p_5(r) &:= -3 r^6+6 r^5-38 r^4-44 r^3+60 r^2-200 r-72, \\
	\p_6(r) &:= -2 r^4+20 r^3-20 r^2+40 r+40, \\
	\p_7(r) &:= -8,
\end{aligned}
\end{equation*}
of degrees 
\begin{equation*}
	(d_0,\dots,d_7)=(0, 4, 6, 7, 7, 6, 4,0). 
\end{equation*}
The $k=7$ corresponding rounds of repeated hammering of this $\FF$ result in 
\begin{equation*}
	\wt\FF_{\qc(k)}(r)=-1872381094133760000000<0.
\end{equation*}
Also, here $\wt\FF_{\qc(j-1)}^{(i)}(0)\le0$ for all $j=1,\dots,7$ and $i=0,\dots,\q(j)-1$. 
These calculations were done in Mathematica notebook DetailsAppendixC1.nb. 

By Proposition~\ref{prop:hammer}(ii), 
\begin{equation}\label{eq:C1<0}
	C_1(r)=\FF(r)<0\text{ if }r>0. 
\end{equation}

\subsection{Proving \texorpdfstring{\eqref{eq:C2<0}}{{eq:C2<0}}
} \label{app:C2<0}

In this subsection, let $\FF=C_2$, 
which is an E-polynomial of E-degree $k=4$, with 
\begin{equation*}
\begin{aligned}
	\p_0(r)&:=r^3+8 r^2+16 r+8,\\ 
	\p_1(r) &:= r^5+4 r^4+14 r^3+16 r^2-32 r-32, \\
	\p_2(r) &:=48-48 r^2, \\
	\p_3(r) &:= -r^5+4 r^4-14 r^3+16 r^2+32 r-32, \\
	\p_4(r) &:= -r^3+8 r^2-16 r+8, 
\end{aligned}
\end{equation*}
of degrees 
\begin{equation*}
	(d_0,\dots,d_4)=(3,5,2,5,3). 
\end{equation*}
The $k=4$ rounds of repeated hammering of the latter $\FF$ result in 
\begin{equation*}
	\wt\FF_{\qc(k)}(r)=-41472 (36 r^3+846 r^2+5607 r+9676)<0
\end{equation*}
for $r>0$. 
Also, here $\wt\FF_{\qc(j-1)}^{(i)}(0)\le0$ for all $j=1,\dots,4$ and $i=0,\dots,\q(j)-1$.  
These calculations were done in Mathematica notebook %\break 
DetailsAppendixC2.nb.

Again by Proposition~\ref{prop:hammer}(ii), 
\begin{equation}\label{eq:C2<0}
	C_2(r)=\FF(r)<0\text{ if }r>0. 
\end{equation}

\medskip
\hrule
\medskip

Thus, $C_1(r)<0$ and $C_2(r)<0$ if $r>0$.

To complete the proof of \eqref{eq:C1,C2<0}, it remains to note the symmetries 
\begin{equation*}
	C_1(-r)=e^{-7r}C_1(r)\quad\text{and}\quad C_2(-r)=e^{-4r}C_2(r)
\end{equation*}
for all real $r$. 
\qed 

\section{Repeated-hammering proof of \texorpdfstring{\eqref{eq:DDrhoB(r)<0}}{app:DDrhoB}
}\label{app:DDrhoB}

Let in this appendix 
\begin{equation*}
	\FF(r):=\frac{(DD\rho\B)(-r)}{e^{-6r}}
\end{equation*}
for real $r$,  
%Then  
%\begin{equation}
%	\FF(r)=\sum_{j=0}^6 p_j(r)e^{jr}
%\end{equation}
which is an E-polynomial of E-degree $6$, with 
\begin{equation*}
\begin{aligned}
	\p_0(r)&:=8,\\ 
	\p_1(r) &:= -2 r^5-16 r^4-28 r^3-28 r^2+24 r-32, \\
	\p_2(r) &:=-2 r^7-7 r^6-32 r^5-36 r^4-24 r^3+72 r^2-96 r+40, \\
	\p_3(r) &:= 2 r^7+22 r^5+48 r^4+136 r^3-48 r^2+144 r, \\
	\p_4(r) &:= -r^6-4 r^5+4 r^4-88 r^3-8 r^2-96 r-40, \\ 
	\p_5(r) &:= 4 r^3+12 r^2+24 r+32, \\ 
	\p_6(r) &:=  -8
\end{aligned}
\end{equation*}
of degrees 
\begin{equation*}
	(d_0,\dots,d_6)=(0,5,7,7,6,3,0). 
\end{equation*}
The $k=6$ corresponding rounds of repeated hammering of the current $\FF$ result in 
\begin{equation*}
	\wt{\FF}_{\qc(6)}(r)=-41278242816000000<0.
\end{equation*}
Also, $\wt{\FF}_{\qc(j-1)}^{(i)}(0)\le0$ for all $j=1,\dots,6$ and $i=0,\dots,\q(j)-1$. 
These calculations were done in Mathematica notebook DetailsAppendixD.nb. 

Again by Proposition~\ref{prop:hammer}(ii), $\FF(r)<0$ if $r>0$ and hence $(DD\rho\B)(r)<0$ if $r<0$. \qed

\section{Repeated-hammering proof of \texorpdfstring{\eqref{eq:DrhoB}}{app:DDrhoB}
}\label{app:DrhoB}

\subsection{Repeated-hammering proof of \texorpdfstring{\eqref{eq:DrhoB}}{app:DDrhoB}, 
case \texorpdfstring{$r<0$}{r<0}
}\label{app:DrhoB,r<0}

To consider the case $r<0$, let  
\begin{equation*}
	\FF(r):=\frac{(D\rho\B)(-r)}{e^{-4r}}, 
\end{equation*}
%Then  
%\begin{equation}
%	\FF(r)=\sum_{j=0}^4 p_j(r)e^{jr}
%\end{equation}
which is an E-polynomial of E-degree $k=4$, with 
\begin{equation*}
\begin{aligned}
	\p_0(r)&:=-4 (r^2+2 r+2),\\ 
	\p_1(r) &:= r^6+5 r^5+16 r^4+20 r^3+24 r^2+20 r+32, \\
	\p_2(r) &:= 3 r^5-4 r^4-12 r^3-36 r^2-12 r-48, \\
	\p_3(r) &:= -8 r^3+16 r^2-4 r+32, \\
	\p_4(r) &:= 4 r-8 
\end{aligned}
\end{equation*}
of degrees 
\begin{equation*}
	(d_0,\dots,d_4)=(2,6,5,3,1). 
\end{equation*}
The $k=4$ corresponding rounds of repeated hammering of the current $\FF$ result in 
%starting with 
%\begin{equation}
%	\wt{f}_0(r):=f(r) 
%\end{equation}
%successively produce 
%\begin{equation}
%	\wt{f}_{\qc(j)}(r):=\frac{\wt{f}_{\qc(j-1)}^{(\q(j))}(r)}{e^r} 
%\end{equation}
%for $j=1,\dots,4$, where 
%\begin{equation}
%	\q(j):=d_{j-1}+1,\quad \qc(j):=\sum_{i=1}^j\q(i)\ \text{(with $\qc(0):=0$)}.
%\end{equation}
%
%We have 
\begin{equation*}
	\wt{\FF}_{\qc(4)}(r)=2985984 (97 + 12 r)>0 
\end{equation*}
for $r>0$. 
Also, $\wt{\FF}_{\qc(j-1)}^{(i)}(0)\ge0$ for all $j=1,\dots,4$ and $i=0,\dots,\q(j)-1$. 
These calculations were done in Mathematica notebook DetailsAppendixE1.nb. 

By Proposition~\ref{prop:hammer}(i), $\FF(r)>0$ if $r>0$ and hence 
\begin{equation}\label{eq:DrhoB>0,r<0}
\text{$(D\rho\B)(r)>0$ if $r<0$.}	
\end{equation}
 
\subsection{Repeated-hammering proof of \texorpdfstring{\eqref{eq:DrhoB}}{app:DDrhoB}, 
case \texorpdfstring{$r>0$}{r>0}
}\label{app:DrhoB,r>0}

To consider the case $r>0$, let   
\begin{equation*}
	\FF(r):=(D\rho\B)(r),
\end{equation*}
which is an E-polynomial of E-degree $4$, with 
\begin{equation*}
\begin{aligned}
	\p_0(r)&:=-4 (r+2),\\ 
	\p_1(r) &:= 8 r^3+16 r^2+4 r+32, \\
	\p_2(r) &:= -3 r^5-4 r^4+12 r^3-36 r^2+12 r-48, \\
	\p_3(r) &:= r^6-5 r^5+16 r^4-20 r^3+24 r^2-20 r+32, \\
	\p_4(r) &:= -4 r^2+8 r-8 
\end{aligned}
\end{equation*}
of degrees 
\begin{equation*}
	(d_0,\dots,d_4)=(1,3,5,6,2). 
\end{equation*}
The $k=4$ corresponding rounds of repeated hammering of the latter $\FF$ result in  
\begin{equation*}
	\wt{\FF}_{\qc(4)}(r)=-13824 (2623 + 520 r + 24 r^2)<0
\end{equation*}
for $r>0$. 

Next, $\wt{\FF}_{\qc(j-1)}^{(i)}(0)\ge0$ for $j=1,2,3$ and all $i=0,\dots,\q(j)-1$, 
$\wt{\FF}_{\qc(j-1)}^{(i)}(0)>0$ for $j=4$ and $i=0,1$, and  $\wt{\FF}_{\qc(j-1)}^{(i)}(0)<0$ for $j=4$ and $i=2,\dots,\q(4)-1$. 
These calculations were done in Mathematica notebook DetailsAppendixE2.nb. 

By Proposition~\ref{prop:hammer}(iv),    
$\FF$ is $+-$ on $(0,\infty)$. 
We also have 
\begin{equation}\label{eq:0.329}
	\FF(3290/1000)>0>\FF(3291/1000).
\end{equation}
Therefore and because here $\FF=D\rho\B$, for some real $r_*\ap3.29$ and all real $r>0$ we have 
$(D\rho\B)(r)>0\iff r<r_*$. It is also clear (cf.~\eqref{eq:0.329}) that $r_*$ can be effectively computed to any precision. 

\medskip
\hrule
\medskip

To complete the proof of \eqref{eq:DrhoB}, it remains to recall \eqref{eq:DrhoB>0,r<0}. \qed 

\section{Repeated-hammering proof of \texorpdfstring{\eqref{eq:c_0}}{app:c0}
}\label{app:c0}

\subsection{Repeated-hammering proof of \texorpdfstring{\eqref{eq:c_0}}{app:c0,r>0}, case \texorpdfstring{$r>0$}{c0,r>0}
}\label{app:c0,r>0}

To consider the case $r>0$, let  
\begin{equation*}
	\FF(r):=c_0(r), 
\end{equation*}
which is an E-polynomial of E-degree $k=6$, with 
\begin{equation*}
\begin{aligned}
	\p_0(r)&:=4 (2 + r)^2, \\ 
	\p_1(r) &:= -r^6-4 r^5-4 r^4+8 r^3+12 r^2-48 r-96, \\
	\p_2(r) &:= -2 r^7-15 r^6-44 r^5-52 r^4-56 r^3-80 r^2+240, \\
	\p_3(r) &:= 2 r^7+r^6+44 r^5+108 r^4+120 r^3+120 r^2+160 r-320, \\
	\p_4(r) &:= -r^6+4 r^5-44 r^4-104 r^3-60 r^2-240 r+240, \\
	\p_5(r) &:= -8 r^4+32 r^3-4 r^2+144 r-96, \\
	\p_6(r) &:= 8 r^2-32 r+16  
\end{aligned}
\end{equation*}
of degrees 
\begin{equation*}
	(d_0,\dots,d_6)=(2,6,7,7,6,4,2). 
\end{equation*}
The $k=6$ corresponding rounds of repeated hammering of the current $\FF$ result in 
\begin{equation*}
	\wt{\FF}_{\qc(6)}(r)=33022594252800000 (36053 + 5880 r + 225 r^2)>0 
\end{equation*}
for $r>0$. 
Also, $\wt{\FF}_{\qc(j-1)}^{(i)}(0)\ge0$ for all $j=1,\dots,6$ and $i=0,\dots,\q(j)-1$. 
These calculations were done in Mathematica notebook DetailsAppendixF1.nb. 

By Proposition~\ref{prop:hammer}(i), $\FF(r)>0$ if $r>0$ and hence 
\begin{equation}\label{eq:c0>0,r>0}
\text{$c_0(r)(e^r-1)=\FF(r)(e^r-1)>0$ if $r>0$.}	
\end{equation} 

\subsection{Repeated-hammering proof of \texorpdfstring{\eqref{eq:c_0}}{app:c0,r<0}, case \texorpdfstring{$r<0$}{c0,r<0}
}\label{app:c0,r<0}

To consider the case $r<0$, let   
\begin{equation*}
	\FF(r):=\frac{c_0(-r)}{e^{-6r}},
\end{equation*}
which is an E-polynomial of E-degree $6$, with 
\begin{equation*}
\begin{aligned}
	\p_0(r)&:=8 (r^2+4 r + 2),\\ 
	\p_1(r) &:= -8 r^4-32 r^3-4 r^2-144 r-96, \\
	\p_2(r) &:= -r^6-4 r^5-44 r^4+104 r^3-60 r^2+240 r+240, \\
	\p_3(r) &:= -2 r^7+r^6-44 r^5+108 r^4-120 r^3+120 r^2-160 r-320, \\
	\p_4(r) &:= 2 r^7-15 r^6+44 r^5-52 r^4+56 r^3-80 r^2+240, \\
	\p_5(r) &:= -r^6+4 r^5-4 r^4-8 r^3+12 r^2+48 r-96, \\
	\p_6(r) &:= 4 r^2-16 r+16 
\end{aligned}
\end{equation*}
of degrees 
\begin{equation*}
	(d_0,\dots,d_6)=(2,4,6,7,7,6,2). 
\end{equation*}
The $k=6$ corresponding rounds of repeated hammering of the latter $\FF$ result in  
\begin{equation*}
	\wt{\FF}_{\qc(6)}(r)=206391214080000 (76283 + 10740 r + 360 r^2)>0
\end{equation*}
for $r>0$. 

Next, $\wt{\FF}_{\qc(j-1)}^{(i)}(0)\le0$ for $j=1,2,3,4$ and all $i=0,\dots,\q(j)-1$, $\wt{\FF}_{\qc(j-1)}^{(i)}(0)<0$ for $j=5$ and $i=0$ -- whereas $\wt{\FF}_{\qc(j-1)}^{(i)}(0)>0$ for $j=5$ and $i=1,\dots,\q(5)-1$ and for $j=6$ and all $i=0,\dots,\q(6)-1$. 
These calculations were done in Mathematica notebook DetailsAppendixF2.nb.

By Proposition~\ref{prop:hammer}(iii),  
$\FF$ is $-+$ on $(0,\infty)$. We also have 
\begin{equation}\label{eq:0.542}
	\FF(5424/1000)<0<\FF(5425/1000).
\end{equation}
Therefore and because here $\FF(r):=\frac{c_0(-r)}{e^{-6r}}$, for some real $\tr_2\ap-5.42$ and all real $r<0$ we have 
$c_0(r)(e^r-1)=\FF(-r)e^{6r}(e^r-1)\le0\iff r\le\tr_2$. It is also clear that $\tr_2$ can be effectively computed to any precision.  

\medskip
\hrule
\medskip

To complete the proof of \eqref{eq:c_0}, it remains to recall \eqref{eq:c0>0,r>0}. \qed  

\section{Repeated-hammering proof of \texorpdfstring{\eqref{eq:c_1}}{app:c1}
}\label{app:c1}

\subsection{Repeated-hammering proof of \texorpdfstring{\eqref{eq:c_1}}{app:c1,r>0}, case \texorpdfstring{$r>0$}{c1,r>0}
}\label{app:c1,r>0}

To consider the case $r>0$, let  
\begin{equation*}
	\FF(r):=c_1(r), 
\end{equation*}
which is an E-polynomial of E-degree $k=6$, with 
\begin{equation*}
\begin{aligned}
	\p_0(r)&:=2 (2 + r)^2, \\ 
	\p_1(r) &:= r^5+6 r^4+12 r^3+10 r^2-24 r-48, \\
	\p_2(r) &:= -3 r^6-11 r^5-28 r^4-44 r^3-56 r^2+120, \\
	\p_3(r) &:= 7 r^5+36 r^4+60 r^3+84 r^2+80 r-160, \\
	\p_4(r) &:= -r^6+3 r^5-12 r^4-36 r^3-46 r^2-120 r+120, \\
	\p_5(r) &:= -2 r^4+8 r^3+2 r^2+72 r-48, \\
	\p_6(r) &:= 4 r^2-16 r+8  
\end{aligned}
\end{equation*}
of degrees 
\begin{equation*}
	(d_0,\dots,d_6)=(2,5,6,5,6,4,2). 
\end{equation*}
The $k=6$ corresponding rounds of repeated hammering of the current $\FF$ result in 
\begin{equation*}
	\wt{\FF}_{\qc(6)}(r)=515978035200000 (5305 + 956 r + 40 r^2)>0 
\end{equation*}
for $r>0$. 
Also, $\wt{\FF}_{\qc(j-1)}^{(i)}(0)\ge0$ for all $j=1,\dots,6$ and $i=0,\dots,\q(j)-1$. 
These calculations were done in Mathematica notebook DetailsAppendixG1.nb.

By Proposition~\ref{prop:hammer}(i), $\FF(r)>0$ if $r>0$ and hence 
\begin{equation}\label{eq:c1>0,r>0}
\text{$c_1(r)=\FF(r)>0$ if $r>0$.}	
\end{equation}

\subsection{Repeated-hammering proof of \texorpdfstring{\eqref{eq:c_1}}{app:c1,r<0}, case \texorpdfstring{$r<0$}{c1,r<0}
}\label{app:c1,r<0}

To consider the case $r<0$, let  
\begin{equation*}
	\FF(r):=\frac{c_1(-r)}{e^{-6r}}, 
\end{equation*}
which is an E-polynomial of E-degree $k=6$, with 
\begin{equation*}
\begin{aligned}
	\p_0(r)&:=4 (r^2 + 4 r + 2), \\ 
	\p_1(r) &:= -2 r^4-8 r^3+2 r^2-72 r-48, \\
	\p_2(r) &:= -r^6-3 r^5-12 r^4+36 r^3-46 r^2+120 r+120, \\
	\p_3(r) &:= -7 r^5+36 r^4-60 r^3+84 r^2-80 r-160, \\
	\p_4(r) &:= -3 r^6+11 r^5-28 r^4+44 r^3-56 r^2+120, \\
	\p_5(r) &:= -r^5+6 r^4-12 r^3+10 r^2+24 r-48, \\
	\p_6(r) &:= 2 r^2-8 r+8  
\end{aligned}
\end{equation*}
of degrees 
\begin{equation*}
	(d_0,\dots,d_6)=(2,4,6,5,6,5,2). 
\end{equation*}
The $k=6$ corresponding rounds of repeated hammering of the current $\FF$ result in 
\begin{equation*}
	\wt{\FF}_{\qc(6)}(r)=51597803520000 (6137 + 1020 r + 40 r^2)>0 
\end{equation*}
for $r>0$. 
Also, $\wt{\FF}_{\qc(j-1)}^{(i)}(0)\ge0$ for all $j=1,\dots,6$ and $i=0,\dots,\q(j)-1$. 
These calculations were done in Mathematica notebook DetailsAppendixG2.nb.

By Proposition~\ref{prop:hammer}(i), $\FF(r)>0$ if $r>0$ and hence 
\begin{equation*}%\label{eq:c1>0,r>0}
\text{$c_1(r)=\FF(-r)e^{6r}>0$ if $r<0$.}	
\end{equation*}

\medskip
\hrule
\medskip

To complete the proof of \eqref{eq:c_1}, it remains to recall \eqref{eq:c1>0,r>0}. \qed  

\section{Repeated-hammering proof of \texorpdfstring{\eqref{eq:V}}{app:V}
}\label{app:V}

\subsection{Repeated-hammering proof of \texorpdfstring{\eqref{eq:V}}{app:V,r>0}, case \texorpdfstring{$r>0$}{V,r>0}
}\label{app:V,r>0}

To consider the case $r>0$, let  
\begin{equation*}
	\FF(r):=V(r), 
\end{equation*}
which is an E-polynomial of E-degree $k=9$, with 
\begin{equation*}
\begin{aligned}
	\p_0(r)&:=-8 (r^3+6r^2+14r+8), \\ 
	\p_1(r) &:= r^7+4 r^6-48 r^4-64 r^3+96 r^2+768 r+576, \\
	\p_2(r) &:= 4 r^8+30 r^7+136 r^6+272 r^5+528 r^4+560 r^3+640 r^2-2112 r-2304, \\
	\p_3(r) &:= 4 r^9-53 r^7-404 r^6-1024 r^5-1968 r^4-1568 r^3-3136 r^2+2688
   r+5376, \\
	\p_4(r) &:= 12 r^9+40 r^8+100 r^7+432 r^6+1360 r^5+3600 r^4+2240 r^3+6048
   r^2-672 r-8064, \\
	\p_5(r) &:= -4 r^9-32 r^8-153 r^7-292 r^6-640 r^5-3600 r^4-1792 r^3-6272
   r^2-2688 r+8064, \\
	\p_6(r) &:= 4 r^9-12 r^8+62 r^7+200 r^6-80 r^5+1968 r^4+784 r^3+3584 r^2+4032
   r-5376, \\
	\p_7(r) &:= 13 r^7-76 r^6+128 r^5-528 r^4-160 r^3-960 r^2-2688 r+2304, \\
	\p_8(r) &:= -16 r^5+48 r^4+8 r^3+16 r^2+912 r-576, \\
	\p_9(r) &:= 32 r^2-128 r+64  
\end{aligned}
\end{equation*}
of degrees 
\begin{equation*}
	(d_0,\dots,d_9)=(3,7,8,9,9,9,9,7,5,2). 
\end{equation*}
The $k=9$ corresponding rounds of repeated hammering of the current $\FF$ result in 
\begin{multline*}
	\wt{\FF}_{\qc(9)}(r)=201120989295888709478973981333651456000000000 \\ 
	\times(2622764 + 267645 r + 6615 r^2)>0 
\end{multline*}
for $r>0$. 
Also, $\wt{\FF}_{\qc(j-1)}^{(i)}(0)\ge0$ for all $j=1,\dots,9$ and $i=0,\dots,\q(j)-1$. 
These calculations were done in Mathematica notebook DetailsAppendixH1.nb.

By Proposition~\ref{prop:hammer}(i), $\FF(r)>0$ if $r>0$ and hence 
\begin{equation}\label{eq:V>0,r>0}
\text{$V(r)=\FF(r)>0$ if $r>0$.}	
\end{equation}

\subsection{Repeated-hammering proof of \texorpdfstring{\eqref{eq:V}}{app:V,r<0}, case \texorpdfstring{$r<0$}{V,r<0}
}\label{app:V,r<0}

To consider the case $r<0$, let  
\begin{equation*}
	\FF(r):=\frac{V(-r)}{e^{-9r}}, 
\end{equation*}
which is an E-polynomial of E-degree $k=9$, with 
\begin{equation*}
\begin{aligned}
	\p_0(r)&:=32 (r^2+4r+2), \\ 
	\p_1(r) &:= 16 r^5+48 r^4-8 r^3+16 r^2-912 r-576, \\
	\p_2(r) &:= -13 r^7-76 r^6-128 r^5-528 r^4+160 r^3-960 r^2+2688 r+2304, \\
	\p_3(r) &:= -4 r^9-12 r^8-62 r^7+200 r^6+80 r^5+1968 r^4-784 r^3+3584
   r^2-4032 r-5376, \\
	\p_4(r) &:= 4 r^9-32 r^8+153 r^7-292 r^6+640 r^5-3600 r^4+1792 r^3-6272
   r^2+2688 r+8064, \\
	\p_5(r) &:= -12 r^9+40 r^8-100 r^7+432 r^6-1360 r^5+3600 r^4-2240 r^3+6048
   r^2+672 r-8064, \\
	\p_6(r) &:= -4 r^9+53 r^7-404 r^6+1024 r^5-1968 r^4+1568 r^3-3136 r^2-2688
   r+5376, \\
	\p_7(r) &:= 4 r^8-30 r^7+136 r^6-272 r^5+528 r^4-560 r^3+640 r^2+2112 r-2304, \\
	\p_8(r) &:= -r^7+4 r^6-48 r^4+64 r^3+96 r^2-768 r+576, \\
	\p_9(r) &:= 8 r^3-48 r^2+112 r-64  
\end{aligned}
\end{equation*}
of degrees 
\begin{equation*}
	(d_0,\dots,d_9)=(2,5,7,9,9,9,9,8,7,3). 
\end{equation*}
The $k=9$ corresponding rounds of repeated hammering of the current $\FF$ result in 
\begin{multline*}
	\wt{\FF}_{\qc(9)}(r)=8659930163547897612458577100800000000 \\ 
	\times (194015255897 + 27531590310 r + 1270306800 r^2 + 19051200 r^3)>0 
\end{multline*}
for $r>0$. 

Next, $\wt{\FF}_{\qc(j-1)}^{(i)}(0)\le0$ for $j=1,2,3,4$ and all $i=0,\dots,\q(j)-1$, and also for $j=5$ and $i=0,1$; 
$\wt{\FF}_{\qc(j-1)}^{(i)}(0)<0$ for $j=5$ and $i=2$; 
and $\wt{\FF}_{\qc(j-1)}^{(i)}(0)>0$ for $j=5$ and $i=3,\dots,\q(5)-1$ and for $j=6,7,8,9$ and all $i=0,\dots,\q(j)-1$. 
These calculations were done in Mathematica notebook DetailsAppendixH2.nb.

By Proposition~\ref{prop:hammer}(iii), 
$\FF$ is $-+$ on $(0,\infty)$. We also have 
\begin{equation}\label{eq:0.304}
	\FF(3041/1000)<0<\FF(3042/1000).
\end{equation}
Therefore and because here $\FF(r):=\frac{V(-r)}{e^{-9r}}$, for some real $\tr_3\ap-3.04$ and all real $r<0$ we have 
$V(r)=\FF(-r)e^{9r}>0\iff r<\tr_3$. It is also clear that $\tr_3$ can be effectively computed to any precision.

\medskip
\hrule
\medskip

To complete the proof of \eqref{eq:V}, it remains to recall \eqref{eq:V>0,r>0}. \qed

\section{Repeated-hammering proof of the inequalities  \texorpdfstring{\eqref{eq:crat}}{app:V}
}\label{app:crat} 

First here, let 
\begin{equation*}
	\FF(r):=\wt{c\rat}(r), 
\end{equation*}
which is an E-polynomial of E-degree $k=4$, with 
\begin{equation*}
\begin{aligned}
	\p_0(r)&:=-4, \\ 
	\p_1(r) &:= -21 r^2-100 r+16, \\
	\p_2(r) &:= 50 r^3-8 r^2+200 r-24, \\
	\p_3(r) &:= 29 r^2-100 r+16, \\
	\p_4(r) &:= -4  
\end{aligned}
\end{equation*}
of degrees 
\begin{equation*}
	(d_0,\dots,d_4)=(0,2,3,2,0). 
\end{equation*}
The $k=4$ corresponding rounds of repeated hammering of the current $\FF$ result in 
\begin{equation*}
	\wt{\FF}_{\qc(4)}(r)=-6912<0 
\end{equation*}
for $r>0$. 
Also, $\wt{\FF}_{\qc(j-1)}^{(i)}(0)\le0$ for all $j=1,\dots,4$ and $i=0,\dots,\q(j)-1$. 
These calculations were done in Mathematica notebook DetailsAppendixIpart1.nb.

By Proposition~\ref{prop:hammer}(ii), $\FF(r)<0$ if $r>0$ and hence 
\begin{equation}\label{eq:wtcrat<0,r>0}
\text{$\wt{c\rat}(r)=\FF(r)<0$ if $r>0$.}	
\end{equation}

\medskip
\hrule
\medskip

Now let 
\begin{equation*}
	\FF(r):=c\rat(r), 
\end{equation*}
which is an E-polynomial of E-degree $k=2$, with 
\begin{equation*}
\begin{aligned}
	\p_0(r)&:=-r-4, \\ 
	\p_1(r) &:= 2 r^2+8, \\
	\p_2(r) &:= r-4 
\end{aligned}
\end{equation*}
of degrees 
\begin{equation*}
	(d_0,\dots,d_2)=(1,2,1). 
\end{equation*}
The $k=2$ corresponding rounds of repeated hammering of the current $\FF$ result in 
\begin{equation*}
	\wt{\FF}_{\qc(2)}(r)=4r>0 
\end{equation*}
for $r>0$. 
Also, $\wt{\FF}_{\qc(j-1)}^{(i)}(0)=0$ for $j=1,2$ and $i=0,\dots,\q(j)-1$. 
These calculations were done in Mathematica notebook DetailsAppendixIpart2.nb.

By Proposition~\ref{prop:hammer}(i), $\FF(r)>0$ if $r>0$ and hence 
\begin{equation}\label{eq:crat>0,r>0}
\text{$c\rat(r)=\FF(r)>0$ if $r>0$.}	
\end{equation}

\medskip
\hrule
\medskip

Now inequalities \eqref{eq:crat} follow immediately from \eqref{eq:crat>0,r>0} and \eqref{eq:wtcrat<0,r>0}. \qed 

\section{Repeated-hammering proof of \texorpdfstring{\eqref{eq:Dh'B}}{app:h'B}
}\label{app:h'B} 

Let here   
\begin{equation*}
	\FF(r):=\wt{\h'\B}(r), 
\end{equation*}
which is an E-polynomial of E-degree $k=4$, with 
\begin{equation*}
\begin{aligned}
	\p_0(r)&:=-6, \\ 
	\p_1(r) &:= 11 r^2+10 r+24, \\
	\p_2(r) &:= 5 r^3-2 r^2-20 r-36, \\
	\p_3(r) &:= 5 r^3-9 r^2+10 r+24, \\
	\p_4(r) &:= -6  
\end{aligned}
\end{equation*}
of degrees 
\begin{equation*}
	(d_0,\dots,d_4)=(0,2,3,3,0). 
\end{equation*}
The $k=4$ corresponding rounds of repeated hammering of the current $\FF$ result in 
\begin{equation*}
	\wt{\FF}_{\qc(4)}(r)=-10368<0 
\end{equation*}
for $r>0$. 

Next, $\wt{\FF}_{\qc(j-1)}^{(i)}(0)=0$ for $j=1,2$ and all $i=0,\dots,\q(j)-1$, 
\begin{equation*}
\begin{aligned}
	\Big(\sign\wt{\FF}_{\qc(3-1)}^{(i)}(0)\colon  i=0,1,2,3\Big)
	&=(0, 0, -1, 1), \\ 
	\Big(\sign\wt{\FF}_{\qc(4-1)}^{(i)}(0)\colon  i=0,1,2,3\Big)
	&=(-1,-1, -1, -1).	
\end{aligned}
\end{equation*}
These calculations were done in Mathematica notebook DetailsAppendixJ.nb, as well as the calculations for statements \eqref{eq:wtFF_qc(3-1)^{(3)} signs} and \eqref{eq:maps into} below. 

By Proposition~\ref{prop:hammer}(iv), $\wt{\FF}_{\qc(3-1)}^{(3)}$ is $+-$ on $(0,\infty)$, whence 
\begin{equation}\label{eq:wtFFqc(3-1)^{(2)}}
	\text{$\wt{\FF}_{\qc(3-1)}^{(2)}$ is $\u\d$ on $(0,\infty)$, with  $\wt{\FF}_{\qc(3-1)}^{(2)}(0)<0$ and $\wt{\FF}_{\qc(3-1)}^{(2)}(\infty-)=-\infty<0$.}
\end{equation}
We also have 
\begin{equation}\label{eq:wtFF_qc(3-1)^{(3)} signs}
	\wt{\FF}_{\qc(3-1)}^{(3)}\Big(\frac{177}{1000}\Big)
	>0>\wt{\FF}_{\qc(3-1)}^{(3)}\Big(\frac{178}{1000}\Big), 
\end{equation}
so that the only switch in sign of the function $\wt{\FF}_{\qc(3-1)}^{(3)}=\big(\wt{\FF}_{\qc(3-1)}^{(2)}\big)'$ on $(0,\infty)$, from $+$ to $-$, occurs at some point in the interval $[\frac{177}{1000},\frac{178}{1000}]$. Using the Mathematica command Interval, we see that %the function 
\begin{equation}\label{eq:maps into}
	\text{the function $\wt{\FF}_{\qc(3-1)}^{(2)}$ maps the interval $[\tfrac{177}{1000},\tfrac{178}{1000}]$ into the interval $[-62,-46]\subset(-\infty,0)$.}
\end{equation}
%
%$\wt{\FF}_{\qc(3-1)}^{(2)}$ maps the interval $[\frac{177}{1000},\frac{178}{1000}]$ into the interval $[-62,-46]\subset(-\infty,0)$. 
Recalling here \eqref{eq:wtFFqc(3-1)^{(2)}}, we see that $\wt{\FF}_{\qc(3-1)}^{(2)}<0$ on $(0,\infty)$. 
So, by the second line of \eqref{eq:De impl} -- applied repeatedly, backwards to $\wt{\FF}_5=\wt{\FF}_4^{(1)}=\wt{\FF}_{\qc(3-1)}^{(1)},\wt{\FF}_4,\wt{\FF}_3,\wt{\FF}_2,\wt{\FF}_1,\wt{\FF}_0=\FF$ 
%instead of $\FF$ in that line and then to $\wt{\FF}_{\qc(3-1)}^{(0)}$ instead of $\FF$ in that line 
-- we have 
$\wt{\h'\B}=\FF<0$ on $(0,\infty)$. 
\qed

\newpage
 
\bibliographystyle{abbrv}
%\bibliographystyle{ims}
%\bibliography{are.citations}
%\bibliography{citat}

%\bibliography{citations}

\bibliography%{P:/pCloudSync/mtu_pCloud_02-02-17/bib_files/citations01-09-20}
{C:/Users/ipinelis/Documents/pCloudSync/mtu_pCloud_02-02-17/bib_files/citations10-06-26}

\def\cprime{$'$} \def\polhk#1{\setbox0=\hbox{#1}{\ooalign{\hidewidth
  \lower1.5ex\hbox{`}\hidewidth\crcr\unhbox0}}}
  \def\polhk#1{\setbox0=\hbox{#1}{\ooalign{\hidewidth
  \lower1.5ex\hbox{`}\hidewidth\crcr\unhbox0}}}
  \def\polhk#1{\setbox0=\hbox{#1}{\ooalign{\hidewidth
  \lower1.5ex\hbox{`}\hidewidth\crcr\unhbox0}}} \def\cprime{$'$}
  \def\polhk#1{\setbox0=\hbox{#1}{\ooalign{\hidewidth
  \lower1.5ex\hbox{`}\hidewidth\crcr\unhbox0}}} \def\cprime{$'$}
  \def\polhk#1{\setbox0=\hbox{#1}{\ooalign{\hidewidth
  \lower1.5ex\hbox{`}\hidewidth\crcr\unhbox0}}} \def\cprime{$'$}
  \def\cprime{$'$}
\begin{thebibliography}{10}

\bibitem{barbagallo_etal}
M.~L. Barbagallo, G.~Jeronimo, and J.~Sabia.
\newblock Decision problem for a class of univariate {P}faffian functions.
\newblock {\em Appl. Algebra Engrg. Comm. Comput.}, 35(2):207--232, 2024.

\bibitem{billingsley99}
P.~Billingsley.
\newblock {\em Convergence of probability measures}.
\newblock Wiley Series in Probability and Statistics: Probability and
  Statistics. John Wiley \& Sons, Inc., New York, second edition, 1999.
\newblock A Wiley-Interscience Publication.

\bibitem{borell74}
C.~Borell.
\newblock Convex measures on locally convex spaces.
\newblock {\em Ark. Mat.}, 12:239--252, 1974.

\bibitem{borell83}
C.~Borell.
\newblock Convexity of measures in certain convex cones in vector space
  {$\sigma $}-algebras.
\newblock {\em Math. Scand.}, 53(1):125--144, 1983.

\bibitem{knuth96}
R.~M. Corless, G.~H. Gonnet, D.~E.~G. Hare, D.~J. Jeffrey, and D.~E. Knuth.
\newblock On the {L}ambert {$W$} function.
\newblock {\em Adv. Comput. Math.}, 5(4):329--359, 1996.

\bibitem{cule-samworth}
M.~Cule and R.~Samworth.
\newblock Theoretical properties of the log-concave maximum likelihood
  estimator of a multidimensional density.
\newblock {\em Electron. J. Stat.}, 4:254--270, 2010.

\bibitem{devroye}
L.~Devroye.
\newblock A simple algorithm for generating random variates with a log-concave
  density.
\newblock {\em Computing}, 33(3-4):247--257, 1984.

\bibitem{Fattorini_1999}
H.~O. Fattorini.
\newblock {\em Infinite Dimensional Optimization and Control Theory}.
\newblock Encyclopedia of Mathematics and its Applications. Cambridge
  University Press, 1999.

\bibitem{feinberg_etal}
E.~A. Feinberg, P.~O. Kasyanov, and N.~V. Zadoianchuk.
\newblock Fatou's lemma for weakly converging probabilities.
\newblock {\em Theory Probab. Appl.}, 58(4):683--689, 2014.

\bibitem{folland}
G.~B. Folland.
\newblock {\em Real analysis}.
\newblock Pure and Applied Mathematics (New York). John Wiley \& Sons Inc., New
  York, 1984.
\newblock Modern techniques and their applications, A Wiley-Interscience
  Publication.

\bibitem{frad-guel}
M.~Fradelizi and O.~Gu\'edon.
\newblock A generalized localization theorem and geometric inequalities for
  convex bodies.
\newblock {\em Adv. Math.}, 204(2):509--529, 2006.

\bibitem{johnson-rogers}
N.~L. Johnson and C.~A. Rogers.
\newblock The moment problem for unimodal distributions.
\newblock {\em Ann. Math. Statistics}, 22:433--439, 1951.

\bibitem{karlin_etal61}
S.~Karlin, F.~Proschan, and R.~E. Barlow.
\newblock Moment inequalities of {P}\'olya frequency functions.
\newblock {\em Pacific J. Math.}, 11:1023--1033, 1961.

\bibitem{keilson72}
J.~Keilson.
\newblock {A Threshold for Log-Concavity for Probability Generating Functions
  and Associated Moment Inequalities}.
\newblock {\em The Annals of Mathematical Statistics}, 43(5):1702 -- 1708,
  1972.

\bibitem{lovasz-vempala}
L.~Lov\'asz and S.~Vempala.
\newblock The geometry of logconcave functions and sampling algorithms.
\newblock {\em Random Structures Algorithms}, 30(3):307--358, 2007.

\bibitem{phelps}
R.~R. Phelps.
\newblock {\em Lectures on {C}hoquet's theorem}.
\newblock D. Van Nostrand Co., Inc., Princeton, N.J.-Toronto, Ont.-London,
  1966.

\bibitem{pin06}
I.~Pinelis.
\newblock On l'{H}ospital-type rules for monotonicity.
\newblock {\em {JIPAM}. J. Inequal. Pure Appl. Math.}, 7(2):Article 40, 19 pp.
  (electronic), 2006.
\newblock \url{https://emis.de/ft/14435}.

\bibitem{pourciau}
B.~H. Pourciau.
\newblock Modern multiplier rules.
\newblock {\em Amer. Math. Monthly}, 87(6):433--452, 1980.

\bibitem{prekopa71}
A.~Pr{\'e}kopa.
\newblock Logarithmic concave measures with application to stochastic
  programming.
\newblock {\em Acta Sci. Math. (Szeged)}, 32:301--316, 1971.

\bibitem{prekopa73}
A.~Pr\'ekopa.
\newblock On logarithmic concave measures and functions.
\newblock {\em Acta Sci. Math. (Szeged)}, 34:335--343, 1973.

\bibitem{rudin}
W.~Rudin.
\newblock {\em Functional analysis}.
\newblock International Series in Pure and Applied Mathematics. McGraw-Hill,
  Inc., New York, second edition, 1991.

\bibitem{wellner-logconcav}
A.~Saumard and J.~A. Wellner.
\newblock Log-concavity and strong log-concavity: a review.
\newblock {\em Stat. Surv.}, 8:45--114, 2014.

\bibitem{schoenberg51}
I.~J. Schoenberg.
\newblock On {P}\'olya frequency functions. {I}. {T}he totally positive
  functions and their {L}aplace transforms.
\newblock {\em J. Analyse Math.}, 1:331--374, 1951.

\bibitem{sharma-etal}
R.~Sharma, R.~Bhandari, and R.~Saini.
\newblock On lower bounds for variance and moments of unimodal distributions.
\newblock {\em Calcutta Statist. Assoc. Bull.}, 67(265-266):1--14, 2015.

\end{thebibliography}

\begin{supplement}
\stitle{%Title of Supplement A.
Supplement A: A Sharp Expectation Bound for
Isotropic Log-Concave Random Variables}\label{suppl}
\sdescription{%Short description of Supplement A.
Exposition by ChatGPT of an answer by Bogdan Grechuk on MathOverflow at %\\  
\url{https://mathoverflow.net/a/513788/36721}. 
% \url{https://mathoverflow.net/questions/512106/maximize-expectation-of-a-function-among-log-concave-random-variables/513788\#513788}. 
 The answer relied on a Lean verification \url{https://github.com/Mirkes/OneShotSeparabilityTheorem/blob/main/LogConcaveFamily6Theorem2_WithExistence_Live.lean}, based on which ChatGPT provided an exposition of the corresponding proof.}
\end{supplement} 

\begin{supplement}
\stitle{%Title of Supplement A.
Supplement B: Mathematica Files}\label{mathematica}
\sdescription{%Short description of Supplement B.
%Zip file MathematicaFiles.zip containing  \break 
Mathematica notebooks and Mathematica package \texttt{hammer.wl} providing details of symbolic and exact numerical calculations in proofs in the paper. 
These Mathematica files are available as ancillary files at \url{https://arxiv.org/src/2610.10289/anc}.
Before using these files, %Mathematica notebooks, %extract 
put all of them %in this zip file 
into a folder/directory of your own. 
}
\end{supplement}
\bigskip

\begin{acks}

The work on this paper has been aided by ChatGPT in a number of ways. Specifically, ChatGPT provided the following: 

\begin{itemize}
	\item an exposition of the proof corresponding to the Lean verification referred to in the MathOverflow answer by Bogdan Grechuk, as was noted at the end of the Section~\ref{discussion};    
	\item help reading some places in \cite{frad-guel}; 
	\item pointer to Milman's theorem and reference \cite[Theorem~3.25]{rudin} to it; 
	\item parametrization of the solutions $(a,b)\in(0,\infty)$ of the equation $a e^{-a}=b e^{-b}$;  
	\item pointers to Lemmas~5.4, 5.5(a), and 5.7 of \cite{lovasz-vempala}; 
	\item a more streamlined proof of Proposition~\ref{prop:L_2}; 
	\item Lemma~\ref{lem:perturb}, used for interchanging a derivative with an integral; 
		\item Lemma~\ref{lem:k>1}; 	
		\item references to \cite{barbagallo_etal} and \cite[Theorem~1.1]{feinberg_etal}; 
		\item pointers to \cite[Theorem~2]{johnson-rogers} and  \cite[Theorem~2.2]{sharma-etal}; 
		\item verification and referee-style editorial suggestions.
\end{itemize}
%
%%Proposition~2.2 \verb!{prop:L_2}! : more streamlined proof  
%
%%Lemma {lem:perturb}, used for interchanging a derivative with an integral 
%
%%Refs. to \cite{barbagallo_etal}, \cite[Theorem~1.1]{feinberg_etal}
%
%verification and editing suggestions, referee-report style
%
%%\cite[Theorem~2]{johnson-rogers} and  \cite[Theorem~2.2]{sharma-etal} 
%
\end{acks}

%%%%%%%%%%%%%%%%%%%%%%%%%%%%%%%%%%%%%%%%%%%%%%%%%%%%%%%%%%%%%%%%%%%
%%                                                               %%
%% You have reached the end of your document.                    %%
%%                                                               %%
%%%%%%%%%%%%%%%%%%%%%%%%%%%%%%%%%%%%%%%%%%%%%%%%%%%%%%%%%%%%%%%%%%%

\end{document}